%

\input ./style/arxiv-general.cfg
\documentclass[aap,MSNbibl,seceqn,dvips]{arximspdf}
\makeatletter
   \@ifpackageloaded{graphicx}{}{\usepackage{graphicx}}
\makeatother

%

\doi{10.1214/14-AAP1051} 
\volume{25}
\issue{5}
\pubyear{2015}
\firstpage{2383}
\lastpage{2415}
\docsubty{FLA}

\makeatletter

\newcommand{\rrvert}{\vert}
\newcommand{\llvert}{\vert}
\newcommand{\eqref}[1]{(\ref{#1})}

\def\ii{\mathtt{i}}



\newcommand{\te}{\widehat{e}}

\newtheorem{Thm}{Theorem}[section]
\newtheorem{Cor}[Thm]{Corollary}
\newtheorem{Lemma}[Thm]{Lemma}

\newtheorem{Lemm}{Lemma}[section]
\newtheorem{Prop}[Thm]{Proposition}
\newtheorem{Propp}[Lemm]{Proposition}

\newproclaim{Def}[Thm]{Definition}
\newproclaim{Ex}[Thm]{Example}
\newproclaim{As}[Thm]{Assumption}
\newproclaim{Rem}[Thm]{Remark}
\newproclaim{Remm}[Lemm]{Remark}
\newproclaim{Rems}{Remarks}


\def\mbb{\mathbb}
\def\unl{\underline}

\renewcommand{\te}[1]{\mathrm{e}^{#1}}
\def\WH{\widehat}

\def\a{\alpha}
\def\th{\theta}
\def\e{\varepsilon}
\def\t{\tau}

\def\td{\mathrm{d}}


\makeatother

\begin{document}
\begin{frontmatter}


\title{Explicit solution of an inverse first-passage time problem for
L\'{e}vy processes and counterparty credit risk}
\runtitle{Inverse first-passage time problem for
L\'{e}vy processes}

\begin{aug}
\author[A]{\fnms{M.~H.~A.}~\snm{Davis}\ead[label=e1]{mark.davis@imperial.ac.uk}}
\and
\author[A]{\fnms{M.~R.}~\snm{Pistorius}\corref{}\ead[label=e2]{m.pistorius@imperial.ac.uk}}
\runauthor{M.~H.~A. Davis and M.~R. Pistorius}
\affiliation{Imperial College London}
\address[A]{Department of Mathematics\\
Imperial College London\\
South Kensington Campus\\
London SW7 2AZ\\
United Kingdom\\
\printead{e1}\\
\phantom{E-mail:\ }\printead*{e2}}
\end{aug}

\received{\smonth{6} \syear{2013}}
\revised{\smonth{3} \syear{2014}}


\begin{abstract}
For a given Markov process $X$ and survival function
$\overline H$ on $\mbb R^+$,
the \emph{inverse first-passage time problem} (IFPT) is to find a
barrier function $b\dvtx\mbb R^+\to[-\infty,+\infty]$
such that the survival function of the first-passage time $\tau_b=\inf
\{
t\ge0\dvtx X(t) < b(t)\}$ is
given by $\overline H$. In this paper, we consider a version of the IFPT
problem where the barrier is
\emph{fixed at zero} and the problem is to find an initial distribution
$\mu$ and a time-change $I$ such that
for the time-changed process $X\circ I$ the IFPT problem is solved by a
constant barrier at the level zero.
For any L\'{e}vy process $X$ satisfying an exponential moment condition,
we derive the solution of this problem in terms of {\em$\lambda
$-invariant distributions} of the process $X$ killed at the epoch of
first entrance into the negative half-axis. We provide an explicit
characterization of such distributions,
which is a result of independent interest. For a given multi-variate
survival function $\overline H$ of generalized frailty type, we construct
subsequently an explicit solution to the corresponding IFPT with the
barrier level fixed at zero.
We apply these results to the valuation of financial contracts that are
subject to counterparty credit risk.
\end{abstract}
%
%
\begin{keyword}[class=AMS]
\kwd[Primary ]{60J75}
\kwd{91G40}
\kwd[; secondary ]{91G80}
\end{keyword}

\begin{keyword}
\kwd{Inverse first-passage problem}
\kwd{L\'{e}vy process}
\kwd{quasi-invariant distribution}
\kwd{credit risk}
\kwd{counterparty risk}
\kwd{multi-variate first-passage times}
\end{keyword}
%
\end{frontmatter}

\section{Introduction}\label{sec:intro}
Financial models incorporating the idea that a firm defaults on its
debt when the value of the debt exceeds the value of the
firm were originally introduced by Merton \cite{Mer74}. Black and
Cox~\cite{BC} extended the Merton model
by modelling the time of default as the first time that the value of
the firm less the value of its debt becomes negative. Because ``firm
value'' cannot be directly measured, later contributors such as
Longstaff and Schwartz~\cite{LonSch95} and Hull and White \cite{HW}
have moved to stylized models in which default occurs when some process
$Y(t)$---interpreted as ``distance to default''---crosses a given, generally
time-varying, barrier $b(t)$. The risk-neutral
distribution of the default time can be inferred from the firm's credit
default swap spreads, and Hull and White~\cite{HW} provide a
numerical algorithm to determine $b(t)$ such that the first hitting
time distribution $H$ is equal to this market-implied
default time distribution when $Y(t)$ is Brownian motion.

As we will show, these calculations are greatly simplified if, instead
of starting at a fixed point $Y(0)=x>0$ and calibrating the barrier
$b(t)$ we fix the barrier at $b(t)\equiv0$ and start $Y$ at a random
point $Y(0)=Y_0$, where $Y_0$ has a distribution function $F$ on
$\mathbb{R}^+$, to be
chosen. If we combine this with a deterministic time change then it
turns out that essentially any continuous
distribution $H$ can be realized in this way, often with closed-form
expressions for $F$.

In precise terms, the \emph{inverse first-passage time} (IFPT) problem
may be described as follows. Let $(Y,P^\mu)$ be a real-valued Markov
process with
c\`adl\`ag\footnote{c\`adl\`ag${} = {}$right-continuous with left-limits.} paths
that has initial distribution $\mu$ on $\mbb R^+\setminus\{0\}$
[i.e., $P^\mu(Y_0\in\td x) = \mu(\td x)$]. Given a CDF $H$
on $\mbb R^+$,
the IFPT for the process $(Y,P^\mu)$ is to find a barrier function
$b\dvtx\mbb R^+\to[-\infty,+\infty]$ such that the first-passage time
$\tau
^Y_b$ of the process $Y$ below the barrier $b$ has CDF $H$:
%
\begin{eqnarray}
\label{eq:PMt}  P^\mu\bigl(\tau^Y_b\leq t
\bigr)& =& H(t),\qquad t\in\mbb R^+,\mbox{ with }
\nonumber
\\[-8pt]
\\[-8pt]
\nonumber
\tau^Y_b& =& \inf\bigl\{t\in\mbb R^+\dvtx
Y_t \in \bigl(-\infty, b(t) \bigr)\bigr\}.
\end{eqnarray}
Recently, there has been a renewed interest in the IFPT problem, in
good part motivated by the above questions of credit risk modelling.
Chen {et al.}~\cite{Chadam} prove existence and uniqueness of the
IFPT of an arbitrary continuous CDF on $\mbb R^+$ for a diffusion with
smooth bounded coefficients and strictly positive volatility function. In
\cite{AZ,HT,HW,SongZipkin,ZucSac}, a number of methods have been developed
to compute this boundary, which is in general nonlinear. Zucca and
Sacerdote~\cite{ZucSac} analyse a Monte Carlo
approximation method and a method based on the discretization of the
Volterra integral equation satisfied
by the boundary, which was derived in Peskir~\cite{Peskir2}, while
related integral equations are studied in
Jaimungal {et al.}~\cite{Seb}. Avellaneda and Zhu~\cite{AZ} derive
a free boundary problem for the density
of a diffusion killed upon first hitting the boundary, where the free
boundary is the solution to the IFPT, and
Cheng {et al.}~\cite{Chadam2} established the existence and
uniqueness of a solution to this
free-boundary problem.
A related ``smoothed'' version of the IFPT problem is considered in
Ettinger {et al.}~\cite{EEH}:
for any prescribed life-time it is shown that there exists a unique
continuously differentiable boundary
for which a standard Brownian motion killed at a rate
that is a given function of this boundary has the prescribed life-time.

In this paper, we consider a related inverse problem where the barrier
is \emph{fixed} to be equal to zero,
and the problem is to identify in a given family a stochastic process
whose first-passage time below the level zero has the given probability
distribution.
For a given Markov process $X$, the class of stochastic processes that
we consider consists
of the collection $(P^\mu, X\circ I)$ that is obtained by time-changing
$X$ by a continuous increasing function $I$
and by varying the initial distribution $\mu$ of $X$
over the set of all probability measures on the positive half-line. Here,
$I\dvtx\mbb R^+\to[0,\infty]$ is a function that is continuous
and increasing on its domain, that is, at all $t$ for which
$I(t)$ is finite, and
the time-changed process $X\circ I =\{(X\circ I)(t), t\in\mbb R^+\}$ is
defined by
$(X\circ I)(t) = X(I(t))$ if $I(t)$ is finite, and by $\limsup_{t\to
\infty}X(t)$ otherwise.

\begin{Def} For a continuous CDF $H$ on $\mbb R^+$, the \emph
{randomized and time-changed inverse first-passage problem}
(RIFPT) is to find a
probability measure $\mu$ on $(\mbb R^+, \mathcal B(\mbb R^+))$ and an
increasing continuous function $I\dvtx\mbb R^+\to[0,\infty]$
such that for the time-changed process $Y = X\circ I$
the first-passage time into the negative half-line $(-\infty,0)$ has
CDF $H$:
%
\begin{eqnarray}
\label{eq:TFPT0}  P^{\mu}\bigl(\tau^Y_0 \leq t
\bigr) &=& H(t),\qquad t\in\mbb R^+,\mbox{ with}
\nonumber
\\[-8pt]
\\[-8pt]
\nonumber
 \tau^Y_0 &= &\inf\bigl\{t\in\mbb R^+\dvtx
Y_t \in(-\infty,0) \bigr\} .
\end{eqnarray}
\end{Def}

The fact that the boundary is constant and known is helpful for
practical implementation of the model, for example, in subsequent
counterparty risk valuation computations and for the matching of model
and market prices.

In this paper, we concentrate on the case where $X$ is a L\'evy process
satisfying an exponential moment condition. The class of L\'{e}vy
processes has been extensively deployed in financial modelling; see
the monograph Cont and Tankov~\cite{CT}. For the general theory of L\'
{e}vy processes, we refer to the monographs Applebaum \cite{Apple},
Bertoin~\cite{Bertoin}, Kyprianou~\cite{Kyprianou}
and Sato~\cite{Sato}.

The key step is to
determine, for some $\lambda\in\mathbb{R}_+$, a $\lambda$-invariant
distribution for the process $X$ killed at the first hitting time of 0,
which is a result of independent interest; see Definition~\ref
{def:linv} below. If $\mu$ is $\lambda$-invariant then under $P^{\mu}$,
the first-passage time $\tau^X_0$ is exponentially distributed with
parameter $\lambda$, so $(\mu,I)$ with $I(t)=t$ solves the RIFPT
problem when $H$ is $\mathrm{Exp}(\lambda)$. The solution for other
continuous distribution functions $H$ is then obtained by an obvious
deterministic time change.

The paper is structured as follows. In Section~\ref{sec:prob}, we
formulate the problem and state the main results for the RIFPT problem,
Theorems \ref{thm:I} and \ref{thm:II}. The proof of Theorem~\ref{thm:I}
is also given, together with an illustrative example where the L\'evy
process is drifting Brownian motion. In Section~\ref{sec:md-rifpt}, a
multi-dimensional version of the RIFPT theorem is stated for a specific
class of multivariate default-time distributions; its proof follows
quite easily given the results of Section~\ref{sec:prob}. The proof of
Theorem~\ref{thm:II}, which is presented in Section~\ref{sec:pfthmii},
involves the relationship between first-passage times and the so-called
Wiener--Hopf factors; these matters are discussed in Section~\ref
{sec:prel0}. In Section~\ref{sec:pthecase}, the results of Theorem~\ref
{thm:II} are illustrated explicitly for the special case of \emph
{mixed-exponential L\'evy processes} (a self-contained proof of the
quasi-invariance in this case, based on residue calculus, is given in the
\hyperref[ssec:resd]{Appendix}).
The concluding Section~\ref{sec:crv} demonstrates the application
of our results to a problem of counterparty risk valuation.

\section{IFPT problem formulation and main results}\label{sec:prob}
Let $(\Omega, \mathcal F, \mathbf F, P)$ be a filtered probability
space with
completed filtration $\mathbf F=\{\mathcal F_t\}_{t\ge0}$, and $X$
be an $\mathbf
F$-L\'evy process, that is, an $\mathbf F$-adapted stochastic process with
c\`adl\`ag paths that has stationary independent increments, with
$X_0=0$ and the property that for each $s\leq t<u$ the increment
$X_u-X_t$ is independent of $\mathcal F_s$.
Let $\{P_x, x\in\mbb R\}$ be the
family of probability measures corresponding to shifts of the
L\'{e}vy process $X$ by $x$ and, more generally, denote by $P^\mu$ the
family of measures with initial distribution (the distribution of
$X_0$) equal to $\mu$; thus $P_x=P^{\delta_x}$ where $\delta_x$ is the
Dirac measure at $x$ and $P=P_0$. To avoid degeneracies, we exclude
throughout the case that $X$ has monotone paths. As standing notation,
we denote
$X_*(t)=\inf_{s\leq t}X(s)$ and $X^*(t)=\sup_{s\leq t}X(s)$.
Below we describe a solution to the RIFPT problem under the following
conditions.

\begin{As}\label{as:smooth} The Gaussian coefficient $\sigma^2$ and
L\'{e}vy measure $\nu$ of $X$ satisfy
at least one of the following conditions:
\begin{eqnarray*}
\mbox{(i)}&&\quad \sigma^2>0,\qquad \mbox{(ii)}\quad \nu(-1,1) = +\infty,\qquad
\\
\mbox{(iii)}&&\quad \mbox{$\nu$ has no atoms and $S^\nu\cap(-\infty ,0)\neq
\varnothing$,}
\end{eqnarray*}
where $S^\nu$ denotes the support of $\nu$.
\end{As}

When only Assumptions \ref{as:smooth}(iii) holds, the process $X$ is of
the form
$X_t = \mathtt d t + \sum_{s\in(0,t]}\Delta X_s$,
where $\Delta X_s = X_s - X_{s^-}$ denotes the jump-size of $X$ at time $s$,
for some constant $\mathtt d$, which is called the infinitesimal drift
of $X$.

The first observation is that for any initial distribution there exists
a unique time-change that solves the RIFPT problem.
For a given probability measure $\mu$ on the positive real line,
define the function $I_\mu\dvtx\mbb R^+\to[0,\infty]$ by
%
\begin{eqnarray}
\label{eq:Imu}  I_\mu(t) &=& \overline F_\mu^{-1}
\bigl(\overline H(t)\bigr), \qquad t\in\mbb R^+,\mbox{ with}
\\
 \overline F_\mu^{-1}(x) &=& \inf\bigl\{t\in
\mbb R^+\dvtx \overline F_\mu (t)<x\bigr\},
\end{eqnarray}
where $\overline H = 1 - H$ and $\overline F_\mu$ denote
the survival functions corresponding to the CDF $H$ and to the CDF
of the first-passage time $\tau^X_0$ of $X$ into the
negative half-line $(-\infty,0)$ under the probability measure $P^\mu$.
Here and throughout this paper, we use the convention $\inf\varnothing
=+\infty$.

\begin{Thm}\label{thm:I}
Let $H$ be a continuous CDF on $\mbb R^{+}$,
and let $\mu$ be a probability measure on
$(\mbb R^{+},\mathcal B(\mbb R^{+}))$ with $\mu(\{0\})=0$.
Assume Assumption~\ref{as:smooth} holds and that $\mu$ has no atoms if
only Assumption~\ref{as:smooth}\textup{(iii)} is satisfied. Then the function
$I_\mu$ defined in \eqref{eq:Imu}
is the unique time-change such that $(\mu, I_\mu)$ is a solution of the
RIFPT problem.
\end{Thm}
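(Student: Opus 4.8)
The plan is to reduce the RIFPT to a purely deterministic statement about generalized inverses, the only genuinely probabilistic ingredient being the regularity properties of the first-passage time $\tau^X_0$ under $P^\mu$ supplied by Assumption~\ref{as:smooth}. First I would record the pathwise link between the first-passage times of $X$ and of a time-changed copy. Let $I:\mbb R^+\to[0,\infty]$ be increasing and continuous with $I(0)=0$ and set $Y=X\circ I$. Continuity and monotonicity of $I$ give $I([0,t])=[0,I(t)]$ whenever $I(t)<\infty$, hence $\inf_{s\le t}Y_s=\inf_{u\le I(t)}X_u=X_*(I(t))$; since $Y$ is again right-continuous, $\{\tau^Y_0\le t\}$ and $\{X_*(I(t))<0\}$ differ only within $\{\tau^Y_0=t\}$, and similarly $\{X_*(u)<0\}$ and $\{\tau^X_0\le u\}$ differ only within $\{\tau^X_0=u\}$. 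Using that a L\'evy process a.s.\ has no jump at a fixed time and that $\tau^X_0$ has no atom under $P^\mu$ (see the next paragraph), this gives
\[
P^\mu(\tau^Y_0\le t)=P^\mu\bigl(\tau^X_0\le I(t)\bigr)=1-\ovl F_\mu\bigl(I(t)\bigr)
\]
for every $t$ with $I(t)<\infty$; the remaining values of $t$, where $I(t)=\infty$ and $Y_t=\limsup_{u\to\infty}X_u$, are handled separately using $I(s)\uparrow I(t)$ as $s\uparrow t$. Consequently $(\mu,I)$ solves the RIFPT \emph{if and only if} $\ovl F_\mu(I(t))=\ovl H(t)$ for all $t\in\mbb R^+$.

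The heart of the matter, and the step I expect to be the main obstacle, is to show that under Assumption~\ref{as:smooth} (with $\mu$ atomless when only \ref{as:smooth}(iii) holds) the survival function $\ovl F_\mu$ is continuous, satisfies $\ovl F_\mu(0)=1$, and is strictly decreasing on $[0,\zeta_\mu)$, where $\zeta_\mu:=\inf\{t:\ovl F_\mu(t)=0\}\in(0,\infty]$. The relevant facts about L\'evy processes are classical (see e.g.\ Bertoin~\cite{Bertoin} or Kyprianou~\cite{Kyprianou}): each of \ref{as:smooth}(i)--(iii), combined with the exclusion of monotone paths, makes $X$ enter $(-\infty,0)$ with positive probability in arbitrarily short time from the relevant starting points, which forces $\ovl F_\mu$ to leave the value $1$ immediately and, more generally, yields $P^\mu(s<\tau^X_0<t)>0$ for $0\le s<t<\zeta_\mu$; and $\tau^X_0$ has no atoms under $P^\mu$. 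In cases (i) and (ii) the Gaussian component, respectively the infinite small-jump activity, already makes $\tau^X_0$ atomless from a fixed starting point; in case (iii), where $X$ is a drift $\mathtt d$ plus a compound Poisson process, $\tau^X_0$ started from a fixed point $x>0$ carries an atom at $x/|\mathtt d|$ when $\mathtt d<0$ (the process reaching $0$ along its drift before the first jump occurs), and it is precisely the hypothesis that $\mu$ is atomless that removes this atom under $P^\mu$. I would check (i)--(iii) case by case.

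Granted this, the restriction of $\ovl F_\mu$ to $[0,\zeta_\mu)$ is a continuous strictly decreasing bijection onto $(\ell_\mu,1]$, where $\ell_\mu:=\lim_{u\uparrow\zeta_\mu}\ovl F_\mu(u)$, so its generalized inverse $\ovl F_\mu^{-1}$ (which equals $+\infty$ on $[0,\ell_\mu]$ by the convention $\inf\emptyset=+\infty$) is continuous and non-increasing, and satisfies $\ovl F_\mu(\ovl F_\mu^{-1}(x))=x$ wherever it is finite. Hence $I_\mu=\ovl F_\mu^{-1}\circ\ovl H$ is continuous and non-decreasing (the composition of two non-increasing maps); since $\ovl H(0)=1$ (as $H$ is a continuous CDF on $\mbb R^+$), one has $I_\mu(0)=\ovl F_\mu^{-1}(1)=0$; and $\ovl F_\mu(I_\mu(t))=\ovl H(t)$ for all $t$, so by the equivalence of the first paragraph $(\mu,I_\mu)$ solves the RIFPT. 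For uniqueness, any solution $(\mu,J)$ must have $J(0)=0$ (else $\tau^{X\circ J}_0=0$ with positive probability — as $X_0>0$ $P^\mu$-a.s.\ and $P^\mu(X_{J(0)}\le 0)>0$ for $J(0)>0$ by the regularity of the previous paragraph — contradicting $H(0)=0$), whence that same equivalence gives $\ovl F_\mu(J(t))=\ovl H(t)=\ovl F_\mu(I_\mu(t))$ for all $t$; the strict monotonicity of $\ovl F_\mu$ on $[0,\zeta_\mu)$ then forces $J\equiv I_\mu$.
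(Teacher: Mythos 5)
Your proposal is correct and follows essentially the same route as the paper: reduce the problem to the identity $\ovl F_\mu\circ I=\ovl H$, establish that $\ovl F_\mu$ is continuous and strictly decreasing (with the drift atom at $x/|\mathtt d|$ in case (iii) being exactly the reason the paper requires $\mu$ atomless there), and then read off existence and uniqueness from the generalized inverse. The only part you defer — the case-by-case regularity of $\ovl F_\mu$ — is precisely what the paper supplies via Lemma~\ref{lem:distinf} for continuity and, for strict decrease, a short Markov-property argument combined with the fact that the support of an infinitely divisible law not of subordinator-plus-drift type is unbounded below, so $P(X_*(s)<-x)>0$ for all $s>0$, $x\ge0$.
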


For the proof, we need some properties of the distribution of the
running infimum.

\begin{Lemma}\label{lem:distinf}
\emph{(i)} If $X$ satisfies Assumption~\ref{as:smooth}\textup{(i)}
or \textup{(ii)},
the CDF of $X_*(t)$ is continuous, for any $t>0$.

\emph{(ii)} Alternatively, if only Assumption~\ref
{as:smooth}\textup{(iii)} holds, then for any $t>0$
the CDF of $X_*(t)$ is continuous on the set $\mbb R_-\setminus\min\{
\mathtt d t,0\}$,
with $\mbb R_-=(-\infty,0]$.
\end{Lemma}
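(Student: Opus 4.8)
The plan is to reduce the claim to the absence of atoms in the law of $X_*(t)$. Since $X_*(t)\le X_0=0$, the CDF of $X_*(t)$ equals $1$ on $[0,\infty)$, so continuity has to be checked only on $\bR_-$, where it amounts to $\bP\big(X_*(t)=x\big)=0$. Fix $x\le0$ and let $\tau=\inf\{s\ge0:X_s\le x\}$. On $\{X_*(t)=x\}$ the process reaches $x$ without overshoot, so $\tau\le t$ and, by right-continuity (as $X_s>x$ for $s<\tau$), $X_\tau=x$, and thereafter $X\ge x$ on $[\tau,t]$. The strong Markov property at $\tau$ then yields the exact identity
\[
\bP\big(X_*(t)=x\big)=\bE\!\left[\mathbf 1_{\{\tau\le t,\ X_\tau=x\}}\,\phi(t-\tau)\right],\qquad\phi(v):=\bP_0\big(X_*(v)=0\big)=\bP_0\big(\tau^X_0>v\big),
\]
with $\tau^X_0=\inf\{s\ge0:X_s<0\}$ and $\phi(0)=1$. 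I would then distinguish two cases, according to whether or not $0$ is regular for $(-\infty,0)$, equivalently whether $\phi\equiv0$ on $(0,\infty)$.

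\emph{Regular case.} Here $\phi(t-\tau)=\mathbf 1_{\{\tau=t\}}$, so the identity collapses to $\bP(X_*(t)=x)=\bP(\tau=t,\,X_t=x)\le\bP(X_t=x)$. Under Assumption~\ref{as:smooth}(i) or (ii) the law of $X_t$ has no atoms — it has a density when $\sigma^2>0$, and is atomless whenever $\nu(\bR)=+\infty$ — so $\bP(X_*(t)=x)=0$ for every $x$ and the CDF is continuous on all of $\bR$. Assumption~\ref{as:smooth}(i) always falls here, as $\sigma^2>0$ makes $0$ regular for $(-\infty,0)$; so does Assumption~\ref{as:smooth}(iii) with $\mathtt d<0$, the drift then carrying $X$ strictly below $0$ at once, and there the only atom of the law of $X_t$ sits at $\mathtt d t$ (carried by the jump-free path), giving continuity on $\bR_-\setminus\{\mathtt d t\}$ with $\mathtt d t=\min\{\mathtt d t,0\}$.

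\emph{Irregular case.} If $0$ is irregular for $(-\infty,0)$, then $X$ has no Gaussian part and is of bounded variation (both $\sigma^2>0$ and unbounded variation force regularity of $0$ for $(-\infty,0)$), and its drift satisfies $\mathtt d\ge0$, since $X_t/t\to\mathtt d$ almost surely as $t\downarrow0$ for bounded-variation L\'evy processes (see e.g.\ \cite{Bertoin}) and $\mathtt d<0$ would again give regularity. So this case occurs only under Assumption~\ref{as:smooth}(ii) or (iii), where $X_s=\mathtt d s+\sum_{r\le s}\Delta X_r$ with $\mathtt d\ge0$; as the drift and positive-jump parts are non-decreasing, the path decreases only at its negative jumps, so $X_*(t)$ is attained either at $s=0$ (value $0$) or immediately after a negative jump (value $X_{s^-}+\Delta X_s$). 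For fixed $x<0$ the latter is excluded up to a null set by the compensation (L\'evy-system) formula:
\[
\bP\big(\exists\,s\le t:\ \Delta X_s\neq0,\ X_{s^-}+\Delta X_s=x\big)\le\bE\!\left[\int_0^t\nu\big(\{x-X_{s^-}\}\big)\,ds\right]=\int_0^t\sum_a\nu(\{a\})\,\bP(X_s=x-a)\,ds=0,
\]
the right-hand side (with the sum over the atoms $a$ of $\nu$) vanishing because $\nu$ has no atoms (Assumption~\ref{as:smooth}(iii)) or $X_s$ has no atoms for $s>0$ (Assumption~\ref{as:smooth}(ii)). Hence $\bP(X_*(t)=x)=0$ for all $x\neq0$, while $\bP(X_*(t)=0)=\phi(t)>0$, so the CDF is continuous on $\bR_-\setminus\{0\}$, and here $0=\min\{\mathtt d t,0\}$. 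Putting the two cases together yields the statement; in the setting of part~(i), continuity at the point $0$ itself holds precisely when $0$ is regular for $(-\infty,0)$, which is automatic once $\sigma^2>0$.

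The step I expect to be the real obstacle is the structural dichotomy behind the irregular case — establishing that irregularity of $0$ for $(-\infty,0)$ confines $X$ to the bounded-variation, non-negative-drift regime in which the path decreases only by jumps — and then the bookkeeping that pins the surviving atom to $\min\{\mathtt d t,0\}$ (the time-$t$ value of the jump-free trajectory when $\mathtt d<0$, the starting value $0$ otherwise). The other ingredients — the strong Markov identity, the atomlessness of the law of $X_t$, the small-time behaviour of bounded-variation L\'evy processes, and the compensation estimate — are standard.
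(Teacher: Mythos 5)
Your argument is self-contained and structurally quite different from the paper's treatment: the paper simply cites Sato \cite[Lemma 49.3]{Sato} and Pecherskii--Rogozin \cite[Lemma 1]{PR} for part (i), and disposes of part (ii) in one line by conditioning on the first jump (no jump by time $t$ forces $X_*(t)=\min\{\mathtt d t,0\}$ deterministically; given at least one jump, atomlessness of $\nu$ makes the relevant laws atomless). Your regularity dichotomy is a legitimate alternative, but as written it has two genuine gaps.

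First, the ``exact identity'' $\bP(X_*(t)=x)=\bE[\mathbf 1_{\{\tau\le t,\,X_\tau=x\}}\phi(t-\tau)]$ is not exact. The infimum of a c\`adl\`ag path over $[0,t]$ need only be attained as a value \emph{or as a left limit}: the path can decrease to $x$ from above, satisfy $X_{s^-}=x$, and jump upwards at $s$, so that $X_u>x$ for every $u\le t$ while $X_*(t)=x$ and $\tau>t$. Your regular-case collapse $\bP(X_*(t)=x)\le \bP(X_t=x)$ therefore misses a contribution, and the same oversight recurs in the irregular case, where the claim that the infimum is attained ``at $s=0$ or immediately after a negative jump'' ignores both this upward-jump scenario and accumulation points of infinitely many small negative jumps (where the infimum is a limit of post-jump values, not itself one). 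These exceptional events are in fact null, and your own compensation-formula device shows it --- apply it to jumps of modulus exceeding $\epsilon$ to get $\bE[\#\{s\le t:|\Delta X_s|>\epsilon,\ X_{s^-}=x\}]=\nu(|y|>\epsilon)\int_0^t\bP(X_s=x)\,ds=0$ under (i)/(ii), and under (iii) use that $\{s:\mathtt d s=x\}$ is Lebesgue-null --- but the proof must say so; as written the identity is false and the step fails.

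Second, your case analysis does not deliver part (i) as literally stated, and you should not paper over this with ``putting the two cases together yields the statement.'' Under Assumption~\ref{as:smooth}(ii) with bounded variation and drift $\mathtt d>0$ (e.g.\ $X_t=t-S_t$ with $S$ an infinite-activity driftless subordinator), $X_t/t\to\mathtt d>0$ as $t\downarrow0$, so $0$ is irregular for $(-\infty,0)$ and $\bP(X_*(t)=0)>0$: the CDF has an atom at $0$. What your argument actually proves under (i)/(ii) is continuity on $\bR\setminus\{0\}$ --- which is precisely what Sato's Lemma 49.3 asserts ($\bP(X_*(t)=x)=0$ for $x<0$) and precisely what the paper uses in the proof of Theorem~\ref{thm:I}, where continuity is only invoked at strictly negative points. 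So the discrepancy is with the lemma's wording rather than with its use, but your write-up should state plainly that continuity at $0$ requires regularity of $0$ for $(-\infty,0)$, rather than simultaneously claiming the statement and its exception.
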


The proof of Lemma~\ref{lem:distinf}(i) can be found in Sato~\cite{Sato},
Lemma~49.3, and
Pe{\v{c}}erski{\u\i} and Rogozin~\cite{PR}, Lemma~1, while Lemma~\ref
{lem:distinf}(ii) follows by conditioning
on the first jump of the process $X$.

\begin{pf*}{Proof of Theorem~\ref{thm:I}}
Denote by $\mathtt c$ the value 0 or $\max\{-\mathtt d,0\}$ according
to whether or not
$X$ satisfies at least one of the Assumptions \ref{as:smooth}(i) and (ii).
The key observation in the proof is that for any $x>0$
the map $t\mapsto P_x(\tau_0^X>t)$ is (a) strictly decreasing
and (b) continuous at any $t$ satisfying $\mathtt c t\neq x$.
To verify claim (b), it suffices to show that
$P_x(\tau_0^X=t)$ is zero for any nonnegative $t$ that is such that
$\mathtt c t\neq x$.
The latter follows as consequence of the bound $P_x(\tau_0^X=t) \leq
P_0(X_*(t)=-x)$ that holds
for any strictly positive $x$ and $t$, and the fact (from Lemma~\ref
{lem:distinf})
that the CDF of $X_*(t)$ is continuous on $(-\infty,0]\setminus\{
-\mathtt c t\}$.
To see that claim (a) is true, we observe that, by the Markov property,
we have for strictly positive $x$, $t$ and $s$,
%
\begin{eqnarray}
\label{eq:lb2}
&&P_x \bigl(\tau^X_0>t
\bigr) - P_x \bigl(\tau^X_0>t+s \bigr)\nonumber \\
&&\qquad=
P_x \bigl(\tau ^X_0 > t,
\tau_0^X \leq t+s \bigr)
\\
&&\qquad\ge E \bigl(\mathbf 1_{\{X_*(t) > -x\}} P\bigl(X_*(s) < -x-z\bigr)|_{z=X_t}
\bigr).\nonumber
\end{eqnarray}
Since for any strictly positive epoch $s$ the random variable $X_s$ has
an infinitely divisible distribution
and the support of an infinitely divisible
distribution not corresponding to the sum of a subordinator and a
deterministic drift
is unbounded from below
(e.g., \cite{Sato}, Corollary~24.4),
it follows that under Assumptions \ref{as:smooth} we have
%
\begin{equation}
\label{eq:lb1} P\bigl(X_*(s) < -x\bigr) \ge P(X_s < -x) > 0,\qquad  s>0, x
\ge0.
\end{equation}
By combining~\eqref{eq:lb2} and~\eqref{eq:lb1},
we have for any strictly positive $x$, $t$ and $s$,
\[
P_x \bigl(\tau_0^X > t \bigr) >
P_x \bigl(\tau_0^X > t+s \bigr),
\]
and hence (b) holds true.

The above key observation in conjunction with Lebesgue's dominated
convergence theorem and the assumption that $\mu$ has no atoms
if $X$ does not satisfy Assumption~\ref{as:smooth}(i) and (ii) imply
that the map
$t\mapsto\overline F_\mu(t)$ is continuous and strictly decreasing.
Denote by $Y^\mu$ the time-changed process $X\circ I_\mu$.
Since $I_\mu$ is monotone increasing and continuous, we have
%
\begin{equation}
P^{\mu} \bigl(\tau^{Y^{\mu}}_0 \ge t \bigr) =
P^{\mu} \bigl(\tau^X_0\ge I_\mu(t)
\bigr) = \overline F_\mu \bigl(\overline F_\mu^{-1}
\bigl(\overline H(t)\bigr) \bigr) = \overline H(t)
\end{equation}
for $t\in\mbb R^+$, where we used in the final equality that
$\overline F_\mu$ is continuous.
\end{pf*}

We next turn to the specification of the second degree of freedom, the
initial distribution $\mu$.
By an appropriate choice of the randomisation $\mu$ the form of the function
$\overline F_\mu$ in the specification of the time-change $I_\mu$ in
\eqref{eq:Imu}
can be considerably simplified. In particular, the function $\overline
F_\mu
$ is equal to an exponential
if $\mu$ is taken to be equal to any quasi-invariant distribution
of the process $X$ killed
at the epoch of first-passage below the level~0, the definition of
which, we recall, is as follows.

\begin{Def}\label{def:linv}
For given $\lambda\in\mbb R^+$, the probability measure $\mu$ on the
measurable
space $(\mbb R^+, \mathcal B(\mbb R^+))$ is a {\em$\lambda$-invariant
distribution} for the process $X$ killed
at the epoch of first entrance into the negative half-axis $(-\infty
,0)$ if
%
\begin{equation}
\label{eq:linv} P^\mu \bigl(X_t\in A, t <
\tau_0^X \bigr) = \mu(A) \te{-\lambda t} \qquad \mbox{for all
$A\in\mathcal B\bigl(\mbb R^+\bigr)$.}
\end{equation}
The probability measure $\mu$ is a \emph{quasi-invariant distribution of
$\{X_t, t<\tau^X_0\}$}
if $\mu$ is a $\lambda$-invariant distribution of $\{X_t, t<\tau
^X_0\}$
for some $\lambda\in\mbb R^+$.
\end{Def}

To guarantee existence of quasi-invariant distributions, we restrict ourselves
in the subsequent analysis to L\'{e}vy processes $X$ that admit a
finite exponential moment.

\begin{As}\label{A1}
The distribution of $X_1$ satisfies the following exponential moment condition:
\[
E\bigl[\te{\e X_1}\bigr]<1\qquad \mbox{for some $\e\in(0,\infty)$},
\]
where $E[\cdot]$ denotes the expectation under
the probability measure $P(=P_0)$.
\end{As}

\begin{figure}

\includegraphics{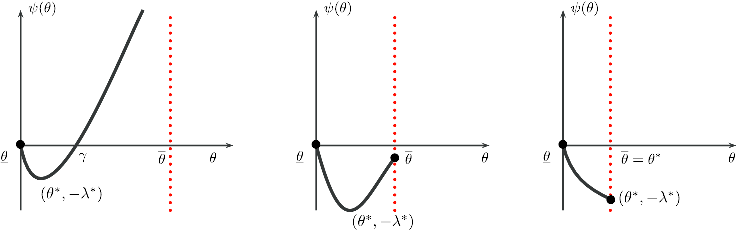}

\caption{Three graphs of Laplace
exponents $\psi$ of L\'{e}vy processes satisfying Assumption \protect
\ref{A1},
with $-\lambda^*= \min_{\theta\in[\unl\theta,\overline \theta
]}\psi
(\theta) =
\psi(\theta^*)$,
where $[\unl\theta,\overline \theta]$ denotes the closure of the
domain of
$\psi$.
In the left-hand figure, $\gamma$ denotes the largest root of the
Cram\'
{e}r--Lundberg equation $\psi(\theta)=0$
and $\theta^*<\overline \theta$ satisfies the equation $\psi
'(\theta
)=0$. In
the right-hand figure,
$\theta^*$ and $\overline \theta$ coincide.}\label{fig:psi}
\end{figure}

Under Assumption~\ref{A1}, there exists a continuum of quasi-invariant
distributions of the process $X$ killed upon the first moment of
entrance into the negative half-axis, which are given in terms of the
Laplace exponent and the positive Wiener--Hopf factor of $X$.

The Laplace exponent $\psi\dvtx\mbb R\to(-\infty,\infty]$ of $X$,
given by $\psi(\theta)= \log E[\te{\theta X_1}]$ for real $\theta$, is
finite valued and convex
when restricted to the interior $(\underline\theta,\overline\theta
)$ of
its maximal domain,
where $\overline\theta=\sup\{\th\in\mbb R\dvtx E[\exp\{\th X_1\}
]<\infty\}$ and
$\underline{\theta} = \inf\{\th\in\mbb R\dvtx E[\exp\{\th X_1\}
]<\infty\}$
(see Figure~\ref{fig:psi} for plots
of Laplace exponents of L\'{e}vy processes satisfying Assumption~\ref{A1}.)
Since $\psi$ is a convex lower-semi-continuous function that under
Assumption~\ref{A1} takes a strictly negative value at some
$\varepsilon
>0$, it follows that the infimum of $\psi$ is strictly negative and is
attained at some $\theta^*\in[\underline\theta,\overline\theta]$,
that is,
%
\begin{equation}
\label{eq:petrov} -\lambda^*:= \inf_{\theta\in[\underline{\theta},\overline
\theta
]}\psi(\theta) = \psi
\bigl(\theta^*\bigr)<0.
\end{equation}
On the interval $(\underline\theta,\theta^*]$
the function $\psi$ is continuous and
strictly monotone decreasing with inverse denoted by
%
\begin{equation}
\label{eq:phiphi} \bar\phi: \bigl[-\lambda^*,\psi(\underline\theta)\bigr ) \to\bigl (
\underline \theta,\theta^* \bigr].
\end{equation}
In particular, we note $\psi'(0+)\in[-\infty,0)$ so that the mean
$E[X_1]$ of $X_1$ is strictly negative.

The positive Wiener--Hopf factor is the function
$\Psi^+\dvtx(0,\infty)\times\mbb D^+\to\mbb C$ with $\mbb D^+:=\{
u\in
\mbb
C\dvtx\Im(u)\ge0\}$ given by
%
\begin{equation}
\label{eq:Psi+def} \Psi^+(q,\theta)= E\bigl[\exp\bigl(\ii\theta X^*_{e(q)}
\bigr)\bigr],\qquad  q>0, \th\in\mbb D^+,
\end{equation}
with
$e(q)$ denoting an Exp($q$) random time
that is independent of $X$. In Lemma~\ref{lem:anex}, we show that
the function $\Psi^+$ can be uniquely extended to the set
$\{(q,\theta)\dvtx\Re(q) \ge-\lambda^*, \Im(\theta)\ge-\theta
^*\}
\setminus\{(-\lambda^*,-\theta^*)\}$ (by analytical continuation and
continuous extension); this extension is also denoted by $\Psi^+$.

Consider for any $\lambda\in(0,\lambda^*]$
the function $\WH\mu_\lambda\dvtx\mbb R^+\to\mbb C$ given by
%
\begin{equation}
\label{eq:mul} \WH\mu_\lambda(\theta) = \frac{\bar\phi(-\lambda)}%
{\bar\phi(-\lambda) + \theta}\cdot\Psi^+(-
\lambda, \ii\theta),
\end{equation}
where $\bar\phi$ denotes the inverse of the Laplace exponent as
described above.
The function $\WH\mu_\lambda$ is the Laplace transform
of some probability measure $\mu_\lambda$---an explicit expression for
$\mu_\lambda$ is given in
Lemma~\ref{lem:LT}.
The members of the family $\{\mu_\lambda, \lambda\in(0,\lambda^*]\}$
are quasi-invariant distributions of $\{X_t, t< \tau^X_0\}$:

\begin{Thm}\label{thm:II}
Assume that $X$ is a L\'{e}vy process satisfying\break $E[\exp(-\e X_1)]<1$
for some $\e\in(0,\infty)$.
Then, for any $\lambda\in(0,\lambda^*]$, $\WH\mu_\lambda$ is the
Laplace transform of some probability measure $\mu_{\lambda}$ on
$(\mbb
R^+,\mathcal B(\mbb R^+))$,
which is the unique $\lambda$-invariant distribution of $\{X_t, t<
\tau
^X_0\}$,
the process $X$ that is killed upon the epoch of first-passage into the
negative half-line
$(-\infty,0)$.
\end{Thm}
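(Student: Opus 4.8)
The plan is to prove Theorem~\ref{thm:II} in two parts: first that $\WH\mu_\lambda$ really is the Laplace transform of a probability measure, and second that this measure is $\lambda$-invariant and the unique such measure. For the first part, I would recall the Pecherskii--Rogozin / Wiener--Hopf identity: for $q>0$ and suitable $\theta$, $E[\te{\theta X_{e(q)}}] = \frac{q}{q-\psi(\theta)}$ factors as $\Psi^+(q,-\ii\theta)\,\Psi^-(q,-\ii\theta)$, where $\Psi^\pm$ are the transforms of $X^*_{e(q)}$ and $X_*(e(q))$. Setting $q=-\lambda+\psi(\theta)$ is of course circular, so instead I would argue by analytic continuation starting from $q>0$, using Lemma~\ref{lem:anex} to push the domain down to $\Re(q)\ge-\lambda^*$. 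Evaluating the positive factor along the imaginary axis and at the shifted argument gives that $\frac{\bar\phi(-\lambda)}{\bar\phi(-\lambda)+\theta}$ is the Laplace transform of an $\mathrm{Exp}(\bar\phi(-\lambda))$ law, and $\Psi^+(-\lambda,\ii\theta)$ is the Laplace transform of $X^*$ evaluated at an independent exponential-type time with ``rate'' $-\lambda$; the product $\WH\mu_\lambda$ is then the Laplace transform of an independent sum, hence of a genuine probability measure on $\mbb R^+$, provided $\WH\mu_\lambda(0)=1$, which follows since $\Psi^+(-\lambda,0)=1$ by the continuous extension.

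For the invariance property, the natural route is to verify the defining identity \eqref{eq:linv} at the level of Laplace--Fourier transforms. Fix $\lambda\in(0,\lambda^*]$ and consider the resolvent-type quantity $R_\lambda(\theta):=\int_0^\infty \te{\lambda t}\,E^{\mu_\lambda}\!\big[\te{\ii\theta X_t}\mathbf 1_{\{t<\tau_0^X\}}\big]\,\td t$; the claim \eqref{eq:linv} is equivalent to $R_\lambda(\theta) = \lambda^{-1}\WH\mu_\lambda(-\ii\theta)$ for all real $\theta$ in the appropriate strip, i.e.\ to the statement that the $\te{\lambda t}$-exponentially-weighted occupation measure of the killed process started from $\mu_\lambda$ is a constant multiple of $\mu_\lambda$ itself. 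To evaluate the left-hand side I would start the process from a single point $x>0$, use the Markov property to decompose at $\tau_0^X$ (or, more conveniently, work with the $q$-resolvent of the process killed on exiting $(0,\infty)$ and identify it via the Wiener--Hopf factors as in Section~\ref{sec:prel0}), obtaining the resolvent kernel of the killed process as a convolution built from $\Psi^+$ and $\bar\phi$; then integrate against $\mu_\lambda(\td x)$. The key algebraic miracle that must fall out is that the ``overshoot/undershoot'' pieces combine with the geometric factor $\frac{\bar\phi(-\lambda)}{\bar\phi(-\lambda)+\theta}$ so that the whole expression collapses back to $\lambda^{-1}\WH\mu_\lambda$; this is exactly where the specific choice of the exponential rate $\bar\phi(-\lambda)$ (the root of $\psi(\theta)=-\lambda$ on the decreasing branch) is forced, because $\te{\bar\phi(-\lambda)x}$ is the $\lambda$-harmonic function making $\te{-\lambda t}\te{\bar\phi(-\lambda)X_t}$ a martingale.

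For uniqueness I would argue that if $\mu$ is any $\lambda$-invariant distribution of the killed process, then applying \eqref{eq:linv} and the strong Markov property at $\tau_0^X$ pins down the Laplace transform of $\mu$: integrating \eqref{eq:linv} in $t$ against $\te{\lambda t}$ and taking transforms in space yields a linear equation for $\WH\mu$ whose solution is uniquely $\WH\mu_\lambda$, because the integral operator involved (the $\lambda$-exponentially-weighted resolvent of the killed semigroup) has, under Assumption~\ref{A1}, a one-dimensional fixed space; concretely, $\te{-\lambda t}\te{\bar\phi(-\lambda)X_{t\wedge\tau_0^X}}$ being a bounded martingale and optional stopping force the exponential moment $E^\mu[\te{\bar\phi(-\lambda)X_0}]$ and then, via the already-established formula for the resolvent kernel, the full transform. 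Alternatively one invokes the Perron--Frobenius / $R$-theory for the killed Lévy process on $(0,\infty)$: under Assumption~\ref{A1} the process is $\lambda^*$-positive (or $\lambda$-transient for $\lambda<\lambda^*$ in a way still yielding a unique normalized invariant measure on the state space), and the $\lambda$-invariant probability measure, when it exists, is unique.

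The main obstacle I expect is the analytic-continuation bookkeeping in the first and second steps: the Wiener--Hopf factor $\Psi^+(q,\theta)$ is only defined a priori for $q>0$ and $\Im\theta\ge0$, and everything hinges on extending it to $q=-\lambda\in[-\lambda^*,0)$ and to the argument shifted by $\ii\bar\phi(-\lambda)$, staying inside the region where the factorisation identity still holds and where $\Psi^+$ remains the transform of an honest (sub)probability. Getting the strip of validity exactly right — in particular handling the boundary case $\lambda=\lambda^*$, $\theta^*=\bar\theta$ depicted in the right-hand panel of Figure~\ref{fig:psi}, where the factor degenerates — is where the delicate work lies; once the identity $E[\te{\theta X_{e(q)}}]=\Psi^+\Psi^-$ and its continuation are in hand, the martingale argument and the collapse of the convolution are essentially forced.
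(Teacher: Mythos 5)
Your proposal gets the broad shape right — construct $\mu_\lambda$ via the Wiener--Hopf/Esscher machinery, verify $\lambda$-invariance, then argue uniqueness — but at each of the three steps the route you sketch diverges from what the paper actually does, and the second and third steps as you describe them have gaps that would need real work to close.

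On the Laplace-transform claim, your heuristic that $\Psi^+(-\lambda,\ii\theta)$ is ``the Laplace transform of $X^*$ at an independent exponential-type time with rate $-\lambda$'' cannot be taken literally (there is no exponential time with negative rate). The paper's Lemma~\ref{lem:LT} makes this precise exactly via the Esscher identity \eqref{WHpid}: $\Psi^+(-\lambda,\ii\theta)=\Psi^+_r(0,\ii(\theta+r))/\Psi^+_r(0,\ii r)$ with $r=\bar\phi(-\lambda)$, which exhibits $\Psi^+(-\lambda,\ii\cdot)$ as the Laplace transform of the $\te{-r\cdot}$-tilted invariant law of the reflected process $Z$ under $P^{(r)}$; the product $\WH\mu_\lambda$ is then the transform of the convolution \eqref{eq:mu22}. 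Your analytic-continuation bookkeeping alone does not yield positivity; you need this probabilistic identification.

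On invariance, you propose to verify the resolvent-level identity $R_\lambda(\theta)=\lambda^{-1}\WH\mu_\lambda(\theta)$, which is precisely the integral-equation criterion of Proposition~\ref{lem2}(ii), and then hope the ``algebraic miracle'' collapses the convolution against $\mu_\lambda$. The paper only carries this out by residue calculus in the mixed-exponential case (Appendix~\ref{ssec:resd}); in the general case it is not an ``essentially forced'' computation, and the paper instead proves invariance by an entirely different, probabilistic argument (Proposition~\ref{prop:linv}): one shows $\te{\lambda t}P^{\mu_\lambda}(X_*(t)\ge 0)\to 1$ and that the conditional law of $(Z_t,X_*(t))$ given $X_*(t)\ge 0$ converges (under the $P^{(r)}$-tilting, via dominated convergence on the explicit expression \eqref{conv.limit}) to a product with exponential marginal for $-X_*(t)$, after which a Markov-property bootstrap yields \eqref{eq:linv} exactly.

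On uniqueness, the martingale route you suggest does not close: $\te{-\lambda t}\te{\bar\phi(-\lambda)X_{t\wedge\tau_0^X}}$ is indeed a martingale, but $\bar\phi(-\lambda)>0$ so it is unbounded on $\{t<\tau_0^X\}$, and optional stopping would at best give you one exponential moment of $\mu$, not the full transform. The Perron--Frobenius / $R$-theory alternative you mention requires verifying $\lambda$-recurrence or $\lambda$-positivity of the killed semigroup on the noncompact state space $(0,\infty)$, which is far from automatic. The paper instead proves uniqueness by a Banach contraction argument (Proposition~\ref{lem:unique}): after Esscher-transforming by $\theta^*$, the defining fixed-point map $\mc H$ contracts with ratio $(q+\lambda)/(q+\lambda^*)<1$ on a suitable affine subset of measures, and the boundary case $\lambda=\lambda^*$ is handled by a complex perturbation of the tilting parameter. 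That argument is elementary and self-contained, whereas the two you propose would each require substantial additional hypotheses or machinery.
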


In the case that $X$ is a mixed-exponential L\'{e}vy process,
the measures $\mu_\lambda$, $\lambda\in(0,\lambda^*]$,
can be shown to be equal to certain
mixed-exponential distributions; see Sections~\ref{sec:pthecase}.

Under any of the initial distributions $\mu_\lambda$ given in Theorem~\ref{thm:II},
the distribution of the first-passage time $\tau^X_0$ is exponential,
and thus the corresponding survival function $\overline F_{\mu
_\lambda}$ and
time change $I_{\mu_\lambda}$ defined in \eqref{eq:Imu}
take explicit forms:
\begin{eqnarray*}
\overline F_{\mu_\lambda}(t) &=& \exp(-\lambda t), \qquad t\in\mbb R^+,
 \lambda\in\bigl(0,
\lambda^*\bigr],
\\ I_{\mu_\lambda}(t) &=& - \frac{1}{\lambda} \log\overline H(t).
\end{eqnarray*}
When the survival function $\overline H$ is continuous, $I_{\mu
_\lambda}(t)$
is equal to a
multiple of the cumulative hazard rate integrated over the interval $[0,t]$.

The combination of Theorems \ref{thm:I} and \ref{thm:II} immediately
yields the following result.

\begin{Cor}\label{thm:IFPT} For any given continuous survival function
$\overline H$ and
$\lambda\in(0,\lambda^*]$, the RIFPT problem is solved by the pair
$(\mu_\lambda, I_{\mu_\lambda})$, that is,
\[
P^{\mu_\lambda} \bigl(\tau^{Y^{\mu_\lambda}}_0 > t \bigr) = \overline
H(t),\qquad  t\in\mbb R^+.
\]
\end{Cor}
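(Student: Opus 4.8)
The plan is to simply \emph{chain together} the two main theorems, since Corollary~\ref{thm:IFPT} asserts nothing beyond their conjunction. First I would verify that the hypotheses of both theorems are met by the data in the statement. We are given a continuous survival function $\ovl H$ (equivalently a continuous CDF $H$ on $\mbb R^+$), the process $X$ is a L\'evy process satisfying Assumption~\ref{A1} (which in particular forces $\psi'(0+)<0$, hence $E[X_1]<0$, and guarantees $\lambda^*>0$ so the range $(0,\lambda^*]$ is non-empty), and we fix some $\lambda\in(0,\lambda^*]$. The only subtlety is making sure the chosen initial distribution $\mu_\lambda$ is an admissible input to Theorem~\ref{thm:I}: I would invoke Theorem~\ref{thm:II} to get that $\mu_\lambda$ is a genuine probability measure on $(\mbb R^+,\mc B(\mbb R^+))$ which is the unique $\lambda$-invariant distribution of $\{X_t,\,t<\tau^X_0\}$. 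From $\lambda$-invariance with $A=\mbb R^+$ in \eqref{eq:linv} one reads off $P^{\mu_\lambda}(\tau^X_0>t)=P^{\mu_\lambda}(X_t\in\mbb R^+,\,t<\tau^X_0)=\te{-\lambda t}$, so $\ovl F_{\mu_\lambda}(t)=\te{-\lambda t}$; in particular $\mu_\lambda(\{0\})=\ovl F_{\mu_\lambda}(0)-1=0$ is impossible as a derivation, so instead I note $\mu_\lambda(\{0\})=0$ follows because a $\lambda$-invariant $\mu$ concentrated at $0$ would be killed instantly, contradicting \eqref{eq:linv}; and if only Assumption~\ref{as:smooth}(iii) holds one checks from the explicit Laplace transform \eqref{eq:mul} (cf.\ Lemma~\ref{lem:LT}) that $\mu_\lambda$ has a density, hence no atoms.

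With admissibility in hand, Theorem~\ref{thm:I} applies verbatim: $I_{\mu_\lambda}=I_\mu$ with $\mu=\mu_\lambda$ is the unique time-change making $(\mu_\lambda,I_{\mu_\lambda})$ a solution of the RIFPT problem, i.e.\ for $Y^{\mu_\lambda}=X\circ I_{\mu_\lambda}$ we have $P^{\mu_\lambda}(\tau^{Y^{\mu_\lambda}}_0>t)=\ovl H(t)$ for all $t\in\mbb R^+$. That is exactly the asserted identity, so the proof is complete. If desired I would also record the explicit form: substituting $\ovl F_{\mu_\lambda}(t)=\te{-\lambda t}$ into \eqref{eq:Imu} gives $\ovl F_{\mu_\lambda}^{-1}(x)=-\frac1\lambda\log x$ and hence $I_{\mu_\lambda}(t)=-\frac1\lambda\log\ovl H(t)$, matching the display preceding the corollary.

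There is essentially no obstacle here; the work is entirely bookkeeping. The one point requiring a line of care is the hypothesis-matching under Assumption~\ref{as:smooth}(iii) alone, where Theorem~\ref{thm:I} demands that $\mu$ be atomless — this is not automatic from the abstract statement of Theorem~\ref{thm:II} and must be extracted from the concrete description of $\mu_\lambda$ (its Laplace transform \eqref{eq:mul}, or the explicit density in Lemma~\ref{lem:LT}), from which absolute continuity on $(0,\infty)$ is visible. Everything else — continuity of $\ovl H$, positivity of $\lambda^*$, the probability-measure property of $\mu_\lambda$ — is handed to us directly by the standing assumptions and by Theorem~\ref{thm:II}.
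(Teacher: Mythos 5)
Your proposal is correct and follows exactly the route the paper intends: the corollary is stated as an immediate consequence of chaining Theorem~\ref{thm:II} (which yields $\ovl F_{\mu_\lambda}(t)=\te{-\lambda t}$ and hence $I_{\mu_\lambda}(t)=-\lambda^{-1}\log\ovl H(t)$) with Theorem~\ref{thm:I}. Your extra care in checking that $\mu_\lambda$ is atomless (via its absolute continuity, visible from \eqref{eq:mulambda} and Lemma~\ref{lem:LT}) so that the hypotheses of Theorem~\ref{thm:I} are met is a worthwhile refinement the paper leaves implicit.
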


\subsection{Example} As a simple example, let us consider the case
where $X_t$ is Brownian motion with drift, with initial distribution
$\mu$, or equivalently $ X_t=X_0+W_t+\eta t$
where $W_t$ is a standard Brownian motion, $\eta\in\mathbb{R}$ and
$X_0\sim\mu$ is a random variable independent of $\{W_t, t\in\mathbb
{R}^+\}$. In this case,
\[
\psi(\theta)=\log E\bigl[\te{\theta X_1}\bigr]=\eta\theta+\tfrac{1}{2}
\theta^2
\]
and $\underline{\theta}=-\infty, \overline{\theta}=+\infty$, so the
coefficients in \eqref{eq:petrov} are
$ \theta^*=-\eta, \lambda^*=\frac{1}{2}\eta^2$
and the inverse of $\psi$ to the left of $\theta^*$ is
\[
\overline \phi(y)=-\eta-\sqrt{\eta^2+2y}.
\]
The positive Wiener--Hopf factor is
\[
\Psi^+(q,\theta)=\frac{-\ii(\eta-\sqrt{\eta^2+2q})}{\theta-\ii
(\eta
-\sqrt{\eta^2+2q})}.
\]
The Laplace transform of the $\lambda$-invariant distribution is
therefore given by
%
\begin{eqnarray}\label{ex}
\widehat{\mu}_\lambda(\theta)&=& \biggl(\frac{-\eta
-\sqrt
{\eta
^2-2\lambda}}{\theta-(\eta+\sqrt{\eta^2-2\lambda})} \biggr) \biggl(
\frac{-\eta+\sqrt{\eta^2-2\lambda}}{\theta-(\eta-\sqrt
{\eta
^2-2\lambda})} \biggr)
\nonumber
\\[-8pt]
\\[-8pt]
\nonumber
&=&\frac{2\lambda}{\theta_+-\theta_-} \biggl(\frac{1}{\theta
-\theta
_+}-\frac{1}{\theta-\theta_-} \biggr),
\end{eqnarray}
where $\theta_\pm= \eta\pm\sqrt{\eta^2-2\lambda}$. The condition
$\eta
\in[-\sqrt{2\lambda},0)$ is necessary and sufficient for the expression
at \eqref{ex} to be the Laplace transform of a probability measure on
$\mathbb{R}^+$, and we note that this is the same as the condition
$\lambda\in(0,\lambda^*]$ of Theorem~\ref{thm:II}. Under this condition
$\mu_\lambda$ is a mixture of exponentials (or a gamma distribution if
$\eta=-\sqrt{2\lambda}$). This special case was presented in our
earlier paper~\cite{DP}.

\section{Multi-dimensional RIFPT}\label{sec:md-rifpt}
Given a joint
survival function
$\overline H\dvtx(\mbb R^+)^d\to[0,1]$ and a $d$-dimensional L\'
{e}vy process,
a {$d$-dimensional version of the RIFPT problem} is phrased as the
problem to find a probability measure on $\mbb R^d$ and a collection of
increasing continuous functions $I^1,\ldots, I^d$ such that the
following identity holds:
%
\begin{eqnarray}\qquad
\label{eq:mIFPT} P^{\mu} \bigl(\tau^{Y^1} > t_1,
\ldots, \tau^{Y^d}> t_d \bigr) &=& \overline H(t_1,
\ldots, t_d) \qquad\mbox{for all $t_1,\ldots, t_d
\in\mbb R^+$},
\\
Y^i&:=& X\circ I^i \qquad\mbox{for $i=1,\ldots, d$}.
\end{eqnarray}
In order to present a solution, we will impose some structure on the
joint survival function $\overline H$, assuming that
it is from the class of multivariate \emph{generalised frailty survival
functions} that is defined as follows.

\begin{Def*}
A joint survival function $\overline H\dvtx\mbb R^d_+\to[0,1]$ is
called a
($d$-dimensional)
\emph{generalised frailty distribution} if there exists
a random vector $\Upsilon=(\Upsilon_1,\ldots, \Upsilon_m)$ for some
$m\in\mbb N$ such that we have
\[
\overline H(t_1,\ldots, t_d) = E \Biggl[\prod
_{i=1}^d \overline H_i(t_i|
\Upsilon) \Biggr],\qquad  t_1,\ldots, t_d\in\mbb R^+,
\]
where $\overline H_i(\cdot|u)\dvtx\mbb R^+\to[0,1]$, $i=1,\ldots,
d$, $u\in
\mbb U^m$ denotes a collection of
survival functions, where $\mbb U^m$ denotes the image of the random
vector $\Upsilon$.
\end{Def*}

When we denote by $(T_1,\ldots, T_d)$ a random vector with joint
survival function~$\overline H$,
the condition in the definition can be phrased as the requirement that
there exists
a finite-dimensional random vector $\Upsilon$ such that, conditional on
$\Upsilon$, the random
variables $T_1,\ldots, T_d$ are mutually independent. In the context
of credit risk modelling, for example,
one may interpret
the vector $\Upsilon$ as the common factors driving the solvency of a
collection of $d$ companies
(such as economic environment, as opposed to idiosyncratic factors).

We remark that the terminology ``generalised frailty'' is extracted
from the theory of survival analysis (e.g., Kalbfleisch and
Prentice~\cite{KP}) in which \emph{frailty} refers to a common factor
driving the survival probabilities of the individual entities. One of
the commonly studied models is that of \emph{multiplicative frailty}
where the frailty appears as a multiplicative factor in the individual
hazard functions, in which case the conditional individual survival
functions $\overline H_i(\cdot|u)$ take the form $\overline
H_i(\cdot)^u$ for
$u\in\mbb R^+$.

Assume henceforth that $\overline H$ is a $d$-dimensional generalised
frailty survival function, and denote
the corresponding collection of conditional survival functions by
$\{\overline H_i(\cdot|u), i=1,\ldots, d, u\in\mbb U^m\}$ for some
$m\in
\mbb N$. A solution to the
multi-dimensional IFPT of the survival function $\overline H$ can be
constructed by application of the
construction that was used in Corollary~\ref{thm:IFPT} to the
conditional survival functions $\overline H_i(\cdot|u)$.
To formulate this result, let $\{X^{i|u}, i\in\{1,\ldots, d\}, u\in
\mbb U^m\}$ be a collection of
independent L\'{e}vy processes,
each satisfying Assumption~\ref{A1}, and
denote by
$\{\mu_{i}(\cdot|u), i\in\{1,\ldots, d\}, u\in\mbb U^m\}$ the
probability distributions that have Laplace transforms
$\WH\mu_{i}(\cdot|u)$ given by
\[
\WH\mu_{i}(\theta|u) = \frac{\bar\phi_{i|u}(-\lambda_{i|u})}%
{\bar\phi_{i|u}(-\lambda_{i|u}) + \theta}\cdot\Psi
^+_{i|u}(-\lambda _{i|u}, \ii\theta) \qquad\mbox{for some $
\lambda_{i|u}\in\bigl(0,\lambda^*_{i|u}\bigr]$},
\]
where $\bar\phi_{i|u}$, $\Psi^+_{i|u}$, $\lambda^*_{i|u}$ are the
corresponding
left-inverse of the Laplace exponent, positive Wiener--Hopf factor and
minimum of the Laplace exponent
of $X^{i|u}$, respectively.
Finally, let $\{I_i(\cdot|u), i\in\{1,\ldots, m\}, u\in\mbb U^m\}$
denote the collection of time-changes given by
\[
I_i(t|u) = - \frac{1}{\lambda_{i|u}} \log\overline
H_i(t|u),\qquad t\in\mbb R^+.
\]
The solution of the multi-dimensional IFPT is given as follows.

\begin{Thm} \label{thm:mIFPT}
It holds
\begin{eqnarray*}
P \bigl(\tau_0^{Y^1} > t_1,\ldots,
\tau_0^{Y^d} > t_d \bigr) &=& \overline
H(t_1,\ldots, t_d),\qquad t_1,\ldots,
t_d\in\mbb R^+,\mbox{ with}
\\
Y^{i}(t) &=&Y_0^{i|\Upsilon} + X^{i|\Upsilon}
\bigl(I_{i}(t|\Upsilon ) \bigr),\qquad  i=1,\ldots, d,
\end{eqnarray*}
where, conditional on $\Upsilon= u\in\mbb U^m$, the random variable
$Y_0^{i|u}$
follows the probability distribution $\mu_{i}(\cdot|u)$ and
is independent of
the vector $(X^{1|u},\ldots, X^{d|u})$ of L\'{e}vy processes.
\end{Thm}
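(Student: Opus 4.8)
\medskip
\noindent\textbf{Proof proposal for Theorem~\ref{thm:mIFPT}.}
The plan is to condition on the frailty vector $\Upsilon$, use conditional independence to factorise the joint survival probability into a product over the $d$ coordinates, and then invoke the one-dimensional result Corollary~\ref{thm:IFPT} in each coordinate. First I would fix a value $u\in\mbb U^m$ and pass to the regular conditional probability $P(\,\cdot\mid\Upsilon=u)$. Under this measure the construction decouples: conditionally on $\{\Upsilon=u\}$ the pairs $(Y_0^{i|u},X^{i|u})$, $i=1,\ldots,d$, are (by the construction underlying the statement) mutually independent, with $Y_0^{i|u}\sim\mu_i(\cdot\mid u)$ and $X^{i|u}$ a L\'evy process satisfying Assumption~\ref{A1}. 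Since $Y^i=Y_0^{i|u}+X^{i|u}\circ I_i(\cdot\mid u)$ is a measurable functional of $(Y_0^{i|u},X^{i|u})$ alone, and the first-passage time $\tau_0^{Y^i}$ is in turn a measurable functional of the path of $Y^i$, the random variables $\tau_0^{Y^1},\ldots,\tau_0^{Y^d}$ are independent under $P(\,\cdot\mid\Upsilon=u)$, and hence
\begin{equation*}
P\le(\tau_0^{Y^1}>t_1,\ldots,\tau_0^{Y^d}>t_d\mid\Upsilon=u\ri)=\prod_{i=1}^d P\le(\tau_0^{Y^i}>t_i\mid\Upsilon=u\ri).
\end{equation*}

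Next, for each fixed $i$ and $u$ the conditional law of $Y^i$ given $\{\Upsilon=u\}$ is exactly that of the process obtained by starting $X^{i|u}$ from its $\lambda_{i|u}$-invariant distribution $\mu_i(\cdot\mid u)$ --- which exists and is characterised by Theorem~\ref{thm:II} since $X^{i|u}$ satisfies Assumption~\ref{A1} --- and then applying the deterministic time-change $I_i(t\mid u)=-\lambda_{i|u}^{-1}\log\ovl H_i(t\mid u)$. This is precisely the set-up of Corollary~\ref{thm:IFPT}, applied to the process $X^{i|u}$, the invariant law $\mu_i(\cdot\mid u)$ and the (continuous) survival function $\ovl H_i(\cdot\mid u)$; it gives $P(\tau_0^{Y^i}>t_i\mid\Upsilon=u)=\ovl H_i(t_i\mid u)$. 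Substituting this into the factorisation above and taking the expectation over $\Upsilon$, using the generalised frailty representation of $\ovl H$, yields
\begin{equation*}
P\le(\tau_0^{Y^1}>t_1,\ldots,\tau_0^{Y^d}>t_d\ri)=E\le[\prod_{i=1}^d\ovl H_i(t_i\mid\Upsilon)\ri]=\ovl H(t_1,\ldots,t_d),
\end{equation*}
which is the assertion.

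The only real content beyond Corollary~\ref{thm:IFPT} is measure-theoretic bookkeeping, which is why this can be advertised as a short corollary of the one-dimensional theory. Concretely, one must realise all the objects $\{(Y_0^{i|u},X^{i|u}): i=1,\ldots,d,\ u\in\mbb U^m\}$ together with $\Upsilon$ on a single probability space in such a way that the stated conditional-independence structure genuinely holds; check that $u\mapsto P(\tau_0^{Y^i}>t_i\mid\Upsilon=u)$ is a measurable version of the conditional probability, so that the outer expectation over $\Upsilon$ is legitimate; and verify that the hypotheses required to invoke Corollary~\ref{thm:IFPT} are in force for each pair $(i,u)$ --- in particular that each conditional survival function $\ovl H_i(\cdot\mid u)$ is continuous, that $X^{i|u}$ satisfies Assumption~\ref{as:smooth} in addition to Assumption~\ref{A1}, and that the $\lambda_{i|u}$-invariant law $\mu_i(\cdot\mid u)$ meets the atom condition of Theorem~\ref{thm:I} in the case where only Assumption~\ref{as:smooth}(iii) holds. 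I would expect the conditional-independence factorisation to be the only step requiring genuine care; once it is in place the conclusion is immediate.
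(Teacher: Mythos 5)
Your proposal is correct and follows essentially the same route as the paper: condition on $\Upsilon$, factorise the joint survival probability by conditional independence, apply Corollary~\ref{thm:IFPT} coordinatewise, and integrate out $\Upsilon$ using the generalised frailty representation. The additional measure-theoretic caveats you list (measurability in $u$, continuity of $\ovl H_i(\cdot|u)$, the hypotheses of Theorem~\ref{thm:I}) are reasonable points of care that the paper passes over implicitly, but they do not change the argument.
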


\begin{pf}
By the tower property of conditional expectations and the fact that,
conditional on the random variable $\Upsilon$,
the set $\{Y^{i|\Upsilon}, i=1,\ldots, d\}$ forms a collection of
independent random variables,
we have for any vector $(t_1,\ldots, t_d)\in(\mbb R^+)^d$
\begin{eqnarray*}
 P \bigl(\tau_0^{Y^1} > t_1,\ldots,
\tau_0^{Y^d} > t_d \bigr) &=& E \Biggl[\prod
_{i=1}^d P \bigl(\tau_0^{Y^i}
> t_i |\Upsilon \bigr) \Biggr]
\\
& =& E \Biggl[\prod_{i=1}^d
P^{\mu_i(\cdot|\Upsilon)} \bigl(\tau _0^{X^{i|\Upsilon}} > I_i(t_i|
\Upsilon) \bigr) \Biggr] \\
&=& E \Biggl[\prod_{i=1}^d
\overline H_i(t_i|\Upsilon) \Biggr] = \overline
H(t_1, \ldots, t_d),
\end{eqnarray*}
where in the second line we used Corollary~\ref{thm:IFPT}.\
\end{pf}
%

\section{Wiener--Hopf factorization and first-passage times}\label{sec:prel0}
\subsection{Preliminaries}
In this subsection, we set the notation and recall some basic
results concerning the Wiener--Hopf factorization of $X$. We refer to
Sato (\cite{Sato}, Chapter~9), for a self-contained account of classical
Wiener--Hopf factorization theory of L\'{e}vy processes and further
references; see also Kuznetsov~\cite{Kuzn} for a recent derivation
using analytical arguments.

Denote by $\Psi$ the characteristic exponent of $X$, that is, the
unique map $\Psi\dvtx\mbb R\to\mbb C$ that satisfies $E[\exp\{\ii
\theta
X_t\}]=\exp\{t\Psi(\theta)\}$ for any $t\in\mbb R^+$.
According
to the L\'{e}vy--Khintchine formula, the characteristic exponent is
given by
%
\begin{equation}
\label{eq:kappa} \Psi(\theta) = \ii\eta\theta- \frac{\sigma^2}{2}
\theta^2 + \int_{\mathbb R}\bigl[\te{\ii\theta z} - 1 - \ii
\theta z\mathbf 1_{\{
|z|<1\}
}\bigr]\nu(\td z), \qquad\th\in\mbb R,
\end{equation}
where $\eta\in\mbb R$, $\sigma^2\in\mbb R^+$ is the variance of the
continuous martingale part of $X$,
and $\nu$ denotes the L\'{e}vy measure of $X$. Under Assumption~\ref
{A1}, the random variable $X_1$ has negative mean
and the L\'{e}vy measure $\nu$ of $X$ satisfies the condition (e.g.,
Sato~\cite{Sato}, Theorem~25.3)
%
\begin{equation}
\label{eq:levym} \int_{(1, \infty)}\te{\e x}\nu(\td x) <\infty.
\end{equation}
Furthermore,
$\Psi$ can be analytically extended to the interior of the strip
\[
\mathcal S=\bigl\{\th\in\mbb C\dvtx\Im(\theta)\in\Theta^o\cup\{0\}
\bigr\},
\]
where $\Im(\theta)$ denotes the imaginary part of $\theta$ and
where $\Theta^{o}$ is the interior of the set $\Theta=\{\theta\in
\mbb
R\dvtx\psi(\theta)<\infty\}$
which is a nonempty interval given Assumption~\ref{A1}; in the case
$\unl\theta= 0$
the exponent $\Psi$ also extends continuously to $\{\th\in\mbb
C\dvtx\Im
(\theta)=0\}$.
The extension of $\Psi$ to $\mathcal S$ will also be denoted by $\Psi$.
The characteristic exponent $\Psi$ is given in terms of the Laplace exponent
$\psi$ of $X$ by $\psi(\theta) = \Psi(-\ii\theta)$ for $\theta
\in\Theta$.

The probability distributions of the running supremum $X^*(t)$
and infimum $X_*(t)$ of $X$ up to time $t$
are related to the characteristic exponent $\Psi$ by
the Wiener--Hopf factorization
of $X$, which expresses $\Psi$ as the product of the Wiener--Hopf factors
$\Psi^+$ and $\Psi^-$
as follows:
%
\begin{equation}
\label{eq:WH} \frac{q}{q-\Psi(\theta)} = \Psi^+(q,\theta)
\Psi ^-(q,\theta),\qquad
\th\in\mbb R, q>0,
\end{equation}
with $\Psi^+(q,\theta)$ given in~\eqref{eq:Psi+def} and the function
$\Psi^-\dvtx(0,\infty)\times\mbb D^-\to\mbb C$ with $\mbb D^-=\{
u\in
\mbb
C\dvtx\Im(u)\leq0\}$,
given by $\Psi^-(q,\theta) = E[\exp\{\ii\theta X_*(e(q))\}]$ for
$\theta
\in\mbb D^-$,
where, as before, $e(q)$ denotes an independent exponential
random variable with mean $q^{-1}$ that is independent of $X$ (e.g.,
Sato~\cite{Sato}, Theorems 45.2, 45.7, Remark~45.9). The factorization
\eqref{eq:WH}
is a direct consequence of the \emph{probabilistic form
of the Wiener--Hopf factorization} of $X$, according to which (i)
$X^*(e(q))$ and $(X - X^*)(e(q))$
are independent and (ii) $(X - X^*)(e(q))$ and $X_*(e(q))$ have the
same distribution.

Since, as noted before, $X$ has negative mean under Assumption~\ref{A1},
$\lim_{t\to\infty} X^*_t$ is almost surely finite,
and Lebesgue's dominated convergence theorem implies that
$E[\exp(\ii\theta X^*_{e(q)})]\to E[\exp(\ii\theta X^*_{\infty})]$
so that $\Psi^+(0,\theta):=\lim_{q\searrow0}\Psi^+(q,\theta)$ is
well defined
and equal to $E[\exp(\ii\theta X^*_{\infty})]$.
It follows thus from the Wiener--Hopf factorization~\eqref{eq:WH} that
the limit
$\Psi^-(q,\theta)/q$ for $q\searrow0$ exists and is equal to
%
\begin{equation}\qquad
\label{eq:Psimd} \Psi^-(0,\theta)/0:=\lim_{q\downarrow0}q^{-1}
\Psi^-(q,\theta) = -\Psi(\theta)^{-1}\cdot\Psi^+(0,\theta)^{-1},\qquad
\theta\in\mbb R.
\end{equation}
The function $\Psi^+(q,\cdot)$ with $q\in\mbb R^+$ admits an analytical
extension to
the domain $\mathcal S^+:=\{\theta\in\mbb C\dvtx\Im(\theta)>
-\overline \theta\}$,
while the
function $\Psi^-(q,\cdot)/q$ with $q\in\mbb R^+$, may be extended
analytically to
$\mathcal S^-:=\{\theta\in\mbb C\dvtx\Im(\theta)\in(-\infty,
-\unl\theta
)\}$.
Denoting these analytical extensions also by $\Psi^+(q, \cdot)$ and
$\Psi^-(q, \cdot)/q$
the Wiener--Hopf factorization~\eqref{eq:WH} continues to hold for all
$\theta$ in the strip $\mathcal S$.

\subsection{Wiener--Hopf factorization under the Esscher-transform}
In order to establish that $\Psi^+(q,s)$ admits an analytical extension
in $q$
as stated in the \hyperref[sec:intro]{Introduction}, we first provide a
``change-of-variable'' formula
relating $\Psi^+$ to its counterparts under Esscher-transforms of $P$.
We recall that the \emph{Esscher transform} $P_x^{(\theta)}$ of the
probability measure $P_x$ for $x\in\mbb R^+$ and $\theta\in\Theta
:=\{
\theta\in\mbb R\dvtx\psi(\theta)<\infty\}$
is the probability measure that is absolutely continuous with respect
to $P_x$ with Radon--Nikodym derivative
on $\mathcal F_t$ given by
\[
\frac{\td P_x^{(\theta)}}{\td P_x}\bigg\vert _{\mathcal F_t} = \exp\bigl(
\th(X_t-x) - t \psi(\theta)\bigr),\qquad \theta\in\Theta, x\in\mbb R^+.
\]
Under the measure $P^{(\theta)}_x$, the process $X-X_0$
is still a L\'{e}vy process with a Laplace exponent
$\psi^{(\theta)}$ that is given in terms of $\psi$ by
%
\begin{equation}
\label{eq:psistar} \psi^{(\theta)}(s) = \psi(s+\theta) - \psi(\theta),\qquad s+\theta
\in\Theta,
\end{equation}
and with a positive Wiener--Hopf factor denoted by $\Psi^+_\theta$.

\begin{Lemma}\label{lem:chmeasure}
For any $q\in\mbb R^+$ and $\theta\in\Theta$ with $\psi(\theta)<
q$, we have
%
\begin{eqnarray}
\label{eq:chvar} \Psi^+(q, s) &=& \frac{\Psi^+_\theta(q-\psi(\theta),s + \ii\theta
)}{\Psi
^+_\theta(q-\psi(\theta), \ii\theta)},
\nonumber
\\[-8pt]
\\[-8pt]
\nonumber
  \Psi^-(q, s) &=&
\frac{\Psi^-_\theta(q-\psi(\theta),s + \ii\theta
)}{\Psi
^-_\theta(q-\psi(\theta), \ii\theta)},
\end{eqnarray}
for $s\in\mathcal S^+$ and $s\in\mathcal S^-$, respectively.
In particular, we have for any $q\in\mbb R^+$ and $\lambda\in
(0,\lambda^*]$
%
\begin{equation}
\label{eq:chvar2} \Psi^\pm(q, s) = \frac{\Psi^\pm_r(q + \lambda,s + \ii r)}{\Psi
^\pm
_r(q+\lambda, \ii r)},\qquad  r=\bar\phi(-
\lambda),
\end{equation}
for $s\in\mathcal S^+$ and $s\in\mathcal S^-$, respectively.
\end{Lemma}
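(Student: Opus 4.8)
The plan is to establish \eqref{eq:chvar} first and then deduce \eqref{eq:chvar2} as a special case. The proof of \eqref{eq:chvar} rests on a probabilistic identity: under the Esscher transform $P^{(\theta)}$, the process $X-X_0$ is a L\'evy process whose running supremum, evaluated at an independent exponential time, relates by a change-of-measure formula to the running supremum of $X$ under $P$. Concretely, for $q\in\mbb R^+$ and $\theta\in\Theta$ with $\psi(\theta)<q$, I would write
\begin{equation*}
\Psi^+_\theta(q-\psi(\theta),s) = \bE^{(\theta)}\!\le[\exp\{\ii s X^*_{e(q-\psi(\theta))}\}\ri],
\end{equation*}
where under $P^{(\theta)}$ the exponential clock has rate $q-\psi(\theta)$. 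The key step is to recognise that killing $X$ at an independent exponential rate $q-\psi(\theta)$ under $P^{(\theta)}$ produces, after undoing the Radon--Nikodym density $\exp(\theta X_t - t\psi(\theta))$, exactly the law of $X$ killed at rate $q$ under $P$, because the deterministic term $t\psi(\theta)$ in the density precisely compensates the shift in the killing rate. This is the standard fact that Esscher-transforming and exponential killing commute in the appropriate sense; see Sato~\cite[Ch.~9]{Sato}. Carrying out this substitution, the ratio on the right-hand side of \eqref{eq:chvar} arises because the first Wiener--Hopf factor is only defined up to the normalisation at $s=0$ (equivalently, one must divide by the total mass after the change of measure), and the imaginary shift $s\mapsto s+\ii\theta$ appears because multiplying the integrand by $\exp(\theta X^*_{e(\cdot)})$ corresponds under the characteristic-function representation to shifting the Fourier variable by $\ii\theta$. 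Since $X^*_{e(q)}\ge 0$ and $\theta$ may be negative, one must check that $\exp(\theta X^*_{e(q)})$ is integrable; when $\theta\ge 0$ this is the exponential moment condition~\eqref{eq:levym}, and when $\theta<0$ it is trivial as the exponent is then bounded by $1$. The analytic-continuation statement (validity for $s\in\mc S^+$ for the $+$ factor and $s\in\mc S^-$ for the $-$ factor) follows because both sides are analytic on the respective strips and agree on the real line, so the identity theorem applies; the analogous argument handles $\Psi^-$, using that $X_*\le 0$ and the factorization~\eqref{eq:WH}.

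For the second assertion \eqref{eq:chvar2}, specialise to $\theta=r:=\bar\phi(-\lambda)$ with $\lambda\in(0,\lambda^*]$. By the very definition of $\bar\phi$ in \eqref{eq:phiphi} we have $\psi(r)=-\lambda$, hence $q-\psi(\theta) = q-\psi(r) = q+\lambda$, and substituting $\theta=r$ and $\psi(\theta)=-\lambda$ into \eqref{eq:chvar} yields \eqref{eq:chvar2} for the $+$ sign directly. For the $-$ sign one must be slightly careful at the boundary case $\lambda=\lambda^*$, where $r=\theta^*$ may coincide with $\ovl\theta$ (the right-hand picture in Figure~\ref{fig:psi}): here the Esscher-transformed process $X-X_0$ under $P^{(r)}$ has Laplace exponent $\psi^{(r)}(s)=\psi(s+r)+\lambda$ with $\psi^{(r)}{}'(0+) = \psi'(\theta^*) = 0$, i.e.\ the transformed process is oscillating rather than drifting to $-\infty$, but its Wiener--Hopf factors are still well-defined and \eqref{eq:chvar} continues to hold by a limiting argument letting $\lambda\nearrow\lambda^*$ and invoking continuity of $\Psi^\pm_r(q+\lambda,\cdot)$ in $\lambda$.

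The main obstacle I anticipate is the rigorous justification of the ``commuting'' step --- that Esscher-transforming $P$ by $\theta$ and then killing at rate $q-\psi(\theta)$ gives the same supremum law (up to normalisation and Fourier shift) as killing $P$ at rate $q$. The cleanest route is to verify it at the level of the resolvent/potential measures: for a nonnegative Borel $f$,
\begin{equation*}
\bE^{(\theta)}\!\le[\int_0^\infty (q-\psi(\theta))\te{-(q-\psi(\theta))t} f(X^*_t)\,\td t\ri]
= \bE\!\le[\int_0^\infty q\,\te{-qt}\,\te{\theta X_t - t\psi(\theta)}\,\te{(q-\psi(\theta))t}\te{-qt}\,\cdots\ri],
\end{equation*}
where one tracks exponents carefully; this is routine but error-prone, so I would instead cite the corresponding statement in Sato~\cite[Rem.~45.9]{Sato} on the behaviour of Wiener--Hopf factors under Esscher transforms, or reduce to it. A secondary technical point is ensuring the shifted arguments $s+\ii\theta$ remain in the domains of analyticity $\mc S^+$ (resp.\ $\mc S^-$) of the transformed factors $\Psi^\pm_\theta$; since the Esscher transform shifts $(\unl\theta,\ovl\theta)$ to $(\unl\theta-\theta,\ovl\theta-\theta)$, the condition $\Im(s)>-\ovl\theta$ translates correctly, and this is a bookkeeping check rather than a real difficulty.
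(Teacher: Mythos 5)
Your proposal follows essentially the same route as the paper (Esscher transform, exponential killing, Fourier shift, normalisation at $s=0$), and the outline is correct; but the one step you yourself flag as the ``main obstacle'' is exactly the step your write-up does not close, and your heuristic for it is slightly off. The Radon--Nikodym density brings in $\exp(-\theta X_{e(\zeta)})$ (the \emph{terminal} value), not $\exp(\theta X^*_{e(\zeta)})$, so it is not literally ``multiplying the integrand by $\exp(\theta X^*)$''; and your displayed resolvent computation is left unfinished. The paper closes this in two lines without any citation: writing $\zeta=q-\psi(\theta)$, change measure to get
$\Psi^+(q,s)=\tfrac{q}{\zeta}\,E^{(\theta)}[\mathrm{e}^{-\theta X_{e(\zeta)}}\mathrm{e}^{\ii s X^*_{e(\zeta)}}]$,
then decompose $X_{e(\zeta)}=(X-X^*)_{e(\zeta)}+X^*_{e(\zeta)}$ and invoke the Wiener--Hopf independence of these two pieces at the independent exponential time. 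This single move produces \emph{both} the Fourier shift $s\mapsto s+\ii\theta$ (from the $\mathrm{e}^{-\theta X^*}$ part absorbed into the characteristic function) and the $s$-independent prefactor $\tfrac{q}{\zeta}\Psi^-_\theta(\zeta,\ii\theta)$, which equals $\Psi^+_\theta(\zeta,\ii\theta)^{-1}$ by the factorization \eqref{eq:WH} evaluated at $\ii\theta$ (using $\Psi_\theta(\ii\theta)=-\psi(\theta)$); alternatively your normalisation-at-$s=0$ shortcut identifies the constant once the proportionality is established. Two smaller points: the integrability of $\mathrm{e}^{-\theta X_{e(\zeta)}}$ under $P^{(\theta)}$ is automatic from $\theta\in\Theta$ and $\psi(\theta)<q$, and your limiting argument for $\lambda=\lambda^*$ in \eqref{eq:chvar2} is unnecessary --- $r=\theta^*$ lies in $\Theta$ with $\psi(\theta^*)=-\lambda^*<q$, so \eqref{eq:chvar} applies verbatim with $\theta=r$, regardless of whether the transformed process oscillates.
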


\begin{pf}
By changing measure from $P$ to $P^{(\theta)}$, we find with $\zeta
=q-\psi(\theta)$
\begin{eqnarray*}
\Psi^+(q, s) &=& \int_0^{\infty}q\te{-qt} E\bigl[
\te{\ii s X^*_t}\bigr]\,\td t = \frac{q}{\zeta} \int
_0^{\infty} \zeta\te{-\zeta t} E^{(\theta)}\bigl[
\te{-\theta X_t}\te{\ii s X^*_t}\bigr]\,\td t
\\
&=& \frac{q}{\zeta} E^{(\theta)}\bigl[\te{-\theta\bigl(X_{e(\zeta)}-X^*_{e(\zeta)}
\bigr)}\te{\ii (s+\ii \theta) X^*_{e(\zeta)}}\bigr]
\\
&=& \frac{q}{\zeta} E^{(\theta)}\bigl[\te{-\theta\bigl(X_{e(\zeta
)}-X^*_{e(\zeta
)}
\bigr)}\bigr] E^{(\theta)}\bigl[\te{\ii(s+\ii\theta) X^*_{e(\zeta)}}\bigr]
\\
&=& \frac
{q}{\zeta} \Psi^-_\theta(\zeta,\ii\theta)
\Psi^+_\theta(\zeta ,s+\ii\theta ) = \Psi_\theta^+(\zeta,\ii
\theta)^{-1}\Psi^+_\theta(\zeta,s+\ii \theta),
\end{eqnarray*}
where we used the probabilistic form of the Wiener--Hopf factorization
of $X$
and the form \eqref{eq:psistar} of $\psi_\theta$
in the third and fourth lines.
The identity concerning $\Psi^-$ is derived in an analogous manner.
Finally, equality~\eqref{eq:chvar2} follows by taking $\theta=r$
in \eqref{eq:chvar}.
\end{pf}

\begin{Lemma}\label{lem:anex}
The functions
$\Psi^+(u, v)$ and $\Psi^-(u, w)$
can be uniquely extended by analytical continuation and continuous extension
to the respective domains
\begin{eqnarray*}
 \mbb V_+&:= &\bigl\{(u,v)\in\mbb C^2\dvtx\Re(u) \ge-\lambda^*,
\Im(v) \geq -\theta^*\bigr\}\setminus\bigl\{\bigl(-\lambda^*,-\ii\theta^*\bigr)
\bigr\},
\\
 \mbb V_-&:=& \bigl\{(u,w)\in\mbb C^2\dvtx\Re(u) \ge-\lambda^*,
\Im(w) \leq 0\bigr\}.
\end{eqnarray*}
In particular, denoting these extensions again by $\Psi^+$ and $\Psi^-$
we have continuity in $\lambda$ of
$\Psi^+(-\lambda,\ii u)$ on $(0,\lambda^*]$
for each $u\in\mbb R^+$ and it holds
%
\begin{eqnarray}
\label{WHpid} \Psi^+(-\lambda,\ii u) &=& \frac{\Psi_r^+(0,\ii(u+r))}{\Psi
_r^+(0,\ii
r)}, \qquad r=\bar\phi(-
\lambda),\lambda\in\bigl(0,\lambda^*\bigr),
\\
\label{eq:WH2} \frac{\lambda}{\lambda+ \Psi(u)} &=&
\Psi^+(-\lambda,u)\Psi ^-(-\lambda,u),\qquad
\lambda\in(0,\lambda^*].
\end{eqnarray}
\end{Lemma}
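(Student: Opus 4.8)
The plan is to establish the analytic-extension-in-$q$ statement for $\Psi^+$ by combining the change-of-variable formula of Lemma~\ref{lem:chmeasure} with the already-available analytic extension in the spatial variable. Fix $\lambda\in(0,\lambda^*)$ and set $r=\bar\phi(-\lambda)\in(\unl\theta,\theta^*)$. Under the Esscher transform $P^{(r)}$ the Laplace exponent is $\psi^{(r)}(s)=\psi(s+r)-\psi(r)=\psi(s+r)+\lambda$, which is still the exponent of a genuine L\'evy process and, crucially, satisfies $\psi^{(r)\prime}(0+)=\psi'(r)<0$ (since $r<\theta^*$ and $\psi$ is strictly decreasing on $(\unl\theta,\theta^*)$), so $X$ has strictly negative mean under $P^{(r)}$ and hence $X^*_\infty<\infty$ $P^{(r)}$-a.s. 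Therefore $\Psi^+_r(0,\cdot)=\lim_{q\downarrow0}\Psi^+_r(q,\cdot)$ exists by dominated convergence, is non-vanishing on the relevant half-plane, and is analytic in the spatial variable on $\mc S^+_r:=\{v:\Im(v)>-\ovl\theta_r\}$ where $\ovl\theta_r=\ovl\theta-r>0$. Taking $q\downarrow0$ in \eqref{eq:chvar2} then yields
\[
\Psi^+(-\lambda,s)=\frac{\Psi^+_r(0,s+\ii r)}{\Psi^+_r(0,\ii r)},
\]
which defines $\Psi^+$ at the point $u=-\lambda$ for all admissible $s$; specializing $s=\ii u$, $u\in\mbb R^+$, gives \eqref{WHpid}. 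The map $\lambda\mapsto r=\bar\phi(-\lambda)$ is continuous on $(0,\lambda^*]$ and the right-hand side depends continuously (indeed real-analytically) on $r$ in the interior, which gives the asserted continuity of $\lambda\mapsto\Psi^+(-\lambda,\ii u)$ on $(0,\lambda^*)$; continuity up to the endpoint $\lambda^*$ (where $r\uparrow\theta^*$) is the delicate point discussed below.

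Next I would record the precise domain of analyticity. For each fixed $\lambda\in(0,\lambda^*)$ the formula above shows $u\mapsto\Psi^+(-\lambda,u)$ is analytic on $\{\Im(u)>-\ovl\theta\}$, in particular on $\{\Im(u)\ge-\theta^*\}$ since $\theta^*\le\ovl\theta$. To get joint analyticity in $(u,v)$ on the interior of $\mbb V_+$ I would view $(q,s)\mapsto\Psi^+(q,s)$ as originally jointly analytic on $\{\Re(q)>0\}\times\mc S^+$ (this is classical: $\Psi^+(q,s)=\int_0^\infty q\te{-qt}E[\te{\ii sX^*_t}]\td t$ is an absolutely convergent integral, analytic in each variable separately on that product domain, hence jointly analytic by Hartogs' theorem), and then use \eqref{eq:chvar} with a varying Esscher parameter $\theta$ to analytically continue across $\Re(q)=0$ down to $\Re(q)=-\lambda^*$: for any target $q_0$ with $\Re(q_0)\in(-\lambda^*,0)$ pick $\theta\in(\unl\theta,\theta^*)$ with $\psi(\theta)<\Re(q_0)$, so that $\Re(q_0-\psi(\theta))>0$ and the right-hand side of \eqref{eq:chvar} is jointly analytic in $(q,s)$ in a neighbourhood of $(q_0,s_0)$. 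Patching these local continuations (they agree on overlaps by the identity theorem, since they agree on $\{\Re(q)>0\}$) yields a single-valued joint analytic extension to the interior of $\mbb V_+$; continuous extension to the boundary follows from the dominated-convergence argument above wherever $X$ retains negative mean under the corresponding Esscher transform, i.e.\ everywhere on $\mbb V_+$ except possibly the excluded corner $(-\lambda^*,-\ii\theta^*)$. The analogous argument for $\Psi^-$ on $\mbb V_-$ is in fact easier because no spatial restriction beyond $\Im(w)\le0$ is needed. Finally, \eqref{eq:WH2} is obtained by taking the limit $q\to-\lambda$ in the Wiener-Hopf identity \eqref{eq:WH} (valid on $\mc S$ and, by the extension just constructed, on the larger domain), using that both factors have been shown to extend continuously and that $\lambda+\Psi(u)\neq0$ for $\lambda\in(0,\lambda^*)$ because $\Re(\Psi(u))\le\psi(\Im\text{-shift})\le-\lambda^*<-\lambda$ off the real axis and $\psi(u)\ge-\lambda^*>-\lambda$ on it; at $\lambda=\lambda^*$ one passes to the limit using the continuity in $\lambda$ away from the single exceptional point.

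The main obstacle I expect is the behaviour at the endpoint $\lambda=\lambda^*$, equivalently $r\uparrow\theta^*$. At $\theta^*$ the Esscher-transformed process has $\psi^{(\theta^*)\prime}(0+)=\psi'(\theta^*)=0$, so $X$ is \emph{oscillating} (mean zero) under $P^{(\theta^*)}$ and $X^*_\infty=+\infty$; hence $\Psi^+_{\theta^*}(0,\cdot)\equiv0$ and the naive $q\downarrow0$ limit in \eqref{eq:chvar2} becomes a $0/0$ indeterminacy. The resolution is that the \emph{ratio} $\Psi^+_r(0,s+\ii r)/\Psi^+_r(0,\ii r)$ stays bounded and converges as $r\uparrow\theta^*$: one must show that $\Psi^+_r(0,\ii v)\sim c(v)\cdot g(r)$ with a common vanishing factor $g(r)$ and a well-behaved $c(\cdot)$, so that the singular factor cancels in the quotient. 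Concretely I would use the representation $\Psi^+_r(0,\ii v)=\exp\bigl(-\int_0^\infty t^{-1}\int_{(0,\infty)}(1-\te{-vx})\,P^{(r)}(X_t\in\td x)\,\td t\bigr)$ (or the corresponding Wiener-Hopf exponent formula) and analyse how the integral degenerates as $\psi'(r)\uparrow0$; the cancellation in the ratio is exactly the statement that the \emph{relative} tail behaviour of the ascending ladder height stabilizes. This is where the real work lies; the rest is bookkeeping with Hartogs' theorem and the identity theorem. An alternative, perhaps cleaner, route to the endpoint is to define $\Psi^+(-\lambda^*,\cdot)$ directly as the monotone/continuous limit of $\Psi^+(-\lambda,\cdot)$ as $\lambda\uparrow\lambda^*$ and verify the limit exists and is finite using the probabilistic interpretation that under $P^{(r)}$ the ladder structure has a nondegenerate limit after the appropriate normalization; I would attempt the direct cancellation first and fall back on this limiting definition if the estimates become unwieldy.
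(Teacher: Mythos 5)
Your proposal rests on exactly the same core tool as the paper's proof, namely the Esscher change-of-variable formula of Lemma~\ref{lem:chmeasure}, and the derivation of \eqref{WHpid} and \eqref{eq:WH2} from \eqref{eq:chvar2} is the paper's argument as well. The one genuine structural difference is in how the analytic continuation in $q$ is set up. You continue locally, choosing a \emph{different} Esscher parameter $\theta=\theta(q_0)$ for each target $q_0$ with $\Re(q_0)>-\lambda^*$ so that $\Re(q_0-\psi(\theta))>0$, and then patch the local extensions via the identity theorem. The paper instead freezes $\theta=\theta^*$ once and for all and writes one global formula $L(q,s)=\Psi^+_{\theta^*}(q+\lambda^*,s+\ii\theta^*)/\Psi^+_{\theta^*}(q+\lambda^*,\ii\theta^*)$, which is manifestly holomorphic on $(\mbb V_+)^o$ (since $\Re(q+\lambda^*)>0$ there) and agrees with $\Psi^+$ on $\ovl D$ by Lemma~\ref{lem:chmeasure}; no patching is needed. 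The trade-off is exactly the one you worry about: the single-formula route pushes all the subtlety into showing that $L$ extends continuously to the boundary $\Re(q)=-\lambda^*$, where in the oscillating case $\psi'(\theta^*)=0$ both numerator and denominator of $L$ vanish, so one has to observe the cancellation of the common $q$-divergent factor in the Fristedt/Frullani exponent representation of $\Psi^+_{\theta^*}$. The paper simply asserts this continuity; you flag it explicitly and sketch the same cancellation, which is fair and perhaps more honest about where the work lies. Your patching avoids $0/0$ on the interior but, as you note, cannot reach the boundary either and you would have to run essentially the same limiting argument at $\lambda=\lambda^*$; so the two approaches hit the same wall at the same place. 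Two small remarks: the paper deduces \eqref{eq:WH2} not by taking a limit in \eqref{eq:WH} but by directly multiplying the two identities in \eqref{eq:chvar2} at $q=-\lambda$ and invoking the zero-rate formula \eqref{eq:Psimd} for $\Psi^-_r(0,\cdot)/0$, which avoids the need to verify $\lambda+\Psi(u)\neq0$ by hand; and your claimed inequality ``$\Re(\Psi(u))\le\psi(\Im\text{-shift})\le-\lambda^*$ off the real axis'' is not right as stated (the correct bound is $\Re\Psi(u)\le\psi(-\Im u)$, which is $\le0$ but not $\le-\lambda^*$), though this does not affect the main line of argument.
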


\begin{pf}
The Wiener--Hopf factor $\Psi^+(q,s)$ is well known to be holomorphic
and nonzero on the domain
$D:= \{(q,s)\in\mbb C^2\dvtx\Re(q)>0, \Im(s)> 0\}$ and continuous
on the
closure $\overline D$.
The identity in \eqref{eq:chvar} implies that at any $(q,s)\in
\overline D$
the power series in
$(q,s)$ of $\Psi^+(q,s)$ and $L(q,s):=\Psi^+_{\theta^*}(q-\psi
(\theta
^*),s + \ii\theta^*)/\Psi^+_{\theta^*}(q-\psi(\theta^*), \ii
\theta^*)$
are equal. Since $L$ is holomorphic on the interior of $\mbb V^+$ and
continuous on $\mbb V^+$,
it follows that the function $\Psi^+(q,s)$ can be uniquely extended by
analytical continuation and continuous extension
to the set $\mbb V_+$. In particular, it follows that the function
$\lambda\mapsto\Psi^+(-\lambda,\ii\theta)$ is continuous on
$(0,\lambda
^*]$, and we have consistency with \eqref{WHpid} by construction of the
extension.
The proof of the extension of $\Psi^-$ to $\mbb V^-$ is similar and omitted.
By multiplying the functions in \eqref{eq:chvar2} with $q=-\lambda$
and using the form of $\Psi^-_r(0,\theta)/0$ [see \eqref{eq:Psimd}],
it follows
that the product in the right-hand side of \eqref{eq:WH2} is equal to
$\{-\Psi_r(u + \ii r) \}^{-1} \{-\Psi_r(\ii r)\} = \lambda/[\Psi
(u)+\lambda]$
[in view of \eqref{eq:psistar}].
\end{pf}
%
\section{Proof of Theorem \texorpdfstring{\protect\ref{thm:II}}{2.6}}\label{sec:pfthmii}
We show first that $\WH\mu_\lambda$ is the Laplace transform of a
probability measure $\mu_\lambda$
and identify this measure in terms of the invariant distribution of the
reflected process
$Z:= X - X_*\wedge0$ (with $x\wedge0=\min\{x,0\}$ for $x\in\mbb R$)
under a certain Esscher transform.

We recall that, since $Z_t$ and $X^*_t$ have the same distribution
under $P_0$ for each fixed $t\ge0$
(by the time-reversal property of L\'{e}vy processes, e.g.,
Bertoin~\cite{Bertoin}, Proposition VI.3)
and, under Assumption~\ref{A1}, $X^*_t$ converges to an almost surely
finite limit $X^*_\infty$ as $t\to\infty$,
the limit $P(Z_\infty\in\td x):=\lim_{t\to\infty}P_0(Z_t\in\td x)$
is well defined and has characteristic function $\Psi^+(0,\theta) =
E[\exp(\ii\theta X^*_\infty)]$.
It is straightforward to verify that the measure $P(Z_\infty\in\td x)$
is the unique invariant probability
distribution of the reflected process $Z$.

For any $\lambda\in(0,\lambda^*)$, we specify the measure
$\mu_\lambda$ on $(\mbb R^+,\mathcal B(\mbb R^+))$ by
%
\begin{eqnarray}
\label{eq:mulambda}  \mu_\lambda(\td x) &= &c_r \cdot r\exp(-r
x)P^{(r)}(Z_\infty\leq x)\,\td x, \qquad x\in\mbb R^+, \mbox{ with}
\nonumber
\\[-8pt]
\\[-8pt]
\nonumber
 c_r&=&1/E^{(r)}\bigl[\exp(-r Z_\infty)\bigr],\qquad
r = \bar\phi(-\lambda ),
\nonumber
\end{eqnarray}
where $\bar\phi$ denotes the inverse of the Laplace exponent as
described above,
and where we used that the mean $E^{(r)}[X_1]$ of $X$ is strictly
negative under
$P^{(r)}$. Here, the normalising constant $c_r$ is
such that any of the measures $\mu_\lambda$ has unit mass. We also define
a measure $\mu_{\lambda^*}$ as the limit in distribution of $\mu
_\lambda
$ for
$\lambda\nearrow\lambda^*$ [the existence of this limit is verified in
Lemma~\ref{lem:LT}(ii)].
We next verify that the function $\WH\mu_\lambda$ defined in \eqref{eq:mul}
is equal to the Laplace transform of the measure
$\mu_\lambda$.

\begin{Lemma}\label{lem:LT}
\textup{{(i)}} For any $\lambda\in(0,\lambda^*)$,
the Laplace transform of $\mu_\lambda$
is given by
%
\begin{eqnarray}\qquad
\label{eq:mul2}  \int_0^\infty\te{-\theta x}
\mu_\lambda(\td x) &=& \frac{\bar
\phi
(-\lambda)}%
{\bar\phi(-\lambda) + \theta}\cdot\frac{\Psi_r^+(0,\ii(\theta
+r))}{\Psi
_r^+(0,\ii r)}, \qquad r=\bar
\phi(-\lambda)\quad \mbox{and}
\\
 \Psi^+(-\lambda,\ii\theta)& =& \frac{\Psi_r^+(0,\ii(\theta
+r))}{\Psi
_r^+(0,\ii r)}, \label{eq:Psipl}
\end{eqnarray}
where $\bar\phi$ denotes the inverse of the Laplace exponent as
described above.

\textup{{(ii)}} $\WH\mu_{\lambda^*}:= \lim_{\lambda\nearrow\lambda
^*}\WH\mu
_\lambda$ is the Laplace transform of
a probability measure and
%
\begin{equation}
\label{mulambdastar} \WH\mu_{\lambda^*} = \frac{\bar\phi(-\lambda^*)}%
{\bar\phi(-\lambda^*) + \theta}\cdot\Psi^+\bigl(-
\lambda^*,\ii\theta\bigr).
\end{equation}
\end{Lemma}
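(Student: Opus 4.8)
The plan is to prove Lemma~\ref{lem:LT} in two parts, first establishing the Laplace-transform identity \eqref{eq:mul2}–\eqref{eq:Psipl} for $\lambda\in(0,\lambda^*)$, and then obtaining the limiting case $\lambda=\lambda^*$ by a continuity/tightness argument.

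For part (i), the starting point is the change-of-measure formula \eqref{WHpid} from Lemma~\ref{lem:anex}, which already gives $\Psi^+(-\lambda,\ii u)=\Psi_r^+(0,\ii(u+r))/\Psi_r^+(0,\ii r)$ with $r=\bar\phi(-\lambda)$; this is exactly \eqref{eq:Psipl}, so nothing new is needed there. The real content is \eqref{eq:mul2}. First I would compute the Laplace transform of $\mu_\lambda$ directly from its definition \eqref{eq:mulambda}. Writing $\mu_\lambda(\td x)=c_r\, r\,\te{-rx}P^{(r)}(Z_\infty\le x)\,\td x$, I would integrate $\te{-\theta x}$ against this density and use Fubini (or integration by parts) to express $\int_0^\infty \te{-(\theta+r)x}P^{(r)}(Z_\infty\le x)\,\td x$ in terms of $E^{(r)}[\te{-(\theta+r)Z_\infty}]$. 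Concretely, since $P^{(r)}(Z_\infty\le x)=\int_{[0,x]}P^{(r)}(Z_\infty\in\td y)$, swapping the order of integration gives $\int_0^\infty \te{-(\theta+r)x}P^{(r)}(Z_\infty\le x)\,\td x = \frac{1}{\theta+r}E^{(r)}[\te{-(\theta+r)Z_\infty}]$. Therefore the Laplace transform of $\mu_\lambda$ equals
\[
c_r\, r\cdot\frac{1}{\theta+r}\,E^{(r)}\!\big[\te{-(\theta+r)Z_\infty}\big]
= \frac{r}{\theta+r}\cdot\frac{E^{(r)}[\te{-(\theta+r)Z_\infty}]}{E^{(r)}[\te{-rZ_\infty}]},
\]
using $c_r=1/E^{(r)}[\te{-rZ_\infty}]$. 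Now the key identification: under $P^{(r)}$ the running supremum limit $Z_\infty$ (equivalently $X^*_\infty$) has characteristic function $\Psi_r^+(0,\cdot)$, so $E^{(r)}[\te{-(\theta+r)Z_\infty}] = \Psi_r^+(0,\ii(\theta+r))$ and $E^{(r)}[\te{-rZ_\infty}]=\Psi_r^+(0,\ii r)$. Substituting and recalling $r=\bar\phi(-\lambda)$ yields precisely \eqref{eq:mul2}. One has to check the evaluations of $\Psi_r^+(0,\cdot)$ at these imaginary arguments are legitimate — this is where the analytic-extension domain $\mathcal S^+=\{\Im(\theta)>-\bar\theta_r\}$ of $\Psi_r^+$ matters, and one verifies $r$ and $\theta+r$ lie in the appropriate range using $r=\bar\phi(-\lambda)\in(\underline\theta,\theta^*)$ together with \eqref{eq:psistar}.

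For part (ii), $\lambda=\lambda^*$, I would argue by taking $\lambda\nearrow\lambda^*$ in \eqref{eq:mul}. As $\lambda\uparrow\lambda^*$ we have $r=\bar\phi(-\lambda)\uparrow\theta^*$, and by the continuity of $\Psi^+(-\lambda,\ii\theta)$ on $(0,\lambda^*]$ established in Lemma~\ref{lem:anex}, the right-hand side of \eqref{eq:mul} converges pointwise to $\frac{\bar\phi(-\lambda^*)}{\bar\phi(-\lambda^*)+\theta}\Psi^+(-\lambda^*,\ii\theta)$, which is continuous at $\theta=0$ with value $1$. By the continuity theorem for Laplace transforms (Feller / Lévy continuity on $\mathbb R^+$), pointwise convergence of the Laplace transforms $\WH\mu_\lambda(\theta)\to\WH\mu_{\lambda^*}(\theta)$ for all $\theta\ge 0$ to a limit that is continuous at $0$ implies $\mu_\lambda$ converges weakly to a probability measure $\mu_{\lambda^*}$ whose Laplace transform is the limit, giving \eqref{mulambdastar}. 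The main obstacle here is making sure the limit function is genuinely a Laplace transform of a probability measure rather than a subprobability; this is handled by checking the value at $\theta=0$ equals $1$, i.e. that $\bar\phi(-\lambda^*)$ is finite and nonzero. Since $\theta^*\in[\underline\theta,\bar\theta]$ and in the relevant case $\theta^*$ is an interior point of the domain solving $\psi'(\theta)=0$ (or else $\theta^*=\bar\theta$, the right-hand figure case), $\bar\phi(-\lambda^*)=\theta^*>0$ is finite and positive, so the evaluation at $\theta=0$ gives $1$ and the limit is a bona fide probability Laplace transform. A minor point requiring care is the degenerate subcase where $\theta^*=\bar\theta$ (so $\psi'(\theta^*)\ne 0$): here one must confirm that $\Psi^+(-\lambda^*,\ii\theta)$ still extends continuously — but this is exactly the content of Lemma~\ref{lem:anex}, with the single excluded point $(-\lambda^*,-\ii\theta^*)$ corresponding to $\theta=0$ being the only place where trouble could arise, and there the prefactor $\bar\phi(-\lambda^*)/(\bar\phi(-\lambda^*)+\theta)$ is simply $1$, so no cancellation issue occurs.

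**The hardest step** I anticipate is not the algebra of part (i) but rather the bookkeeping around the analytic-continuation domains: verifying that all the evaluations $\Psi_r^+(0,\ii(\theta+r))$, $\Psi_r^+(0,\ii r)$, and the limits as $r\to\theta^*$ stay inside the regions where Lemmas~\ref{lem:anex} and~\ref{lem:chmeasure} guarantee holomorphy and continuity — in particular handling the boundary case $r=\theta^*$, $\lambda=\lambda^*$ where one sits on the edge of $\mathbb V_+$ and must avoid the excluded point. Everything else reduces to Fubini and the continuity theorem.
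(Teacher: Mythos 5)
Your proof is correct and follows essentially the same route as the paper: part (i) is the paper's computation (the paper reads \eqref{eq:mulambda} as a convolution of an $\mathrm{Exp}(r)$ law with the tilted law of $Z_\infty$; you carry out the same calculation via Fubini), with the same identification of $\Psi_r^+(0,\ii u)=E^{(r)}[\te{-uZ_\infty}]$, and part (ii) is the paper's continuity-theorem argument verbatim. One small slip in a side remark of part (ii): the excluded point $(-\lambda^*,-\ii\theta^*)$ of $\mbb V_+$ would correspond to the argument $\ii\theta=-\ii\theta^*$, i.e.\ $\theta=-\theta^*<0$, not $\theta=0$; since for $\theta\in\mbb R^+$ one has $\Im(\ii\theta)=\theta\ge 0>-\theta^*$, the excluded point is never approached at all, so your conclusion stands but for this reason rather than the prefactor cancellation you invoke.
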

\begin{pf}
(i) It is straightforward to verify from \eqref{eq:mulambda} that $\mu
_\lambda$
is equal to a convolution
%
\begin{eqnarray}
\label{eq:mu22} \mu_\lambda(\td x) = c_r \int
_{[0,x]} r\exp\bigl(-r (x-y)\bigr)E^{(r)} \bigl[\exp(-r
Z_\infty) I_{\{Z_\infty\in\td y\}} \bigr]\,\td x,
\nonumber
\\[-8pt]
\\[-8pt]
\eqntext{x\in\mbb R^+, r = \bar\phi(-
\lambda),}
\end{eqnarray}
so that we obtain the expression \eqref{eq:mul} by taking Laplace
transform in $x$ in \eqref{eq:mu22}.
Equation \eqref{eq:Psipl} directly follows from Lemma~\ref{lem:anex}.

(ii)
As $\bar\phi(-\lambda)$ and $\lambda\mapsto\Psi^+(-\lambda,\ii
\theta)$
are continuous on $(0,\lambda^*]$,
we have thus from \eqref{eq:mul} and \eqref{WHpid} that $\WH\mu
_\lambda
(\theta)$
converges to the expression on the right-hand side of \eqref
{mulambdastar} as $\lambda\nearrow\lambda^*$, for any $\theta\in
\mbb R^+$.
Since $\WH\mu_{\lambda^*}(0)\to1$ when $\theta\searrow0$, the
continuity theorem ({e.g.},
Feller~\cite{Feller}, Theorem XIII.1.2) implies that $\WH\mu
_{\lambda
^*}$ is
the Laplace transform of a probability measure, $\mu_{\lambda^*}$ say,
and $\mu_\lambda$ converges weakly to~$\mu_{\lambda^*}$.
\end{pf}

We next establish for any $\lambda\in(0,\lambda^*]$
the $\lambda$-invariance of the measure $\mu_\lambda$ for the killed
process $\{X_t, t<\tau^X_0\}$
by deriving the joint asymptotic distribution of $Z_t$ and $X_*(t)$,
under the initial distribution $\mu_\lambda$, as $t$ tends to infinity,
conditional on $X_*(t)$ being positive, and identifying the asymptotic
marginal distribution corresponding to $X_*(t)$ as exponential.

\begin{Prop}\label{prop:linv} Let $\lambda\in(0,\lambda^*]$, $r =
\bar
\phi(-\lambda)$ and $\theta,\eta\in\mbb R^+$.

\textup{{(i)}} As $t\to\infty$, $P^{\mu_\lambda}(X_*(t) \ge y| X_*(t)
\ge
0)\to
\te{-r y}$ for $y\in\mbb R^+$, and we have
%
\begin{eqnarray}
\label{cond.conv} E^{\mu_\lambda}\bigl[\te{-\theta Z_t - \eta X_*(t)}|
X_*(t) \ge0\bigr] &\longrightarrow& \Psi^+(-\lambda,\theta) \cdot
\frac{\bar\phi
(-\lambda
)}{\bar\phi(-\lambda) + \eta+ \theta},
\\
\te{\lambda t} P^{\mu_\lambda}\bigl(X_*(t) \ge0\bigr) &\longrightarrow& 1.
\label{cond.conv2}
\end{eqnarray}

\textup{{(ii)}} The probability measure $\mu_\lambda$ is a $\lambda$-invariant
distribution for $\{X_t, t<\tau^X_0\}$.
\end{Prop}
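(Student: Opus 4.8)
The plan is to prove the two parts of Proposition~\ref{prop:linv} in sequence, deriving (ii) as a fairly direct consequence of the asymptotic independence statements in (i).

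\textbf{Step 1: Reduce to an Esscher-transformed computation.} Under $P^{\mu_\lambda}$ the initial point $X_0$ has law $\mu_\lambda$ given by \eqref{eq:mulambda}. I would condition on $X_0 = x$ and write, with $r=\bar\phi(-\lambda)$,
\[
E^{\mu_\lambda}\!\le[\te{-\theta Z_t - \eta X_*(t)}\mbf 1_{\{X_*(t)\ge 0\}}\ri]
= \int_0^\infty \mu_\lambda(\td x)\, E_0\!\le[\te{-\theta Z_t - \eta(x+X_*(t))}\mbf 1_{\{X_*(t)\ge -x\}}\ri],
\]
where I have used that $X_t - X_0$ under $P_x$ is a copy of $X$ under $P_0$, that $X_*(t)$ under $P_x$ equals $x + X_*(t)$ under $P_0$, and that on $\{X_*(t)\ge -x\}$ the running infimum part has not been ``clipped'' so $Z_t = X_t - X_*(t)\wedge 0$ agrees with the unclipped quantity. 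Because $\mu_\lambda$ has a density proportional to $\te{-rx}P^{(r)}(Z_\infty\le x)$, the factor $\te{-rx}$ is exactly an Esscher weight; combining the $\te{-rx}$ from $\mu_\lambda$ with the expectation and changing measure from $P_0$ to $P_0^{(r)}$ via the Radon--Nikodym density $\exp(rX_t - t\psi(r)) = \exp(rX_t + \lambda t)$ (since $\psi(r) = -\lambda$), I expect the whole expression to collapse to $\te{-\lambda t}$ times an expectation under $P^{(r)}$ of a bounded functional of $(Z_t, X_*(t))$ against $P^{(r)}(Z_\infty\in\td y)$, with the normalising constant $c_r$ absorbing the leftover.

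\textbf{Step 2: Pass to the limit using recurrence of the reflected process.} Under $P^{(r)}$ the process $X$ still has strictly negative mean (this is noted in the text just after \eqref{eq:mulambda}, and follows because $r<\theta^*$ so $\psi'(r)<0$, hence $(\psi^{(r)})'(0+)=\psi'(r)<0$), so $X^*_t\to X^*_\infty$ a.s. and, by time-reversal, $Z_t\Rightarrow Z_\infty$ under $P^{(r)}$. Moreover $X_*(t)\to-\infty$ under $P^{(r)}$. The key analytic point is that after the change of measure the surviving mass, renormalised by $\te{\lambda t}$, has a finite limit, and that in the limit $Z_t$ decouples from $X_*(t)$ with $Z_t$ converging to its stationary law $P^{(r)}(Z_\infty\in\td y)$ while the (rescaled) conditional law of $X_*(t)$ on $\{X_*(t)\ge 0\}$ becomes $\mathrm{Exp}(r)$. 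Concretely I would show
\[
\te{\lambda t}E^{\mu_\lambda}\!\le[\te{-\theta Z_t - \eta X_*(t)}\mbf 1_{\{X_*(t)\ge 0\}}\ri]
\longrightarrow \frac{r}{r+\eta}\cdot\frac{E^{(r)}[\te{-\theta Z_\infty}\te{-rZ_\infty}]}{E^{(r)}[\te{-rZ_\infty}]}\cdot\ (\text{and the }\eta=\theta=0\text{ case gives }1),
\]
and then identify the $Z_\infty$-factor with $\Psi^+(-\lambda,\theta)$ via \eqref{eq:Psipl}: indeed $E^{(r)}[\te{-\theta Z_\infty}\te{-rZ_\infty}]/E^{(r)}[\te{-rZ_\infty}] = \Psi_r^+(0,\ii(\theta+r))/\Psi_r^+(0,\ii r) = \Psi^+(-\lambda,\ii\theta)$, matching \eqref{cond.conv}. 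Taking $\theta=\eta=0$ yields \eqref{cond.conv2}; dividing gives the stated conditional limits, in particular $P^{\mu_\lambda}(X_*(t)\ge y\mid X_*(t)\ge 0)\to\te{-ry}$. The main obstacle is rigorously justifying the limit interchange: one needs uniform integrability / dominated-convergence control on the Esscher-reweighted functional and the fact that the $\te{\lambda t}$-renormalised survival probability converges to a strictly positive constant rather than to $0$ or $\infty$; this is where recurrence of $Z$ under $P^{(r)}$ and the precise choice of $r=\bar\phi(-\lambda)$ (so that the Esscher exponent kills exactly the exponential decay rate) do the work, and for $\lambda=\lambda^*$ one passes to the limit $\lambda\nearrow\lambda^*$ using Lemma~\ref{lem:LT}(ii).

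\textbf{Step 3: Deduce $\lambda$-invariance.} Given (i), for any $\theta\ge 0$ and $t>0$ I would compute, using the Markov property of $X$ at time $t$ on the event $\{t<\tau^X_0\} = \{X_*(t)\ge 0\}$ (mod a $P^{\mu_\lambda}$-null set, as in Lemma~\ref{lem:Khat}),
\[
E^{\mu_\lambda}\!\le[\te{-\theta X_t}\mbf 1_{\{t<\tau^X_0\}}\ri]
= E^{\mu_\lambda}\!\le[\te{-\theta(Z_t + X_*(t))}\mbf 1_{\{X_*(t)\ge 0\}}\ri]
= \te{-\lambda t}\cdot \Psi^+(-\lambda,\ii\theta)\cdot\frac{\bar\phi(-\lambda)}{\bar\phi(-\lambda)+\theta} + o(\te{-\lambda t}),
\]
but this must be an \emph{exact} identity, not merely asymptotic. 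The cleanest route is: the left-hand side as a function of $t$ satisfies the semigroup relation $g(t+s) = \int g\text{-transition}$; more directly, one invokes Proposition~\ref{lem2}(ii), which says $\mu$ is $\lambda$-invariant iff $\WH\mu$ solves \eqref{eq:id}, and verifies \eqref{eq:id} for $\WH\mu_\lambda$ using the Wiener--Hopf identity \eqref{eq:WH2} together with the explicit form \eqref{eq:mul}. Alternatively, and matching the asymptotic flavour of part (i): the quantity $m_t := E^{\mu_\lambda}[\te{-\theta X_t}\mbf 1_{\{t<\tau^X_0\}}]$ satisfies $m_{t+s} = E^{\mu_\lambda}[\mbf 1_{\{t<\tau^X_0\}} K(X_t)]$ for an appropriate kernel, and combined with the fact (from (i)) that the conditional law of $X_t$ given $\{t<\tau^X_0\}$ converges, one shows the limiting conditional law must itself be a fixed point, forcing $P^{\mu_\lambda}(X_t\in\cdot\,,\ t<\tau^X_0) = \te{-\lambda t}\mu_\lambda$ for \emph{all} $t$, which is exactly \eqref{eq:linv}. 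I would present the Proposition~\ref{lem2}(ii) route as the rigorous one and note that part (i) provides the conceptual picture (asymptotic independence of overshoot-from-infimum and infimum, with exponential infimum), with the verification of \eqref{eq:id} via \eqref{eq:WH2} being a short computation rather than an obstacle.
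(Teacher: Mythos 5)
Your part (i) is essentially the paper's argument: condition on $X_0=x$, use the factor $\te{-rx}$ in the density \eqref{eq:mulambda} to perform the Esscher change of measure to $P^{(r)}$ with $r=\bar\phi(-\lambda)$ (so that $\te{rX_t-t\psi(r)}=\te{rX_t+\lambda t}$ extracts exactly the rate $\te{-\lambda t}$), apply Fubini and dominated convergence using positive recurrence of $Z$ under $P^{(r)}$, and identify the limiting $Z$-factor $E^{(r)}[\te{-(\theta+r)Z_\infty}]/E^{(r)}[\te{-rZ_\infty}]$ with $\Psi^+(-\lambda,\ii\theta)$ via \eqref{WHpid}; this reproduces \eqref{conv.limit} and hence \eqref{cond.conv}--\eqref{cond.conv2}. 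One caveat: at $\lambda=\lambda^*$ the paper does not ``pass to the limit $\lambda\nearrow\lambda^*$'' in the convergence statements (interchanging the $t\to\infty$ and $\lambda\nearrow\lambda^*$ limits is not justified a priori); it reruns the same computation with the initial measure replaced throughout by the weak limit $\mu_{\lambda^*}$ obtained in Lemma~\ref{lem:LT}(ii).

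For part (ii) you offer two routes and designate the wrong one as ``the rigorous one''. Verifying the integral equation \eqref{eq:id} of Proposition~\ref{lem2}(ii) for $\WH\mu_\lambda$ is \emph{not} a short computation for a general L\'evy process: one must first check the decay hypothesis \eqref{eq:CC}, and the Bromwich integral of $\WH\mu_\lambda(-u)\Psi^-(q,-\ii u)/(u+\theta)$ cannot in general be evaluated by closing the contour, since $\WH\mu_\lambda(-u)$ is analytic only for $\Re(u)<\bar\phi(-\lambda)$ and $\Psi^-(q,\cdot)$ has no useful global analytic structure. The paper carries out this verification only for mixed-exponential processes (Appendix~\ref{ssec:resd}), where all factors are rational. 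The route the paper actually makes rigorous is the one you relegate to an ``alternative'': by the Markov property at time $s$,
\begin{equation*}
E^{\mu_\lambda}\le[\te{-\theta X_{t+s}}\mbf 1_{\{X_*(t+s)\ge 0\}}\ri]
=\int_{\mbb R^+} E_x\le[\te{-\theta X_t}\mbf 1_{\{X_*(t)\ge 0\}}\ri]\,P^{\mu_\lambda}(X_s\in\td x,\,X_*(s)\ge 0),
\end{equation*}
so multiplying by $\te{\lambda s}$ and letting $s\to\infty$, and using \eqref{cond.conv2} together with the convergence of the conditional law of $X_s$ given survival to $\mu_\lambda$ (this is \eqref{cond.conv} with $\eta=\theta$ plus the Continuity Theorem, extended to the possibly discontinuous test function $x\mapsto E_x[\te{-\theta X_t}\mbf 1_{\{X_*(t)\ge0\}}]$ via the Skorokhod representation theorem and the absolute continuity of $\mu_\lambda$), one obtains the \emph{exact} identity $E^{\mu_\lambda}[\te{-\theta X_t}\mbf 1_{\{t<\tau^X_0\}}]=\te{-\lambda t}\WH\mu_\lambda(\theta)$; this is \eqref{pfexist}. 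You should promote that argument to the main proof and supply the two justifications just indicated; as written, the step you actually rely on is deferred to a computation that does not obviously close.
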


\begin{pf}
(i) We consider first the case $\lambda\in(0,\lambda^*)$.
We find by inserting the definition \eqref{eq:mulambda} of $\mu
_\lambda$,
changing measure from $P$ to the Esscher transform $P^{(r)}$
and interchanging the order of integration (justified by Fubini's theorem)
%
\begin{eqnarray}\label{conv.limit}
&&E^{\mu_\lambda}\bigl[\te{-\theta Z_t-\eta
X_*(t)}\mathbf 1_{\{
X_*(t)\ge0\}}\bigr]\nonumber
\\
\nonumber
&&\qquad= \int_{\mbb R^+} \int_{[0,x]} r
\te{-rx} c_r P^{(r)}(Z_\infty\in\td y)
E_0\bigl[\te{-\theta(Z_t+x) -\eta\bigl(X_*(t)+x\bigr)}
\mathbf 1_{\{X_*(t)\ge-x\}
}\bigr]\,\td x
\\
&&\qquad= c_r \int_{\mbb R^+} \int
_{[0,x]} r\te{-(r+\eta+ \theta) x} P^{(r)}(Z_\infty
\in\td y) \te{-\lambda t} \nonumber\\
&&\hspace*{83pt}{}\times E^{(r)}_0\bigl[\te{-(\theta+r)
Z_t - (\eta +r)X_*(t)}\mathbf 1_{\{X_*(t)\ge-x\}}\bigr] \,\td x
\nonumber\\
&&\qquad= \te{-\lambda t}\\
&&\qquad\quad{}\times c_r \int_{\mbb R^+}
\int_{y}^\infty r\te {-(r+\eta+\theta) x}\nonumber\\
&&\hspace*{93pt}{}\times E^{(r)}_0\bigl[\te{-(\theta+ r) Z_t - (
\eta+r)X_*(t)}\mathbf 1_{\{
X_*(t)\ge
-x\}}\bigr]\,\td x\, P^{(r)}(Z_\infty
\in\td y)
\nonumber\\
 &&\qquad= \te{-\lambda t}\cdot\frac{r}{r+\eta+ \theta}\nonumber\\
 &&\qquad\quad{}\times \int
_{\mbb
R^+} c_r E^{(r)}_0\bigl[
\te{-(\theta+r)Z_t - (\eta+r)\bigl\{\bigl(y+X_*(t)\bigr)\vee0\bigr\}}
\bigr] P^{(r)}(Z_\infty\in\td y),\nonumber
\end{eqnarray}
for $\theta\in\mbb R^+$, with $x\vee0=\max\{x,0\}$ for $x\in\mbb R^+$
and, as before,
$c_r= 1/E^{(r)}\times\break [\exp(-r Z_\infty)]$ and $r = \bar\phi(-\lambda)$.
Since the integrand in
\eqref{conv.limit} tends to $c_rE_0^{(r)}\times\break [\te{-(r+\theta)Z_\infty}]$
when $t$ tends to infinity [which is equal to $\Psi^+(-\lambda,\ii
\theta
)$ by~\eqref{WHpid}],
we deduce by an application of Lebesgue's dominated convergence theorem
that also the integral tends to this constant.
Taking $\eta= \theta=0$ in \eqref{conv.limit}
yields~\eqref{cond.conv2}, and subsequently dividing \eqref
{conv.limit} by
$P^{\mu_\lambda}(X_*(t) \ge0)$ yields~\eqref{cond.conv}.
Finally, we note that the first assertion in (i) is a direct
consequence of the continuity theorem
(e.g., Feller~\cite{Feller}, Theorem XIII.1.2)
and \eqref{cond.conv} (with $\theta=0$).

The case $\lambda=\lambda^*$ can be treated by following the line of
reasoning in the previous paragraph,
replacing throughout the measure
$\mathbf 1_{\mbb R^+}(y) c_r \te{-r y}\times\break P^r(Z_\infty\in\td y)$ by the one
with Laplace
transform $\Psi^+(-\lambda^*,\ii\theta)$.

(ii) The continuity theorem and \eqref{cond.conv} (with $\eta=0$)
implies that we have
$E^{\mu_\lambda}[f(X_t)| X_*(t) \ge0] \to\int_{\mbb R^+} f(x) \mu
_\lambda(\td x)$ as $t\to\infty$
for any continuous\break bounded function $f$ on $\mbb R^+$.
The Skorokhod embedding theorem implies that this convergence remains
valid for
any function $f$ that is bounded and continuous on
$\mbb R^+\setminus C$ with $C$ a countable set, which satisfies $\mu
_\lambda(C)=0$ by
absolutely continuity of
$\mu_\lambda$. Thus, by the Markov property and \eqref{cond.conv2} we
have for $t, \theta\in\mbb R^+$
%
\begin{eqnarray}
\label{pfexist}&& E^{\mu_\lambda}\bigl[\te{-\theta X_t}\mathbf
1_{\{X_*(t)\ge0\}}\bigr]\nonumber\\
&&\qquad= \lim_{s\to\infty} \te{\lambda s} \int
_{\mbb R^+} E_x\bigl[\te {-\theta X_t}\mathbf
1_{\{X_*(t)\ge0\}}\bigr] P^{\mu_\lambda}\bigl(X_s\in\td x, X_*(s)\ge0
\bigr)
\nonumber
\\[-8pt]
\\[-8pt]
\nonumber
&&\qquad= \lim_{s\to\infty} \te{\lambda s} E^{\mu_\lambda}\bigl[
\te{-\theta X_{t+s}}\mathbf 1_{\{X_*(t+s)\ge0\}}\bigr]
\\
&&\qquad= \te{-\lambda t} \lim_{s\to\infty} E^{\mu_\lambda}\bigl[\te {-
\theta X_{t+s}}|X_*(t+s)\ge0\bigr] = \te{-\lambda t} \cdot \WH
\mu_\lambda(\theta).
\nonumber
\end{eqnarray}
Inverting Laplace transforms on the left-hand side and right-hand side
of \eqref{pfexist}
shows that the measure $\mu_\lambda$ satisfies \eqref{eq:linv}
in Definition~\ref{def:linv}, and the proof is complete.
\end{pf}

With the above results in hand, we now move to the question of
uniqueness of the quasi-invariant distributions.

\begin{Prop} \label{lem:unique}
For any $\lambda$ in the interval $(0,\lambda^*]$, there exists a
unique probability measure on $(\mbb R^+, \mathcal B(\mbb R^+))$ that
satisfies the relation
%
\begin{equation}
\label{eq:qinvrel} \mu(A) = \frac{q+\lambda}{q} P^\mu\bigl[
X_{e(q)}\in A, e(q) < \tau^X_0 \bigr],\qquad A\in
\mathcal B\bigl(\mbb R^+\bigr), q>0.
\end{equation}
\end{Prop}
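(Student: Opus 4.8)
The plan is to peel relation \eqref{eq:qinvrel} back to the $\lambda$-invariance relation \eqref{eq:linv}, read off existence from Proposition~\ref{prop:linv}, and then establish uniqueness through the integral equation \eqref{eq:id} of Proposition~\ref{lem2}.

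\textbf{Step 1 (reduction and existence).} Since $P^\mu[X_{e(q)}\in A,\,e(q)<\tau_0^X]=q\int_0^\infty\te{-qt}P^\mu(X_t\in A,\,t<\tau_0^X)\,\td t$, relation \eqref{eq:qinvrel} says precisely that, for every Borel $A\subset\mbb R^+$, the Laplace transform in $t$ of the right-continuous map $t\mapsto P^\mu(X_t\in A,\,t<\tau_0^X)$ equals $\mu(A)/(q+\lambda)$ for all $q>0$; by injectivity of the Laplace transform this is equivalent to $P^\mu(X_t\in A,\,t<\tau_0^X)=\te{-\lambda t}\mu(A)$, i.e. to $\mu$ being a $\lambda$-invariant distribution of $\{X_t,\,t<\tau_0^X\}$ in the sense of Definition~\ref{def:linv}; the converse implication is immediate on integrating \eqref{eq:linv} against $q\te{-qt}\,\td t$. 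Existence of a probability measure with property \eqref{eq:qinvrel} is therefore exactly the content of Proposition~\ref{prop:linv}(ii), which exhibits $\mu_\lambda$, so only uniqueness remains.

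\textbf{Step 2 (regularity of a $\lambda$-invariant measure).} Let $\mu$ be any probability measure on $\mbb R^+$ satisfying \eqref{eq:linv}. First one argues that $\mu$ is non-atomic: for $t>0$ the sub-Markov kernel $(x,A)\mapsto P_x(X_t\in A,\,t<\tau_0^X)$ is absolutely continuous with respect to Lebesgue measure (directly when $X_t$ has a density, and otherwise by conditioning on the first jump as in Lemma~\ref{lem:distinf}), and \eqref{eq:linv} represents $\mu$ as $\te{\lambda t}$ times the image of $\mu$ under this kernel, whence $\mu\ll\mathrm{Leb}$. Next one verifies the decay bound \eqref{eq:CC} for $\WH\mu$ along a vertical line $\Re(u)=a\in\Theta^o$—using exponential integrability of $\mu$ (which follows from \eqref{eq:linv} with $A=(x,\infty)$ together with the Cram\'er-type tail estimate underlying Lemma~\ref{lem:LT}) and smoothness of its density, or, if necessary, replacing \eqref{eq:CC} by the alternative hypothesis of the Remark following Proposition~\ref{lem2}. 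With these two facts in hand, Proposition~\ref{lem2}(ii) shows that $\WH\mu$ solves \eqref{eq:id} for every $q>0$.

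\textbf{Step 3 (uniqueness, the crux).} By Lemma~\ref{lem:LT}, $\WH\mu_\lambda$ also solves \eqref{eq:id}; since \eqref{eq:id} is linear and homogeneous in $\WH\mu$, the Laplace transform $\WH\nu:=\WH\mu-\WH\mu_\lambda$ of the total-mass-zero signed measure $\nu:=\mu-\mu_\lambda$ solves \eqref{eq:id} as well, and it suffices to prove $\WH\nu\equiv0$. Rewrite \eqref{eq:id} as
\begin{equation*}
\WH\nu(\theta)\,\frac{q}{q+\lambda}=\Psi^+(q,\ii\theta)\,J_q(\theta),
\qquad
J_q(\theta):=\frac{1}{2\pi\ii}\int_{a-\ii\infty}^{a+\ii\infty}\WH\nu(-u)\,\Psi^-(q,-\ii u)\,\frac{\td u}{u+\theta}.
\end{equation*}
As in the proof of Proposition~\ref{lem2} one checks that $\theta\mapsto J_q(\theta)$ is the Laplace transform of the restriction to $\mbb R^+$ of the convolution of $\nu$ with the law of $X_*(e(q))$, hence holomorphic and bounded on $\{\Re\theta\ge0\}$, while $\Psi^+(q,\cdot)$ is holomorphic, bounded and zero-free there. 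The $q$-dependence is then exploited: using the analytic extensions of $\Psi^\pm$ in $q$ from Lemma~\ref{lem:anex}, the identity \eqref{eq:Psimd}, and the change of variables \eqref{eq:chvar2} with $r=\bar\phi(-\lambda)$, divide the displayed identity by $q$ and let $q\downarrow0$: the left side tends to $\WH\nu(\theta)/\lambda$, while $\Psi^+(q,\cdot)\to\Psi^+(0,\cdot)\neq0$ and $q^{-1}J_q(\theta)$ converges (because $q^{-1}\Psi^-(q,-\ii u)\to\Psi^-(0,-\ii u)/0$), so one arrives at a single Wiener–Hopf-type integral equation for $\WH\nu$ with no free parameter; combined with $\WH\nu(0)=0$, the boundedness of $\WH\nu$ on $\{\Re\theta\ge0\}$, and the fact that the only possible singularity of $\WH\mu$ (hence of $\WH\nu$) in the pertinent half-plane is the simple pole of $\WH\mu_\lambda$ at $\theta=-\bar\phi(-\lambda)$ dictated by \eqref{eq:mul}, a Liouville-type argument forces $\WH\nu\equiv0$. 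Equivalently, one may re-run the dominated-convergence computation of Proposition~\ref{prop:linv}(i) with $\mu$ in place of $\mu_\lambda$, after renormalising by the Esscher transform of parameter $r$, to identify $\lim_{t\to\infty}E^\mu[\te{-\theta X_t}\mid X_*(t)\ge0]$ with $\WH\mu_\lambda(\theta)$; since $\lambda$-invariance makes this conditional Laplace transform equal to $\WH\mu(\theta)$ for every $t$, one concludes $\mu=\mu_\lambda$.

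\textbf{Main obstacle.} Step~3 is the substantive point: showing that \eqref{eq:id}—equivalently, that the quasi-stationary (Yaglom) limit of $X$ killed at $\tau_0^X$—pins down $\mu_\lambda$ among all $\lambda$-invariant laws, rather than just being consistent with it. The delicate instance is $\lambda=\lambda^*$, where $\bar\phi$ is evaluated at the left endpoint $-\lambda^*$ and the Esscher transform by $r=\theta^*$ turns the tilted process from negatively drifting to oscillatory, so the dominated-convergence steps must be reached by first treating $\lambda<\lambda^*$ and then letting $\lambda\nearrow\lambda^*$, exactly as in Lemma~\ref{lem:LT}(ii) and Proposition~\ref{prop:linv}.
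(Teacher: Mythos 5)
The proposal takes a genuinely different route from the paper, and the divergence occurs precisely at the step that is the real content of the result. You reduce uniqueness to showing that any $\lambda$-invariant distribution $\mu$ must satisfy the integral equation \eqref{eq:id}, then try to show the only solution is $\WH\mu_\lambda$ by a Liouville-type argument or by re-running the quasi-stationary limit. The paper instead proves uniqueness by a Banach fixed-point argument: after an Esscher transform by $\theta^*$ it defines an operator $\mc H$ on a suitable Banach space of exponentially weighted measures, shows $\mc H$ maps the relevant set $\mc M$ to itself, observes that any $\lambda$-invariant law corresponds to a fixed point of $\mc H$, and verifies that $\mc H$ is a strict contraction with ratio $(q+\lambda)/(q+\lambda^*)<1$ when $\lambda<\lambda^*$, with the boundary case $\lambda=\lambda^*$ handled by a small complex perturbation of $\theta^*$. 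This is cleaner because it bypasses the analytic-function bookkeeping your route requires.

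The gap in your argument is Step 3. You assert that, after sending $q\downarrow0$ in \eqref{eq:id}, a single Wiener--Hopf-type equation plus $\WH\nu(0)=0$, boundedness on $\{\Re\theta\ge0\}$, and knowledge of the pole location, ``a Liouville-type argument forces $\WH\nu\equiv0$,'' but you never write down that equation or show why it has no nontrivial bounded solution with those properties. As written, this is a plan, not a proof; nothing you state rules out a nonzero $\WH\nu$ satisfying the same homogeneous linear constraints. The alternative you offer—re-running the dominated-convergence computation of Proposition~\ref{prop:linv}(i) with $\mu$ in place of $\mu_\lambda$—is circular: that computation uses the explicit convolution form of $\mu_\lambda$ via the tilted reflected process, which is precisely what one does not have for an arbitrary $\lambda$-invariant $\mu$. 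The conclusion you want, namely that $\lim_{t\to\infty}E^\mu[\te{-\theta X_t}\mid X_*(t)\ge0]=\WH\mu_\lambda(\theta)$ independently of the initial law, is a Yaglom-limit statement that itself requires proof and is not supplied by the cited propositions. A secondary gap is in Step 2: to invoke Proposition~\ref{lem2} you must verify \eqref{eq:CC} (or the alternative hypothesis of the accompanying remark) for an \emph{arbitrary} $\lambda$-invariant $\mu$, but you only gesture at this; absolute continuity of $\mu$ under Assumption~\ref{as:smooth}(iii) alone and the vertical-line decay of $\WH\mu$ both need genuine work that you leave undone. Your Step 1 reduction (equivalence of \eqref{eq:qinvrel} with Definition~\ref{def:linv} by injectivity of the Laplace transform in $q$) and the appeal to Proposition~\ref{prop:linv} for existence are correct.
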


The proof rests on a contraction argument.

\begin{pf*}{Proof of Proposition~\ref{lem:unique}}
Again we consider first the case $\lambda\in(0,\lambda^*)$. By changing
measure from $P$ to the
Esscher transform $P^{(\theta^*)}$, the right-hand side of \eqref
{eq:qinvrel} can be expressed as
\[
\int_{\mbb R^+} \int_{\mbb R^+} (q+\lambda)
\te{-qt} \te{-\lambda ^* t} E^{(\theta^*)}_x\bigl[\te{-
\theta^*(X_t-x)}\mathbf 1_{\{t < \tau^X_0\}}\bigr] \,\td t\, \mu(\td x).
\]
Denote by $\mathcal M$ the collection of measures $m$
on the measure space $(\mbb R^+, \mathcal B(\mbb R^+))$ that satisfy
the conditions
%
\begin{eqnarray}
\label{nu-int} & \qquad\mbox{the measure $\tilde m$ given by $\tilde m(\td x):= \te {-
\theta ^*x}m(\td x)$ satisfies $\tilde m\bigl(\mbb R^+\bigr) = 1$, }&
\\
& \displaystyle\mbox{and }\int_{\mbb R^+}P_x\bigl(e(q) <
\tau^X_0\bigr)\tilde m(\td x) = \frac
{q}{q+\lambda}.&
\label{nu-exp}
\end{eqnarray}
The set $\mathcal M$ is nonempty as it contains the measure $m_\lambda:=
\te
{\theta^* x}\mu_\lambda(\td x)$
(which is the case since $\mu_\lambda$ is a $\lambda$-invariant
distribution of $\{X_t, t<\tau^X_0\}$ by Proposition~\ref{prop:linv}).
Furthermore, $\mathcal M$ is a closed subset of the space $\mathcal
P^*$ of
measures $m$ on $\mbb R^+$
satisfying the integrability condition $\int_{\mbb R^+}\te{-\theta
^*x}m(\td x)<\infty$,
which is a Banach space under the norm given by
$\|\pi- \pi'\|:= \sup_{\Upsilon} |\pi(f)- \pi'(f)|$ with
$\Upsilon:= \{f\in L^0(\mbb R^+)\dvtx|f(x)|\leq\te{-\theta^* x}
\ \forall
x\in\mbb R^+\}$
which is contained in the set $L^0(\mbb R^+)$ of real-valued
Borel-functions with domain $\mbb R^+$.

Next, we let $\mathcal H$ be the operator
$\mathcal H\dvtx\mathcal M \to\mathcal P^*$ given by
%
\begin{eqnarray}
\label{H} (\mathcal H m) (A) = \frac{q+\lambda}{q^*} \int_{\mbb R^+}
\int_{\mbb R^+} q_*\te{-q_* t} P_x^{(\theta^*)}
\bigl(X_t\in A, t<\tau^X_0\bigr)\,\td t\, m(\td
x),
\nonumber
\\[-8pt]
\\[-8pt]
\eqntext{ A\in\mathcal B\bigl(\mbb R^+\bigr), m\in\mathcal M,}
\end{eqnarray}
where $q^*=q+\lambda^*$. We note that any $\lambda$-invariant
distribution $\mu$ of
$\{X_t, t<\tau^X_0\}$ gives rise to a fixed point of $\mathcal H$ in
$\mathcal M$:
denoting by $m_*$ the Borel measure on $\mbb R^+$ given by
$m_*(\td x) = \te{\theta^* x}\mu(\td x)$, it is straightforward to
verify by a change-of-measure argument that
the equality in \eqref{eq:qinvrel} can be equivalently rephrased as
$m_* = \mathcal H m_*$.
We show next that the operator $\mathcal H$ is a contraction on
$\mathcal M$.

First, we verify that $\mathcal H$ maps $\mathcal M$ to itself.
Indeed, for any
$m\in\mathcal M$,
the measure $m'$ on $\mbb R^+$ given by $m'(\td x)=\te{-\theta^*
x}(\mathcal
Hm)(\td x)$
(a) has unit mass and (b) satisfies the condition in \eqref{nu-exp}.
To see that (a) holds we observe that, by changing the measure back
from $P^{(\theta^*)}$ to $P$,
we get
\[
m'(A) = \frac{q+\lambda}{q}P^{\tilde m}\bigl(X_{e(q)}
\in A, e(q)<\tau ^X_0\bigr) = P^{\tilde m}
\bigl(X_{e(q)}\in A| e(q)<\tau^X_0\bigr),
\]
with the measure $\tilde m$ defined in \eqref{nu-int}, where the second
equality follows from~\eqref{nu-exp}.
Furthermore, an application of the Markov property shows
\begin{eqnarray*}
P^{m'}\bigl(\tau^X_0> e(q)\bigr) &=&
E^{\tilde m}\bigl[ P_{X_{e(q)}}\bigl(\tau^X_0>e(q)
\bigr) |\tau^X_0> e(q)\bigr]
\\
&=& P^{\tilde m}\bigl(\tau^X_0 > e(q) +
e'(q)|\tau^X_0> e(q)\bigr) \\
&=&
P^{\tilde
m}\bigl(\tau^X_0> e'(q)
\bigr) = \frac{q}{q+\lambda},
\end{eqnarray*}
where $e'(q)$ and $e(q)$ denote independent $\mathrm {Exp}(q)$-random times
that are independent of $X$,
and where the second line holds as $\tau^X_0\sim\mathrm
{Exp}(\lambda)$
under $P^{\tilde m}$
[since $\tilde m$ satisfies \eqref{nu-exp}]. Hence, also property (b)
holds true.

Second, we note that the definition of $\mathcal H$ yields the estimate
\[
\|\mathcal Hm_1 -\mathcal Hm_2\| \leq\frac{q+\lambda}{q^*}
\|m_1 - m_2\| < \|m_1 - m_2\|,\qquad
m_1, m_2\in\mathcal M,
\]
where in the second inequality we used that $q+\lambda$ is strictly
smaller than
$q^*$.

Thus, an application of Banach's contraction theorem shows
that there exists a unique
measure $\pi^*$ in $\mathcal M$ that satisfies the relation $\pi
^*=\mathcal
H\pi
^*$, which
implies the asserted uniqueness for $\lambda\in(0,\lambda^*)$.

We next consider the boundary case $\lambda=\lambda^*$.
The proof in this case follows by a modification of above argument.
Since the function $v\to\Psi(-\ii v)$ is analytic in a neighbourhood of
$\theta^*$ in the complex plane
and $-\lambda^* = \Psi(-\ii\theta^*)$, and nonconstant analytic
functions map open sets to open sets,
it follows that for any sufficiently small~$\e>0$ and any $\lambda_\e$
satisfying
$\lambda_\e-\lambda^*\in(0,\e]$ there exists an $\upsilon$ in a
neighbourhood of $\theta^*$ in the complex plane
such that $\Psi(-\mathbf i\upsilon)=-\lambda_\e$. Fix such an $\e$
and a
corresponding $\lambda_\e$ and $\upsilon=\upsilon_\e$.
By repeating above argument, replacing the Esscher-transform
$P^{(\theta
^*)}$ by the complex-valued
change of measure $P^{(\upsilon_\e)}$, we find that the corresponding
map $\mathcal H^\e$ [defined by the right-hand side of \eqref{H} with
$(\lambda^*,\theta^*)$ replaced by $(\lambda_\e, \upsilon_\e)$] is
still a contraction
but now on the set $\mathcal M_\e$ of complex valued measures $m=m_1
+ \ii
m_2$ satisfying
the condition $\tilde m(\mbb R^+)=1$ and~\eqref{nu-exp} with the
Borel-measure $\tilde m$ on $\mbb R^+$ now given by
$\tilde m(\td x)= \te{-\upsilon_\e x}m(\td x)$.

Specifically, $\mathcal H^\e$ is a contraction in the Banach space
$\mathcal
P^\e
$ of complex valued measures $m$
satisfying the condition $|\int_{\mbb R^+}\te{-\upsilon_\e x}m(\td
x)|<\infty$, with respect to the norm
$\|\pi- \pi'\|_\e:= \sup_{\Upsilon_\e} |\pi(f)- \pi'(f)|$ where the
supremum is taken
over the subset $\Upsilon_\e:= \{f\in L^0(\mbb C)\dvtx|f(x)|\leq
|\te
{-\upsilon_\e x}|\ \forall x\in\mbb R^+\}$
of the set $L^0(\mbb C)$ of complex-valued Borel functions with domain
$\mbb R^+$.
Thus, also in the case $\lambda=\lambda^*$, Banach's contraction
theorem yields the existence of
a unique probability measure satisfying~\eqref{eq:qinvrel}, and the
proof is complete.
\end{pf*}

\begin{pf*}{Proof of Theorem~\ref{thm:II}}
Let $\lambda$ in $(0,\lambda^*]$ be arbitrary.
In Lemma~\ref{lem:LT} it is shown that $\mu_\lambda$ is the
Laplace transform of the probability measure $\mu_\lambda$.
Furthermore, it follows by combining
Propositions \ref{prop:linv} and \ref{lem:unique}
that the probability measure $\mu_\lambda$
is the unique $\lambda$-invariant distribution for the process $\{X_t,
t<\tau^X_0\}$.
\end{pf*}

\section{Mixed-exponential L\'{e}vy processes}\label{sec:pthecase}
We next identify explicitly the quasi-invariant distributions for the
class of mixed-exponential L\'{e}vy processes that are killed upon
first entrance into the negative half-axis.
We recall that a \emph{mixed-exponential L\'{e}vy process} $X=\{X_t,
t\in
\mbb R^+\}$ is a
jump-diffusion given by
%
\begin{equation}
\label{eq:Mex} X_t = X_0 + \eta t + \sigma
W_t + \sum_{j=1}^{N_t}
U_j, \qquad t\in\mbb R^+,
\end{equation}
where $W$ is a Wiener process, $\eta\in\mbb R$ and $\sigma>0$ denote
the drift and the volatility, and $N$ is a Poisson process with rate
$\ell$ that is independent of $W$.
The series $(U_j)_{j\in\mbb N}$ consists of IID random variables
that are independent of $W$ and $N$ and follow a \emph
{double-mixed-exponential distribution}, which
is a probability distribution on $\mbb R$ with PDF given by
\begin{eqnarray}
f(x) = pf_+(x) + (1-p) f_-(x) \nonumber\\
\eqntext{\displaystyle\mbox{with } f_\pm(x) = \sum
_{k=1}^{m_\pm} a^\pm_k
\alpha^\pm_k\te{-\alpha ^\pm
_k|x|}\mathbf 1_{\mbb R^+}(\pm x),  x\in\mbb R,}
\end{eqnarray}
where $p$ is a number in the unit interval $[0,1]$ and $f_+$ and $f_-$
are themselves probability density functions that are linear
combinations of $m^+$ and $m^-$ exponentials, respectively,
with real-valued weights $a_1^+,\ldots, a_{m^+}$ and $a_1^-,\ldots,
a^-_{m^-}$
and strictly positive parameters $\alpha^+_1,\ldots, \alpha^+_{m^+}$
and $\alpha^-_1,\ldots, \alpha^-_{m^-}$.
To ensure that the functions $f_+$ and $f_-$ are PDFs the parameters $\{
a_k^\pm, k=1,\ldots, m^\pm\}$ need to satisfy certain restrictions;
necessary and sufficient conditions for $f_+$ and $f_-$ to be PDFs are
\[
p_1^\pm> 0,\qquad \sum_{k=1}^{m^\pm}p_k^\pm
\alpha_k^\pm\geq0 \quad\mbox{and}\quad\sum
_{k=1}^{l}p_k^\pm
\alpha_k^\pm\geq0 \qquad\forall l = 1,\ldots,m^\pm,
\]
respectively (see Bartholomew~\cite{Bartholomew1969}).

The characteristic exponent of the L\'{e}vy process $X-X_0$ is given by
\[
\Psi(\theta) = - \frac{\sigma^2}{2}\theta^2 + \ii\eta\theta+ \ell p
\sum_{k=1}^{m^+}a_k^+
\frac{\ii\theta}{\a_k^+ - \ii\theta
} - \ell(1-p) \sum_{j=1}^{m^-}a_j^-
\frac{\theta\ii}{\a_j^- + \theta\ii}.
\]

As the function $\Psi$ is rational, it admits an analytical
continuation to the complement
in the complex plane of the finite set $\{-\ii\alpha^+_1,\ldots,
-\ii
\alpha^+_{m^+}, \ii\alpha^-_1,\break \ldots, \ii\alpha^-_{m^-}\}$,
which is
again denoted by $\Psi$.
The mixed-exponential L\'{e}vy process satisfies Assumption~\ref{A1}
precisely if the parameters satisfy the restriction
%
\begin{equation}
\label{eq:as1r} \psi'(0) = \eta+ \ell p \sum
_{k=1}^{m^+}\frac{a_k^+}{\a_k^+} - \ell (1-p) \sum
_{k=1}^{m^-}\frac{a_k^-}{\a_k^-} < 0.
\end{equation}

The
Wiener--Hopf factors associated to $X$
are given by (from Lewis and Mordecki \cite{LM})
%
\begin{eqnarray}
\label{eq:Psipm} \Psi^+(q,\theta) &=&
\frac{1}{  (1 - {\ii\theta}/{(\rho_0^+(q))}  )} \prod
_{k=1}^{m^+} \frac{  (1 - {\ii\theta}/{\alpha
_k^+}
)}%
{  (1 - {\ii\theta}/{(\rho_k^+(q))}  )},
\nonumber
\\[-8pt]
\\[-8pt]
\nonumber
 \Psi^-(q,\theta)& =&
\frac{1}{  (1 - {\ii\theta}/{(\rho
_0^-(q))}  )} \prod_{k=1}^{m^-}
\frac{  (1 + {\ii\theta}/{\alpha
_k^-}
)}%
{  (1 - {\ii\theta}/{(\rho_k^-(q))}  )},
\end{eqnarray}
for $q>0$, where $\rho_k^+(q), k=0,\ldots, m^+$, and
$\rho_j^-(q), j=0,\ldots, m^-$, are the roots of the
Cram\'{e}r--Lundberg equation
%
\begin{equation}
\label{eq:CL} \Psi(-\ii\theta) - q = 0
\end{equation}
with positive and negative real parts, respectively (where multiple
roots are listed
as many times as their multiplicity). By analytical continuation and
continuous extension
it follows that the expressions in \eqref{eq:Psipm} remains valid for
$q\in[-\lambda^*,0]$. The quasi-invariant distributions are expressed
in terms of
these ingredients as follows.

\begin{Prop}\label{lem:mix}
For any $\lambda\in(0,\lambda^*]$
%
\begin{equation}
\label{eq:mulme} \WH\mu_\lambda(\theta) = \frac{\bar\phi(-\lambda)}{\bar\phi(-\lambda) + \theta}\cdot
\frac{\rho
^+_0(-\lambda)}{\rho^+_0(-\lambda) + \theta} \prod_{j=1}^{m^+}
\frac{  (1 + {\theta}/{\alpha
_j^+}  )}%
{  (1 + {\theta}/{(\rho_j^+(-\lambda))}  )}
\end{equation}
is the Laplace transform
of the $\lambda$-invariant probability distribution $\mu_\lambda$ of
$\{
X_t, t<\tau^X_0\}$,
where $\rho_k^+(-\lambda), k=0,\ldots, m^+$, denote the roots $\rho$
of $\Psi(-\ii\rho)=-\lambda$
with $\Re(\rho)>\bar\phi(-\lambda)$.
\end{Prop}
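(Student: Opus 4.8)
The plan is to substitute the explicit Wiener--Hopf factor of a mixed-exponential L\'evy process into the representation of $\WH\mu_\lambda$ furnished by Theorem~\ref{thm:II} and to read off \eqref{eq:mulme} by elementary algebra. Theorem~\ref{thm:II}, together with \eqref{eq:mul}, already gives for every $\lambda\in(0,\lambda^*]$ that
\[
\WH\mu_\lambda(\theta)=\frac{\bar\phi(-\lambda)}{\bar\phi(-\lambda)+\theta}\,\Psi^+(-\lambda,\ii\theta)
\]
is the Laplace transform of the $\lambda$-invariant probability distribution $\mu_\lambda$ of $\{X_t,t<\tau^X_0\}$, so for the last two assertions of the Proposition nothing beyond Theorem~\ref{thm:II} is required. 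Into this representation I would insert the Lewis--Mordecki expression \eqref{eq:Psipm} for $\Psi^+(q,\cdot)$, which by analytic continuation and continuous extension remains valid at $q=-\lambda$, and then simplify using $1-\frac{\ii(\ii\theta)}{\rho}=1+\frac{\theta}{\rho}$ and $\bigl(1+\frac{\theta}{\rho_0^+(-\lambda)}\bigr)^{-1}=\frac{\rho_0^+(-\lambda)}{\rho_0^+(-\lambda)+\theta}$, which produces exactly \eqref{eq:mulme}. The only genuinely substantive point that remains is the \emph{intrinsic identification} of $\rho_k^+(-\lambda)$, $k=0,\dots,m^+$, as the $m^++1$ roots (with multiplicity) of $\Psi(-\ii\rho)=-\lambda$ lying in the half-plane $\{\Re(\rho)>\bar\phi(-\lambda)\}$.

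For that identification I would argue in two steps. First, that each $\rho_k^+(-\lambda)$ satisfies $\Re(\rho_k^+(-\lambda))>\bar\phi(-\lambda)$: by \eqref{eq:Psipl} (equivalently \eqref{WHpid}) one has $\Psi^+(-\lambda,\ii\theta)=\Psi_r^+(0,\ii(\theta+r))/\Psi_r^+(0,\ii r)$ with $r=\bar\phi(-\lambda)$, and for $\lambda<\lambda^*$ the process under the Esscher measure $P^{(r)}$ has strictly negative mean, so $\Psi_r^+(0,\cdot)=E^{(r)}[\te{\ii\,\cdot\,X^*_\infty}]$ is finite on the closed half-plane $\{\Im\ge0\}$ while $\Psi_r^+(0,\ii r)=E^{(r)}[\te{-rX^*_\infty}]\in(0,1]$; hence the right-hand side is finite for every $\theta$ with $\Re(\theta)\ge-\bar\phi(-\lambda)$. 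Since the rational form \eqref{eq:Psipm} of $\Psi^+(-\lambda,\ii\theta)$ has poles only at $\theta=-\rho_k^+(-\lambda)$, all of those poles must lie in $\{\Re(\theta)<-\bar\phi(-\lambda)\}$, the strictness being automatic because the whole closed half-plane is covered. Second, that no further root of $\Psi(-\ii\rho)=-\lambda$ lies in $\{\Re(\rho)>\bar\phi(-\lambda)\}$: under $P^{(r)}$ the process is again mixed-exponential, now of negative mean, so the classical count of Cram\'er--Lundberg roots (the one already underlying \eqref{eq:Psipm}, applied at a small positive level and passed to the limit) shows that $\psi^{(r)}(s)=0$ --- equivalently $\psi(s+r)=-\lambda$ --- has exactly $m^++1$ roots with $\Re(s)>0$ and $m^-+1$ with $\Re(s)<0$; translating through $\rho=s+r$ gives exactly $m^++1$ roots of $\psi(\rho)=-\lambda$ with $\Re(\rho)>\bar\phi(-\lambda)$. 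The Wiener--Hopf identity \eqref{eq:WH2} at $q=-\lambda$, which equates the pole set of $\Psi^+(-\lambda,\cdot)\Psi^-(-\lambda,\cdot)$ with the full zero set of $\lambda+\Psi(\cdot)$, then confirms that the $\rho_k^+(-\lambda)$, counted with multiplicity, exhaust precisely these $m^++1$ roots.

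The step I expect to be the main obstacle is exactly this bookkeeping of the roots relative to the critical line $\Re(\rho)=\bar\phi(-\lambda)$: one has to be sure that the analytic continuation of $\Psi^\pm(q,\cdot)$ from $q>0$ down to $q=-\lambda$ carries no root of $\Psi(-\ii\rho)=-\lambda$ across that line, and that the $\rho_k^+(-\lambda)$ avoid the fixed zeros $\alpha_k^+$ of the rational functions at hand; both follow from the finiteness and analyticity facts recorded above, but they are where the structure of the mixed-exponential model is genuinely used, whereas the passage from Theorem~\ref{thm:II} to \eqref{eq:mulme} is mere substitution. Finally, the boundary value $\lambda=\lambda^*$ is handled by letting $\lambda\nearrow\lambda^*$: there $\bar\phi(-\lambda^*)=\theta^*$, and the continuity in $\lambda$ of $\Psi^+(-\lambda,\ii\theta)$ and of the roots of $\Psi(-\ii\rho)=-\lambda$ noted earlier yields both \eqref{eq:mulme} and the description of the parameters $\rho_k^+(-\lambda^*)$ in the limit.
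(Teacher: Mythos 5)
Your proof is correct and takes the paper's primary route: the formula \eqref{eq:mulme} follows by substituting the analytically continued Lewis--Mordecki expression \eqref{eq:Psipm} into \eqref{eq:mul} and invoking Theorem~\ref{thm:II} for the $\lambda$-invariance, which is exactly how the Proposition is presented. The bookkeeping of the roots relative to the line $\Re(\rho)=\bar\phi(-\lambda)$, which you spell out via the Esscher representation \eqref{WHpid} and the root count under $P^{(r)}$, is the only substantive point and is left implicit in the paper; note that Appendix~\ref{ssec:resd} offers an independent verification of the $\lambda$-invariance by residue calculus applied to \eqref{eq:id}, a genuinely different route that you do not pursue and do not need.
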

In the \hyperref[ssec:resd]{Appendix}, we present a self-contained proof of the
$\lambda$-invariance
of $\mu_\lambda$ based on residue calculus.

\begin{Rem} In the case that the roots $\rho_k^+(-\lambda)$ are
all distinct
the probability measure $\mu_\lambda$ is a mixed-exponential distribution
that can be obtained from the Laplace transform $\WH\mu_\lambda$
by partial fraction decomposition and termwise inversion:
%
\begin{eqnarray}
\label{eq:mlme}  \mu_\lambda(\td x) &=& \mathbf 1_{\mbb R^+}(x)\cdot
m_\lambda (x)\,\td x,
\nonumber
\\[-8pt]
\\[-8pt]
\nonumber
 m_\lambda(x) &=& A_0^- \bar
\phi(-\lambda)\te{-\bar\phi(-\lambda) x} + \sum_{k=0}^{m_+}
A^+_k \rho^+_k(-\lambda) \te{-\rho_k^+(-
\lambda ) x}.
\end{eqnarray}
Here, the constants
$A^+_k$, $k=0,\ldots, m_+$, and $A_0^-:= A^+_{-1}$ are given by
%
\begin{equation}
\label{eq:Akme} A^+_k = \biggl(1 - \frac{\rho_k^+(-\lambda)}{\alpha_k^+} \biggr)\cdot
\prod_{j=-1, j\neq k}^{m^+} \frac{  (1 - {\rho
_k^+(-\lambda
)}/{\alpha_j^+}  )}%
{  (1 - {\rho_k^+(-\lambda)}/{(\rho_j^+(-\lambda))}  )},
\end{equation}
where $\rho^+_{-1}(-\lambda):= \bar\phi(-\lambda)$
and the constants $\alpha^+_{-1}$ and $\alpha_0^+$ are to be taken
equal to $+\infty$ [so that the factors
$(1 + \rho_k^+(-\lambda)/\alpha_0^+)$ and $(1 + \rho_k^+(-\lambda
)/\alpha_{-1}^+)$ in the product are equal to 1].
\end{Rem}

\begin{Rem}
The class of mixed-exponential L\'{e}vy processes is dense in the class
of all L\'{e}vy processes
[in the sense of weak convergence in the Skorokhod topology $J_1$ on
the Skorokhod space $D(\mbb R)$], which can be seen as follows. It is
well known
(see, e.g., Jacod and Shiryaev~\cite{JS}, Corollary VII.3.6) that
a sequence of L\'{e}vy processes converges weakly precisely if
the values at time $t=1$ converge in distribution. The corresponding
infinitely divisible distributions may be approximated arbitrarily
closely by a sequence of compound Poisson distributions $\mbox
{CP}(F_n,\ell_n)$
where the distributions $F_n$ may be chosen to be double-mixed
exponential distributions as the latter
form a dense class in the sense of weak convergence in the set of all
probability distributions on the real line (see Botta~and~Harris~\cite{BH}).
\end{Rem}
%
\section{Application to credit-risk modelling}\label{sec:crv}
With the results on inverse first-passage time problems in hand, we
next turn to an application of these results
to the problem of counterparty risk valuation.
As noted in the\break \hyperref[sec:intro]{Introduction}, in the structural
approach that was
initially proposed by Black and Cox~\cite{BC} the time of default of a
firm is defined as the first epoch that the value of the firm falls
below the value of its debt, which in the setting of \cite{BC} is equal
to the hitting time of a geometric Brownian motion to some level.
Subsequent studies
such as \cite{AZ,HW} present stylized ``default barrier models''
for the time of default as the epoch of first-passage of
a stochastic process over a default barrier.

A credit default swap (CDS) is a commonly traded financial contract
that provides insurance against the event that a specific company
defaults on its financial obligations. An important problem for a
financial institution is to ensure that the model-values of traded
credit derivatives (such as the CDS) that are recorded in its books are
consistent with market quotes. In a default-barrier model for the value
of the CDS, one is led to the inverse problem of identifying the
boundary that will equate model- and market-values.

Apart from featuring in the valuation of credit derivatives such
as the CDS, the credit risk of a company may also affect the value of
other assets in the portfolio, especially in the cases where
the company in question acts as counterparty in a trade. The
quantification of this type of risk, named \emph{counterparty risk},
requires the joint modelling of asset values and the risk of
default of the company in question (see Cesari {et al.} \cite
{Cesari} for background on counterparty risk). Various aspects of the
modelling of counterparty risk in default barrier models
have been investigated, for instance, in~\cite{BP,BT,EEH,LS,LS2,OS};
in these papers, the model and market quotes are matched by calibration
of the model parameters. Next, we present an explicit example of the
valuation of a call option under counterparty risk in a default-barrier
model that is \emph{by construction} consistent with a given
risk-neutral probability of default, using the solution to the RIFPT
problem given in Corollary~\ref{thm:IFPT}.

\subsection{Valuation of a call option under counterparty risk}\label
{sec:crv1}
This problem involves three entities, a company $A$, whose stock price
is denoted by $S_t$, a bank $B$ and the bank's counterparty $C$.
The problem under consideration is the fair valuation of the
{counterparty risk} to $B$
resulting from a transaction
in which $C$ has sold to $B$ a European call option on the stock of
company $A$.
We consider the situation where only $C$ is default risky while $A$ and
$B$ are free of default risk---in the finance literature the call
option is in this case referred to as a \emph{vulnerable} call option
(first labelled such by Johnson and Stultz~\cite{JSt}; see also Jarrow
and Turnbull~\cite{JT} for an application to zero-coupon bond valuation).
Then $B$, as the owner of the call option, is exposed to counterparty
risk, namely the potential loss that is incurred if its counterparty
$C$ goes into default before the maturity $T$ of the call option, and
fails to deliver the pay-off of the call option. If $\tau$ denotes the
epoch of default of~$C$, then the {fair value} $\pi$
of the potential loss of the holder of the option (discounted to time 0
at the risk-free rate $r$) and the so-called \emph{expected positive
exposure} $P_t$ are given by
%
\begin{eqnarray}
\label{eq:CPR} \Pi&=& E[V_{\tau}\mathbf 1_{\{\tau\leq T\}}],
\\
P_t &=& E[V_{\tau}|\tau= t],\qquad t\in[0,T],\label{eq:PPd}
\end{eqnarray}
where $V_\tau$ denotes
the value at time $\tau$ of a
$T$-maturity call-option with strike $K$ on the value of stock,
discounted to time 0:
%
\begin{equation}
\label{eq:Vtau} V_\tau=\te{-r\tau}E\bigl[\te{-r(T-\tau)}(S_T-K)^+|
\mathcal F_\t\bigr].
\end{equation}
The conditional expectation in \eqref{eq:PPd}
is understood as the regular version of the conditional expectation
$E[V_{\tau}|\tau]$ [under Assumption~\ref{As2}(iii) below this
conditional expectation can just be defined in the usual way for
continuous random variables].
We will phrase the model in terms of two independent L\'{e}vy processes
$X$ and $Z$ satisfying Assumption~\ref{A1}.
Throughout this section, we work under the following additional assumptions.

\begin{As}\label{As2}
(i) We have $\unl\theta_X<-1$, $\overline \theta_X > 1 +
\alpha$,
$\overline \theta_Z>1+\a$ for some $\alpha>0$.\vspace*{-6pt}
\begin{longlist}[(iii)]
\item [(ii)] The CDF $H$ has a continuous density $h$, and satisfies
$\overline
H(T)>0$ and $\lambda_X^* > -\log\overline H(T)/H(T)$,
where $\lambda_X^*$ denotes the maximizer in \eqref{eq:petrov}.
\item [(iii)] For any $x>0$, there exists a collection of measures
$\{p_{t,x}(\td y), t\in\mbb R^+\}$
on $(\mbb R_-, \mathcal B(\mbb R_-))$ satisfying $p_{t,x}(\td y)\,\td t =
P(X_{\tau^X_{-x}}\in\td y, \tau^X_{-x}\in\td t)$.
\end{longlist}
\end{As}

Let the credit-worthiness of the counterparty $C$ be
described by the \emph{distance-to-default} $Y$, in the sense that the
default of $C$ occurs at the first moment that the process $Y$ falls
below the level $0$.
We assume that the process $Y$ is given in terms of $X$ by
%
\begin{eqnarray}
\label{eq:Yindexex}  Y_t &=& Y_0 + X_{I(t)},\qquad
I(t) = I_{\mu^X_{\lambda^0}}(t) =
T \cdot \frac{\log\overline H(t)}{\log\overline H(T)},\qquad t\in[0,T],
\\
 Y_0 &\sim&\mu^X_{\lambda^0},\qquad
\lambda^0 = - T^{-1}\cdot\log \overline H(T),
\end{eqnarray}
where, as before, $Y_0$ is independent of $X$ and
$\mu_{\lambda^0}^X$ denotes the $\lambda^0$-invariant
distribution of $\{X_t, t<\tau^X_0\}$.
Here, we have chosen $\lambda^0$ so as to normalise the ratio
$I(T)/T$ to unity.
Note that the CDF of the first-passage time $\tau^Y_0$ of the process
$Y$ defined in \eqref{eq:Yindexex} is given by $H$
[in view of Corollary~\ref{thm:IFPT} and Assumption~\ref{As2}(ii)].

In the case that the price process $S$ is independent of credit index
process $Y$, we note that the expectation in \eqref{eq:CPR} is just equal
to the integral of the expectation $E[V_t]$ against the measure
$H(\td t)$. Next, we consider an instance of the complementary case that
$S$ and $Y$ are dependent. More specifically, we assume that $S$ is
given by
%
\begin{equation}\qquad
\label{eq:S} %
\cases{ S_t = S_0 \exp \bigl
\{(r-d) t + L_t - \kappa_{t, I(t)}(-\ii) \bigr\}, &\quad  $t\in
[0,T], S_0>0$,\vspace*{2pt}
\cr
L_t = \rho
X_{I(t)} + Z_t, & \quad $\rho\in[-1,1]$,\vspace*{2pt}
\cr
\kappa_{t_1, t_2}(u) = \Psi_Z(u)t_1 +
\Psi_X(u\rho)t_2, & \quad $\Im(u)\in[-1-\alpha,0]$,} %
\end{equation}
where $\Psi_Z$ and $\Psi_X$ denote the characteristic exponents of the
L\'{e}vy processes $X$ and $Z$ and $r$ and $d$ denote the risk-free rate
and the dividend yield, respectively. The degree of dependence between
the stock price process $S$ and the credit index process $Y$ is controlled
by the parameter $\rho$. Note that $\kappa_t$ has been specified such
that the discounted stock-price process with reinvested dividends $\te
{-r t}[\te{d t} S_t]$ is a martingale. In the following
result, a semi-analytical expression is derived
for $\pi$ and $P(t)$ in terms of an inverse Fourier-transform
$\mathcal
F_\xi^{-1}$
and an inverse Laplace-transform $\mathcal L_q^{-1}$ with respect to
$\xi$
and $q$, respectively.

\begin{Prop}\label{prop:cvacall}
The values $\pi$ and $P_t$, $t\in[0,T]$, are given by
%
\begin{eqnarray}
\label{eq:exppi} \Pi&=& \int_0^{T} N(t)
\frac{h(t)}{\lambda^0\overline H(t)} \,\td t,\qquad  P_t = \frac{N(t)}{\lambda^0 \overline H(t)},
\\
N(t) &=& \te{rT} \mathcal F_\xi^{-1}
\bigl(D_{t,T}(u) C_{t}(u) \bigr) (k),\qquad u = 1 + \alpha+ \ii
\xi,\label{eq:expPt}
\\
C_t(u)& = &\bigl(\lambda^0 \overline H(t)
\bigr)^{-1} \exp\bigl\{(r-t)tu - \kappa _{t,I(t)}(-\ii)u +
\Psi_Z(-\ii u) t\bigr\}
\nonumber
\\[-8pt]
\\[-8pt]
\nonumber
&&{}\times \mathcal L_q^{-1}
\bigl(f_{\rho u}(q)\bigr) (t),
\\
\qquad D_{t,T}(u) &=& \frac{\exp\{\kappa_{T,I(T)}(-\ii u) - \kappa
_{t,I(t)}(-\ii u)\}}{u(u-1)}, \label{eq:Dex}
\end{eqnarray}
with $k = \log K/c'$, $c'=\exp(-rT + (r-d)(T-t) - \kappa_T(-\ii) +
\kappa_t(-\ii))$.
\end{Prop}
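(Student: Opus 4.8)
The relation $\Pi=\int_0^T P_t\,h(t)\,\td t$ is immediate from \eqref{eq:CPR}--\eqref{eq:PPd}, since $\tau=\tau^Y_0$ has the continuous CDF $H$ with density $h$ (Corollary~\ref{thm:IFPT} and Assumption~\ref{As2}(ii)); so the whole task is to prove the stated formula for $P_t=E[V_\tau\mid\tau=t]$. Because $\te{-r\tau}$ and $\te{-r(T-\tau)}$ are $\mc F_\tau$-measurable, \eqref{eq:Vtau} reduces to $V_\tau=\te{-rT}E[(S_T-K)^+\mid\mc F_\tau]$, and by the tower property $P_t=\te{-rT}E[(S_T-K)^+\mid\tau=t]$. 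I would organise the computation in three steps: (a) on $\{\tau=t\}$ rewrite $S_T$ in terms of the $\mc F_\tau$-measurable value $S_\tau$ and an increment of $L$ independent of $\mc F_\tau$; (b) price the inner call by a Carr--Madan Fourier representation; (c) average over the default event via a disintegration built on Assumption~\ref{As2}(iii).

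For step (a): on $\{\tau=t\}$, \eqref{eq:S} gives $S_T=S_\tau\exp\{(r-d)(T-t)+(L_T-L_t)-\kappa_{T,I(T)}(-\ii)+\kappa_{t,I(t)}(-\ii)\}$ with $S_\tau$ being $\mc F_\tau$-measurable. Since $I$ is deterministic and increasing and $\tau$ is a stopping time of the filtration generated by $(Y_0,X)$, the epoch $I(\tau)$ is a stopping time of $X$; hence, conditionally on $\mc F_\tau$ on $\{\tau=t\}$, the increment $L_T-L_t=\rho(X_{I(T)}-X_{I(t)})+(Z_T-Z_t)$ is the independent sum of a L\'evy increment of $X$ over a time interval of length $I(T)-I(t)$ and one of $Z$ over length $T-t$, and by the independence of $X$ and $Z$ together with the linearity of $\kappa$ in its subscripts its exponential moment is the quantity $D_{t,T}(u)$ of \eqref{eq:Dex}. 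For step (b): fixing a damping parameter $\alpha>0$ as permitted by Assumption~\ref{As2}(i), one has $E[\te{(1+\alpha)(L_T-L_t)}]<\infty$, so the Carr--Madan representation of the European call applies; inverting $\xi\mapsto E[S_T^u\mid\mc F_\tau]/(u(u-1))$ with $u=1+\alpha+\ii\xi$ at the log-strike and using that, on $\{\tau=t\}$, $E[S_T^u\mid\mc F_\tau]$ equals $(S_\tau c')^uD_{t,T}(u)$ up to deterministic factors (the interchange of conditional expectation and inversion being legitimate by the moment bound and Fubini) expresses $E[(S_T-K)^+\mid\mc F_\tau]$, on $\{\tau=t\}$, as $\mc F_\xi^{-1}$ of $(S_\tau c')^uD_{t,T}(u)/(u(u-1))$ evaluated at $k$, the deterministic normalisation being absorbed into $c'$, $k$ and the prefactor $\te{rT}$ of \eqref{eq:expPt}.

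Step (c) is the substantive one. Averaging $(S_\tau c')^u$ over $\{\tau=t\}$, the remaining randomness is $Z_t$ and $X_{I(t)}=Y_\tau-Y_0$. As $Z$ is independent of $(Y_0,X)$, and hence of $\tau$, $E[\te{uZ_t}\mid\tau=t]=\te{t\Psi_Z(-\ii u)}$. For the $X$-part I would condition first on $Y_0=x$: then $\{\tau=t\}$ becomes $\{\tau^X_{-x}=I(t)\}$ and $X_{I(\tau)}$ becomes $X_{\tau^X_{-x}}$, so Assumption~\ref{As2}(iii) yields $P^{\mu_{\lambda^0}}(Y_0\in\td x,\ X_{I(t)}\in\td y,\ \tau\in\td t)=\mu_{\lambda^0}(\td x)\,p_{I(t),x}(\td y)\,I'(t)\,\td t$. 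Using that $\mu_{\lambda^0}$ is $\lambda^0$-invariant for $\{X_s,s<\tau^X_0\}$ (Theorem~\ref{thm:II}) and that \eqref{eq:Yindexex} gives $I(t)=T\log\ovl H(t)/\log\ovl H(T)$ with $\lambda^0=-T^{-1}\log\ovl H(T)$, so that $I'(t)=h(t)/(\lambda^0\ovl H(t))$, dividing by $P(\tau\in\td t)=h(t)\,\td t$ produces the conditional law $\mu_{\lambda^0}(\td x)\,p_{I(t),x}(\td y)/(\lambda^0\ovl H(t))$. Integrating $\te{u\rho(y-x)}$ against this kernel recovers, as a function of $t$, the inverse Laplace transform $\mc L_q^{-1}(f_{\rho u}(q))$; assembling this with $\te{t\Psi_Z(-\ii u)}$ and the deterministic exponential factors of \eqref{eq:S}, and collecting the powers of $\lambda^0\ovl H(t)$, yields $C_t(u)$, and inserting into step (b) gives $N(t)$ and the formulas \eqref{eq:exppi}--\eqref{eq:Dex}.

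I expect step (c) to be the main obstacle: one must exhibit a genuine regular version of $E[V_\tau\mid\tau=t]$ and describe the joint law of the pre-default data $(Y_0,X_{I(t)},Z_t)$ on $\{\tau=t\}$, simultaneously controlling the overshoot of $X$, the deterministic time change $I$, and the randomised starting distribution $\mu_{\lambda^0}$ — which is precisely what Assumption~\ref{As2}(iii) and the identity $I'=h/(\lambda^0\ovl H)$ are there to enable. The remaining ingredients — the analytic continuation of $\Psi_X$ and $\Psi_Z$ to the line $\Im(\cdot)=-(1+\alpha)$, and the repeated appeals to Fubini's theorem and dominated convergence — are routine once Assumption~\ref{As2}(i) is in force.
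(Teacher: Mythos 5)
Your proposal follows essentially the same route as the paper's proof: reduce $V_\tau$ to a conditional call using independence of increments of $\log S$; represent that call by a Carr--Madan Fourier inversion, leaving the factor $E[S_\tau^u\mid\tau=t]$ to be computed; and compute that factor by disintegrating the conditional law of $(Y_0,X_{I(\tau)})$ given $\{\tau=t\}$ via Assumption~\ref{As2}(iii) together with the identity $I'(t)=h(t)/(\lambda^0\ovl H(t))$, which is precisely what the paper isolates as Lemma~\ref{lem:condexm}. The only cosmetic difference is a notational one in step (c): you write $y$ for $Y_\tau$ (so that $X_{I(\tau)}=y-x$), whereas the paper's kernel $p_{I(t),x}(\td y)$ labels $y=X_{\tau^X_{-x}}$ directly; the underlying computation is the same.
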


The proof relies on the following auxiliary result:

\begin{Lemma}\label{lem:condexm}
For any $u$ with $\Re(u)\in[0,\overline \theta_X)$ and $t\in[0,T]$
we have, with $\tau= \tau_0^Y$,
%
\begin{eqnarray}
\label{eq:EuX}  E \bigl[\te{u X_{I(\tau)}} |\t= t \bigr] &=& \frac{1}{\lambda^0\overline H(t)}
\mathcal L_q^{-1} \bigl(f_{u}(q)\bigr)
\bigl(I(t)\bigr),
\nonumber
\\[-8pt]
\\[-8pt]
\nonumber
f_{u}(q) &=& \int_{\mbb R^+}
\mu^X_{\lambda^0}(\td x) E \bigl[\te{u X_{\tau
^X_{-x}} - q
\tau^X_{-x}} \bigr].
\end{eqnarray}
In particular, for $u$ satisfying
in addition $\Re(u)\in[0,\overline \theta_Z\wedge\overline \theta
_X/\rho)$
we have
%
\begin{eqnarray}
\label{eq:EuS} &&E\bigl[S_\tau^{u}|\tau=t\bigr]\nonumber\\
&&\qquad =
\frac{S_0^{u}}{\lambda^0\overline
H(t)}
\\
&&\qquad\quad{}\times\exp \bigl\{ (r-d)t u - \kappa_t(-\ii)u +
\Psi_Z(-\ii u)t\bigr\}\cdot\bigl(\mathcal L_q^{-1}
f_{\rho u}(q)\bigr) (t).\nonumber
\end{eqnarray}
\end{Lemma}

The function $f_{u}(q)$ can be explicitly expressed in terms of the
Wiener--Hopf factor $\Psi^-$ of $X$ and $\mu_{\lambda^0}^X$ by
deploying the \emph{Pe{\v{c}}erski{\u\i}--Rogozin identity} [see the expression
\eqref{keyid2b} in the \hyperref[ssec:resd]{Appendix}].

\begin{pf*}{Proof of Lemma~\ref{lem:condexm}}
The spatial homogeneity of the L\'{e}vy process $X$ and the definition
of the stopping time
$\tau$ yield $P(X_{I(\tau)}\in\td y, \tau\in\td t) = \int\mu(\td x)
P(X_{\tau^X_{-x}}\in\td y, \tau^X_{-x}\in\td I(t))$. Since the CDF of
$\tau^Y_0$ is given by $H$, it follows thus
by Bayes' lemma that the conditional
expectation in the left-hand side of \eqref{eq:EuX} can be expressed as
%
\begin{equation}
\label{eq:idexc} E\bigl[\te{u X_{I(\tau)}}|\tau= t\bigr] = \frac{1}{h(t)}\int
_{\mbb R^+}\mu ^X_{\lambda^0}(\td x) \int
_{\mbb R} \te{u x} p_{I(t),x}(\td y) I'(t).
\end{equation}
The form of the derivative $I'(t)=h(t)/[\lambda^0\overline H(t)]$ then
implies that the right-hand side of \eqref{eq:idexc} and \eqref{eq:EuX}
are equal. The identity in \eqref{eq:EuS}
follows now as a direct consequence of the form of $S$ in given in
\eqref{eq:S} and the independence of $Z$ and $\tau$.
\end{pf*}

\begin{pf*}{Proof of Proposition~\ref{prop:cvacall}}
Note first that the form of $\pi$ is obtained by integrating $P_t$
against $h(t)$ over the interval $[0,T]$,
performing the change of
variables $u=I(t)$ and using the observation $I'(t)=h(t)/[\lambda
\overline H(t)]$.

The independence of the increments of $\log S$ implies
\begin{eqnarray*}
P_t &= &E\bigl[G_{\tau, S_\tau}(k)|\tau=t\bigr],\qquad
G_{t,s}(k) = s'\te{-rT}\cdot E\bigl[\bigl(
\te{L_T-L_t} - \te{k}\bigr)^+ \bigr],
\\
 s' &=& s c',\qquad  c' = \exp\bigl((r-d)
(T-t) - \kappa_T(-\ii) + \kappa _t(-\ii)\bigr),\qquad  k =
\log\bigl(K/s'\bigr).
\end{eqnarray*}
By a standard Fourier transform argument, it can be shown that
$G_{t,s}(k)$ admits an explicit integral representation
in terms of the characteristic exponents of $X$ and~$Z$.
More specifically, since the dampened function $k\mapsto\exp(\a
k)\cdot G_{t,s}(k)$ and its Fourier transform are integrable,
the Fourier inversion theorem implies
%
\begin{eqnarray}
 G_{t,s}(k) &= &\exp(-\a k) \bigl[\mathcal F_{\xi}^{-1}
\bigl(G^{\wedge}_{t,s}\bigr)\bigr](k),
\nonumber
\\[-8pt]
\\[-8pt]
\nonumber
  G^{\wedge}_{t,s}(
\xi) &=& s' \cdot D_{t,T}(1+\a+ \ii\xi),\qquad \xi\in \mbb R,
\end{eqnarray}
where $D_{t,T}(1+\a+ \ii\xi)$ is given in \eqref{eq:Dex}.
By an interchange of the expectation and integration
(justified by Fubini's theorem), we find that $P_t$, $t\in[0,T]$,
is equal to
%
\begin{eqnarray}
&& P_t = \frac{c'}{2\pi} \int_{\mbb R} E
\bigl[S_\tau^{1+\a+\ii\xi}|\tau=t\bigr]\cdot \biggl(\frac{K}{c'}
\biggr)^{-\alpha-\ii\xi}D_{t,T}(1 + \a+ \ii \xi) \,\td\xi.
\end{eqnarray}
The expression for $P_t$ in \eqref{eq:expPt} then follows by inserting
the expression in \eqref{eq:EuS} in Lemma~\ref{lem:condexm}.
\end{pf*}
%
\subsection{Extensions}
We end this section with a brief description of a number of possible
extensions and related problems in the current model setting. First, we
mention that, in addition to the case of the call option that was
considered above, it is of interest to value the counterparty risk for
other instances of commonly traded securities in foreign exchange,
fixed income, equity or commodity markets, such as swap contracts which
are contracts involving regular payments of both parties that entered
into the contract. Second, we recall that in the setting above it was
assumed that parties $A$ (the company that issued the stock) and $B$
(the bank)
were free of default risk. The case where two of three or all three
parties are subject to default is a natural extension that is
applicable in many situations. Such an extension may still be treated
in the current setting
deploying the solution of the multi-dimensional IFPT in Theorem~\ref{thm:mIFPT}.
Finally, especially of interest to financial market practitioners will
be the development of an
efficient numerical implementation of the model.
In the interest of brevity, these questions are left for future research.

\begin{appendix}\label{ssec:resd}
\section*{Appendix: Proof of quasi-invariance by residue calculus}
In this section, we provide an alternative proof of the $\lambda
$-invariance of the
probability measure $\mu_\lambda$ in the case that $X$ is a
mixed-exponential L\'{e}vy process.
We observe first (in Proposition~\ref{lem2}) that a probability
measure $\mu$ is a $\lambda$-invariant distribution of $\{X_t, t<\tau
^X_0\}$ precisely if its Laplace transform $\WH\mu$ satisfies the identity
%
\setcounter{equation}{0}
\begin{eqnarray}
\label{eq:idd2} \WH\mu(\theta)\cdot\frac{q}{q+\lambda} = \Psi^+(q,\ii\theta) \cdot
\frac{1}{2\pi\ii}\int_{a-\ii\infty}^{a+\ii\infty}\WH\mu(-u)
\Psi^-(q,- \ii u) \frac{\td u}{u + \theta},
\nonumber
\\[-8pt]
\\[-8pt]
\eqntext{ q>0.}
\end{eqnarray}
By an application of Cauchy's residue theorem,
we verify subsequently that the Laplace transform $\WH\mu_\lambda$
given in \eqref{eq:mulme} satisfies the identity in \eqref{eq:idd2} for
any fixed $q>0$.

For the ease of presentation, we restrict to the case that both the
roots $\rho$ of
the equation $\Psi(-\ii\rho)=-\lambda$
and those of the equation $\Psi(-\ii\rho)=q$
are distinct; the case of multiple roots can be dealt with by similar arguments.

\subsection{Integral identity}
We give next an expression in terms of a Bromwich-type integral
for the Laplace transform of $X(e(q))$
on the set $\{X_*(e(q)) \ge0\}$ under a given initial distribution
$\mu$
and use this to derive an integral equation satisfied by
the Laplace transform of a $\lambda$-invariant distribution.
To derive these expressions, we first express
the Laplace transform of the function $K_{\th,q}\dvtx\mbb R^+\to\mbb R$
given by
\[
K_{\th,q}(x) = E_x\bigl[\te{-\theta X\bigl(e(q)\bigr)}
\mathbf 1_{\{\tau^X_0 >
e(q)\}
}\bigr], \qquad x\in\mbb R^+,
\]
for given positive $q$ and $\theta$, in terms of the
Wiener--Hopf factors $\Psi^+$ and~$\Psi^-$.

\begin{Lemm}\label{lem:Khat} \textup{{(i)}} For $\theta, q>0$ and $x\in
\mbb
R^+$ we have
%
\begin{equation}
\label{eq:K11} K_{\theta,q}(x) = \Psi^+(q,\ii\theta) \cdot E_0
\bigl[\te{-\theta X_*\bigl(e(q)\bigr)}\mathbf 1_{\{X_*(e(q)) \ge-x\}}\bigr].
\end{equation}

\textup{{(ii)}} The Laplace transform $\WH K_{\theta,q}$ of $K_{\theta,q}$
is given by
%
\begin{equation}
\label{eq:Khat} \WH K_{\theta,q}(u) = \frac{\Psi^+(q,\ii\theta)\Psi^-(q,-\ii
u)}{\theta+u},\qquad u\in\mbb R^+.
\end{equation}
\end{Lemm}

\begin{pf}
(i) The independence of
the random variables $(X - X_*)(e(q))$ and $X_*(e(q))$ under $P_0$
(from the probabilistic form of the Wiener--Hopf factorization)
and the fact that the events $\{\tau^X_0 > e(q)\}$
and $\{X_*(e(q)) \ge0\}$ are equal $P_x$-a.s. for any nonnegative $x$
[{i.e.}, the probability $P_x(\Delta)$ of the difference $\Delta$
of these two sets is $0$]
imply that we have
\begin{eqnarray*}
\label{eq:K1}%
K_{\th,q}(x) &=& E_x
\bigl[\te{-\theta X\bigl(e(q)\bigr)}\mathbf 1_{\{\tau^X_0 >
e(q)\}}\bigr] = \te{-\theta
x} E_0\bigl[\te{-\theta X\bigl(e(q)\bigr)}\mathbf 1_{\{X_*(e(q)) \ge
-x\}
}
\bigr]
\\
&=& \te{-\theta x} E_0\bigl[\te{-\theta\bigl\{ (X-X_*)
\bigl(e(q)\bigr) + X_*\bigl(e(q)\bigr)\bigr\} }\mathbf 1_{\{X_*(e(q)) \ge-x\}}\bigr]
\\
&=& \te{-\theta x} E_0\bigl[\te{-\theta(X-X_*) \bigl(e(q)
\bigr)}\bigr]E_0\bigl[\te{-\theta X_*\bigl(e(q)\bigr)}\mathbf
1_{\{X_*(e(q)) \ge-x\}}\bigr] 
\end{eqnarray*}
for any nonnegative real $x$, which yields \eqref{eq:K11} in view of
the fact that the Laplace transform
of $(X-X_*)(e(q))$ is given by $\Psi^+(q,\ii\theta)$.

(ii) In view of \eqref{eq:K11}, the Laplace transform $\WH K_{\theta
,q}$ is equal to
\begin{eqnarray*}
\WH K_{\theta,q}(u) &=& \Psi^+(q,\ii\theta) E_0 \biggl[\int
_0^\infty\te{-(u+\theta) x}\te{-\theta X_*
\bigl(e(q)\bigr)}\mathbf 1_{\{
X_*(e(q)) \ge-x\}}\,\td x \biggr]
\\
&=& \Psi^+(q,\ii\theta) E_0 \biggl[\te{-\theta X_*\bigl(e(q)\bigr)}
\int_{-X_*(e(q))}^\infty\te{-(u+\theta) x}\,\td x \biggr]
\\
&=& \Psi^+(q,\ii\theta) \frac{1}{\theta+ u} E_0\bigl[\te{u X_*
\bigl(e(q)\bigr)}\bigr],\qquad  u\in\mbb R^+,
\end{eqnarray*}
which yields \eqref{eq:Khat} by definition of the Wiener--Hopf factor
$\Psi^-$.
\end{pf}

\begin{Propp}\label{lem2}
Let $\mu$ be a probability measure on $\mbb R^+\setminus\{0\}$
without atoms
and denote by~$\WH\mu$ its Laplace transform. Assume that
there are $c>0$, $C>0$ and $a\in\Theta^o$ satisfying $\WH\mu
(-a)<\infty
$ and
%
\begin{equation}
\label{eq:CC} \bigl|\WH\mu(-u) \bigl(1+|u|^c\bigr)\bigr| < C\qquad \mbox{for all
$u$ with $\Re(u)=a$}.
\end{equation}

\textup{{(i)}} For any $q, \theta\in\mbb R^+$, $q\neq0$, we have the identities
%
\begin{eqnarray}
\label{keyid2} &&E^{\mu}\bigl[\te{-\theta X\bigl(e(q)\bigr)}\mathbf
1_{\{X_*(e(q)) \ge0\}}\bigr]
\nonumber
\\[-8pt]
\\[-8pt]
\nonumber
&&\qquad= \Psi^+(q,\ii\theta) \cdot \frac{1}{2\pi\ii}\int
_{a-\ii\infty}^{a+\ii\infty}\WH\mu(-u) \Psi^-(q,- \ii u)
\frac{\td u}{u + \theta},
\end{eqnarray}
\begin{eqnarray}
\label{keyid2b}&&E^{\mu}\bigl[\te{-q\tau_{0}^X +
\theta(X_{\tau_0^X} - X_0)}\bigr]
\nonumber
\\[-8pt]
\\[-8pt]
\nonumber
&&\qquad= \frac{1}{2\pi\ii}\int
_{a-\ii\infty}^{a+\ii\infty}\WH\mu(-u + \theta) \biggl(1 -
\frac{\Psi^-(q,- \ii u)}{\Psi^-(q,- \ii\theta)} \biggr) \frac{\td
u}{u-\theta}.
\end{eqnarray}

\textup{{(ii)}} Let $\lambda\in(0,\lambda^*]$. The measure $\mu$ is a
$\lambda$-invariant
distribution of the process $\{X_t, t<\tau_0^X\}$
if and only if $\WH\mu$ satisfies the
collection of equations
%
\begin{eqnarray}
\label{eq:id} \WH\mu(\theta)\cdot\frac{q}{q+\lambda} = \Psi^+(q,\ii\theta) \cdot
\frac{1}{2\pi\ii}\int_{a-\ii\infty}^{a+\ii\infty}\WH\mu(-u)
\Psi^-(q,- \ii u) \frac{\td u}{u + \theta},
\nonumber
\\[-8pt]
\\[-8pt]
\eqntext{q>0.}
\end{eqnarray}
\end{Propp}

\begin{Remm}
The identity in \eqref{keyid2} is also valid if instead of \eqref{eq:CC}
we require $|q^{-1}\Psi^-(q, -\ii u)|(1+|u|^c)|<C$
uniformly over all $q>0$ and $u$ with \mbox{$\Re(u)=a$}. We note that the boundedness
of $|\Psi^-(q, - \ii u)|(1+|u|)$ over the set of $q>0$ and $u$ with
$\Re(u)=a$
is equivalent to the condition that the L\'{e}vy process $X$ creeps
downwards. This observation follows from
the fact that $X$ creeps downward precisely if the descending ladder
height process has nonzero infinitesimal drift.
\end{Remm}

\begin{pf}
It follows from \eqref{eq:K1}
that the function $x\mapsto\te{\theta x}K_{\th,q}(x)$ is
nondecreasing on $\mbb R^+$
(and has thus at most countably many points of discontinuity).
The Laplace inversion theorem yields that, at any
point of continuity $x$, $K_{\theta,q}(x)$
is equal to the integral of the right-hand side of the identity in \eqref{eq:Khat}
over the Bromwich contour $\Re(u)=a$, {that is},
%
\begin{equation}
\label{eq:KB} K_{\th,q}(x) = \Psi^+(q,\ii\theta) \cdot
\frac{1}{2\pi\ii}\int_{a-\ii\infty}^{a+\ii\infty} \te{ux} \Psi^-(q,-
\ii u) \frac{\td u}{u + \theta}.
\end{equation}
The identity in \eqref{keyid2} follows by integrating \eqref{eq:KB}
against $\mu(\td x)$ and interchanging the order of integration. This
interchange follows by an
application of Fubini's theorem which is justified in view of the
estimate
%
\begin{eqnarray}
\label{eq:Fubini} &&\int_{(0,\infty)} \int_{0-\ii\infty}^{0+\ii\infty}
\biggl\llvert \te{ux} \frac{\Psi^-(q,-\ii u)}{u+\theta}\biggr\rrvert \,\td u\,
\mu(\td x)
\nonumber
\\[-8pt]
\\[-8pt]
\nonumber
&&\qquad\leq \int
_{(0,\infty)}\mu(\td x) \cdot \int_\mbb R C
\frac{\theta+ |u|}{(u^2+\theta^2)(1+|u|^c)}\,\td u < \infty.
\end{eqnarray}
To derive this estimate, we used the bound in \eqref{eq:CC},
that $\mu$ is a probability measure and
the observations (a) $1/(u+ d) = (\bar u + d)/(|\Im(u)|^2+|\Re(u) +
d|^2)$ for any $d\in\mbb R$
and $u\in\mbb C$, with $\bar u$ denoting the complex conjugate of $u$,
and (b) $|\exp\{ux\}|=\exp\{\Re(u)x\}$
for any $x\in\mbb R$ and $u\in\mbb C$. Hence, the proof of
the identity in \eqref{keyid2} is complete.

The identity in \eqref{keyid2b} can be
proved by an analogous line of reasoning (the details of which are omitted)
by deploying the Pe{\v{c}}erski{\u\i}--Rogozin identity
%
\begin{eqnarray}
\label{eq:PR} \int_0^\infty\te{-ux}
E_x\bigl[\te{-q\tau_{0}^X +\theta
X_{\tau
_0^X}}\bigr]\,\td x = \frac{1}{u-\th} \biggl(1 - \frac{\Psi^-(q,-\ii u)}{\Psi^-(q,-\ii
\theta
)}
\biggr),
\nonumber
\\[-8pt]
\\[-8pt]
\eqntext{ u\in\mbb R^+;}
\end{eqnarray}
for a proof, see, {for example}, Sato~\cite{Sato}, Theorem~49.2, or
Alili and Kyprianou~\cite{AA}, Section~3.1, for a probabilistic proof.

(ii) The assertion follows from Definition~\ref{def:linv}
by noting that (a) the left-hand side and right-hand side of
\eqref{eq:id} are equal to the double Laplace transforms
in $(t,x)$ of the measures $m^{(1)}_t$ and $m^{(2)}_t$ on $(\mbb R^+,
\mathcal B(\mbb R^+))$
given by $m^{(1)}_t(\td x) = \exp(-\lambda t)\mu_\lambda(\td x)$
and $m^{(2)}_t(\td x) = P^{\mu_\lambda}(X_t\in\td x, t<\tau^X_0)$,
respectively [by \eqref{keyid2}] and (b) for any Borel set $A$, $m^{(1)}_t(A)$
and $m^{(2)}_t(A)$ are continuous and c\`{a}dl\`{a}g at any $t>0$,
respectively.
\end{pf}

\subsection{Residue calculus}
We next describe the form of the integrand of the Bromwich integral in
\eqref{eq:idd2} in the case of a mixed-exponential L\'{e}vy process and
$\mu=\mu_\lambda$. Since the positive Wiener--Hopf factor and the
function $\WH\mu_\lambda(\theta)$
are both rational [{cf.} \eqref{eq:mul} and \eqref{eq:Psipm}]
also the function $f\dvtx\mbb C^{+}\to\mbb C$ given by
%
\begin{equation}
\label{eq:f} f(u) = f_{\th,\lambda,q}(u) = \frac{\Psi^+(q,\ii\theta)\WH\mu
_\lambda
(-u)\Psi^-(q,-\ii u)}{u+\theta}
\end{equation}
is rational, for any triplet $(\theta,\lambda,q)$ with $\theta\in
(\unl
\th,\overline \theta)$, $\lambda\in(0,\lambda^*]$ and $q>0$.
Moreover, the
collection of poles of $f$ is finite and given
by $\mathcal P^+\cup\mathcal P^-$ with
%
\begin{eqnarray}
\label{lem2a1} \mathcal P^+& =&\bigl\{\bar\phi(-\lambda)\bigr\}\cup\bigl\{
\rho_k^+(-\lambda); k=0,\ldots, m^+\bigr\}\subset \mbb
C^{++},
\nonumber
\\[-8pt]
\\[-8pt]
\nonumber
\mathcal P^-& =& \bigl\{-\theta, \rho_j^-(q), j=0,
\ldots, m^-\bigr\}\subset \mbb C^{--},
\end{eqnarray}
where we denote $\mbb C^{--}:=\{u\in\mbb C\dvtx\Re(u)<a\}$ and
$\mbb C^{++}:=\{u\in\mbb C\dvtx\Re(u)>a\}$ where $a$ is some fixed
arbitrary number in the interval
$(0,\bar\phi(-\lambda))$.

Denote by $\mathcal C^+_T$ the contour with clockwise orientation consisting
of the segment
$\mathcal I_T = \{u\in\mbb C\dvtx\Im(u)\in[-T,T], \Re(u) = a\}$
and the semi-circle
that joins $a-\ii T$ and $a+\ii T$ such that $\mathcal C^+_T$ is contained
in the set $\{u\in\mbb C\dvtx\Re(u)\ge a\}$.
For $T$ sufficiently large, the contour $\mathcal C^+_T$ encloses
all the poles in the set $\mathcal P^+$.
Next, we evaluate the contour integral of $f$
over the curve $\mathcal C^+_T$.

\begin{figure}

\includegraphics{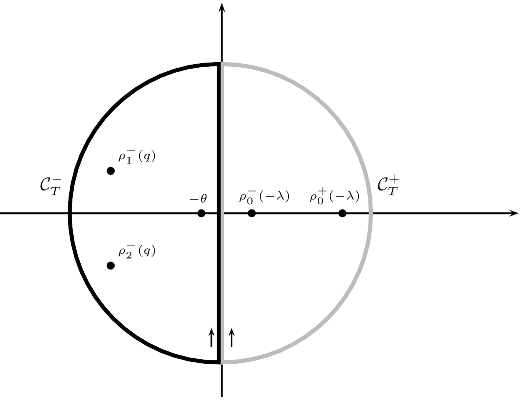}

\caption{Pictured is the complex plane with
an example of the two contours $\mathcal C_T^+$ (grey) and
$\mathcal C_T^-$ (black) and the poles in $\mathcal P^+$ and
$\mathcal P^-$.
The contour $\mathcal C_T^+$ has a clockwise orientation and encloses the
poles $p\in\mathcal P^+$
while the contour $\mathcal C_T^-$ has anti-clockwise orientation and
encloses the poles $p\in\mathcal P^-$.}\label{fig:contour}
\end{figure}

%
%

\begin{Lemm}\label{lem2a}
Assume that all the elements of the sets $\mathcal P^+$ and $\mathcal
P^-$ are distinct.
Then, for any $T>0$ sufficiently large, and any $q, \theta\in\mbb
R^+\setminus\{0\}$
and $\lambda\in(0,\lambda^*]$ we have
%
\begin{equation}
\label{eq:keyid3} I^+_o(T):= \oint_{\mathcal C_T^+} f =
\frac{q}{q+\lambda} \WH\mu _\lambda (\theta),
\end{equation}
where $f$ is given in \eqref{eq:f}. Furthermore, we have
%
\begin{equation}
\label{eq:BW} \frac{1}{2\pi\ii}\int_{a-\ii\infty}^{a+\ii\infty}f(u)
\,\td u = \frac{q}{q+\lambda} \WH\mu_\lambda(\theta),\qquad  a\in\bigl(0,\bar\phi
(-\lambda)\bigr).
\end{equation}
In particular, for any $\lambda\in(0,\lambda^*]$,
$\mu_\lambda$ satisfies the identity in \eqref{eq:idd2}.
\end{Lemm}

\begin{Remm}
By arguments that are analogous, the ones
given below in the proof of Lemma~\ref{lem2a}, it may be verified that
the identity in \eqref{eq:keyid3} remains valid if
one replaces $\mathcal C_T^+$ by the contour $\mathcal C^-_T$
consisting of the segment $\mathcal I_T$ and the semi-circle
that joins $a-\ii T$ and $a+\ii T$ such that $\mathcal C^-_T$ is contained
in the set $\{u\in\mbb C\dvtx\Re(u)\leq a\}$
(see Figure~\ref{fig:contour}).
\end{Remm}
\begin{pf*}{Proof of Lemma~\ref{lem2a}}
By Cauchy's residue theorem,\vspace*{1pt}
the integral $I^+_o(T)$ of the function $f$ over the curve
$\mathcal C^+_T$ is for all $T$ sufficiently large equal to
%
\begin{equation}
\label{eq:IoT} I^+_o(T) = \frac{1}{2\pi\ii} \sum
_{p\in\mathcal P^+} n\bigl(\mathcal C_T^+,p\bigr)\mathrm
{Res}_p(f),
\end{equation}
where $\mathrm {Res}_p(f)$ denotes the residue of the function $f$ at the
pole $p$
and, for any $p\in\mbb C$ and any curve $\Gamma\dvtx[0,2\pi]\to
\mbb C$,
$n(\Gamma, p)$ denotes the winding number of $\Gamma$ around $p$. Note
that we have $n(\mathcal C_T^+,p)= -1$ for any $p\in\mathcal P^+$ (see
Figure~\ref{fig:contour}).

Since by assumption the poles are all distinct,
the residues at the poles $p\in\mathcal P^+$ satisfy
%
\begin{equation}
\label{eq:resf} \mathrm {Res}_p(f) = 2\pi\ii\cdot\lim
_{s\to p} (s-p) f(s), \qquad p\in \mathcal P^+.
\end{equation}
Inserting the explicit form of $f$ into \eqref{eq:resf} we find by
straightforward
algebra
%
\begin{equation}
\label{eq:Res} \Psi^+(q,\ii\theta)^{-1} \frac{\mathrm {Res}_{p}(f)}{2\pi\ii} = -
A^+(p)\cdot\frac{p}{p + \theta}, \qquad p\in\mathcal P^+,
\end{equation}
with
\begin{equation}
A^+(p) = \Psi^-(q,-\ii p) \frac{\bar\phi(-\lambda)}
{\bar\phi(-\lambda)-p}\frac{\prod_{k=1}^{m^+}
( 1 - {p}/{\alpha^+_k}  )}%
{\prod_{k=0, k\neq j}^{m^+}  (1 -{p}/{(\rho_k^+(-\lambda
))}  )}.
\label{eq:Ap}
\end{equation}
By using these explicit expressions,
we next verify that the following key-identity holds true:
%
\begin{equation}
\label{eq:resid} \frac{1}{2\pi\ii} \sum_{p\in\mathcal P^+} (-1)
\cdot\frac
{\mathrm
{Res}_p(f)}{\Psi
^+(q,\ii\theta)} = R(\theta) := \frac{q}{q+\lambda} \frac{\WH\mu_\lambda(\theta)}{\Psi
^+(q,\ii\theta)}.
\end{equation}
This identity follows from \eqref{eq:Res}
and the following partial-fraction decomposition of $R(\theta)$:
%
\begin{eqnarray}
\label{eq:R} &&R(\theta) = \frac{q}{q+\lambda} \Biggl[\sum
_{j=0}^{m^+} A^+\bigl(\rho_j^+(-\lambda)
\bigr) \frac{\rho
_j^+(-\lambda
)}{\rho_j^+(-\lambda) + \theta}
\nonumber
\\[-8pt]
\\[-8pt]
\nonumber
&&\hspace*{87pt}{}+ A^+\bigl(\bar\phi(-\lambda)\bigr)\frac
{\bar\phi
(-\lambda)}{\bar\phi(-\lambda) + \theta}
\Biggr],
\end{eqnarray}
where the coefficients $A^+(\bar\phi(-\lambda))$ and $A^+(\rho
_j^+(-\lambda))$, $j=0,\ldots, m^+$
are given by~\eqref{eq:Ap}.

We next show in two steps that \eqref{eq:R} holds.

(a) As a first step, we record the relation
%
\begin{equation}\qquad
\label{eq:PPP} \Psi^-\bigl(q,-\ii\rho_j^+(-\lambda)\bigr) =
\frac{q}{q+\lambda}\Psi ^+\bigl(q,-\ii\rho _j^+(-\lambda)
\bigr)^{-1},\qquad \lambda\in(0,\lambda^*].
\end{equation}
To see why this holds true, note that, for any $q>0$ and $\lambda\in
(0,\lambda^*]$,
it follows by analytical extension
that the Wiener--Hopf identities in \eqref{eq:WH}
remains valid for any $\theta\in\mbb C$ except some
finite set [namely, the sets of roots $\rho$ of the equation
$\Psi(\rho)=q$]. Substituting $\theta\to-\ii p$ $(p\in\mathcal
P^+)$ in
\eqref{eq:WH}
and using that by definition $\Psi(-\ii\rho^+_j(-\lambda))=-\lambda
$ we obtain
the relation \eqref{eq:PPP}.

(b) Inserting the explicit forms of
$\Psi^+(q,\ii\theta)$ and $\WH\mu_\lambda(\theta)$ [given in
\eqref
{eq:Psipm} and~\eqref{eq:mulme}]
into \eqref{eq:resid},
we find
\[
R(\theta) = \frac{q}{q+\lambda} \cdot \frac{\bar\phi(-\lambda)}
{\bar\phi(-\lambda)+\theta} \prod
_{k=0}^{m^+} \frac{1 + {\theta}/{(\rho^+_k(q))}}%
{1 + {\theta}/{(\rho_k^+(-\lambda))}}.
\]
It is a matter of algebra to verify that $R(\theta)$ admits a
partial-fraction decomposition of the form \eqref{eq:R}
for some coefficients $A^+(\bar\phi(-\lambda))$ and $A^+(\rho
_j^+(-\lambda))$, $j=0,\ldots, m^+$.
Furthermore, by deploying the identity on the left-hand side of \eqref{eq:PPP},
it is easy to show that these coefficients are equal
to the expression given in \eqref{eq:Res}, so that \eqref{eq:R} is
established.

Combining \eqref{eq:IoT} and \eqref{eq:resid} shows that for all $T$
sufficiently large,
we have
\[
I^+_o(T) = \frac{q}{q+\lambda}\WH\mu_\lambda(\theta).
\]
Finally, we note that the integral $I^+_c(T)$
over the semi-circles only (i.e., over $\mathcal C_T^+\setminus
\mathcal I_T$)
tends to zero as $T\to\infty$, since the length of the semi-circles
$\mathcal C^+_c(T)$
is proportional to $T$
while we have the bound $\max_{u\in\mathcal C^+_c(T)}|f(u)|\leq C^+/T^2$
for some constant $C^+>0$. Thus, we conclude that $I^+_o(T)$ converges
to the right-hand side
of \eqref{eq:BW} as $T$ tends to infinity, and the proof is complete.
\end{pf*}
\end{appendix}
\section*{Acknowledgements}
We thank two anonymous referees for their careful reading of the paper
and very constructive comments.
%





\printaddresses

\begin{thebibliography}{38}

\bibitem{AA}
%
\begin{barticle}[mr]
\bauthor{\bsnm{Alili},~\bfnm{L.}\binits{L.}} \AND
\bauthor{\bsnm{Kyprianou},~\bfnm{A.~E.}\binits{A.~E.}}
(\byear{2005}).
\btitle{Some remarks on first passage of L\'evy processes, the
{A}merican put and pasting principles}.
\bjournal{Ann. Appl. Probab.}
\bvolume{15}
\bpages{2062--2080}.
\bid{doi={10.1214/105051605000000377}, issn={1050-5164}, mr={2152253}}
\end{barticle}
%

\bptok{imsref}%
\endbibitem

\bibitem{Apple}
%
\begin{bbook}[mr]
\bauthor{\bsnm{Applebaum},~\bfnm{David}\binits{D.}}
(\byear{2004}).
\btitle{L\'evy Processes and Stochastic Calculus}.
\bseries{Cambridge Studies in Advanced Mathematics}
\bvolume{93}.
\bpublisher{Cambridge Univ. Press},
\blocation{Cambridge}.
\bid{doi={10.1017/CBO9780511755323}, mr={2072890}}
\end{bbook}
%

\bptok{imsref}%
\endbibitem

\bibitem{AZ}
%
\begin{barticle}[auto:STB|2014/08/04|07:23:14]
\bauthor{\bsnm{Avellaneda},~\bfnm{M.}\binits{M.}} \AND
\bauthor{\bsnm{Zhu},~\bfnm{J.}\binits{J.}}
(\byear{2001}).
\btitle{Modeling the distance-to-default of a firm}.
\bjournal{Risk}
\bvolume{14}
\bpages{125--129}.
\end{barticle}
%

\bptok{imsref}%
\endbibitem

\bibitem{Bartholomew1969}
%
\begin{barticle}[auto:STB|2014/08/04|07:23:14]
\bauthor{\bsnm{Bartholomew},~\bfnm{D.~J.}\binits{D.~J.}}
(\byear{1969}).
\btitle{Sufficient conditions for a mixture of exponentials to be a
probability density function}.
\bjournal{Annals of Mathematical Statistics}
\bvolume{40}
\bpages{2183--2188}.
\end{barticle}
%

\bptok{imsref}%
\endbibitem

\bibitem{Bertoin}
%
\begin{bbook}[mr]
\bauthor{\bsnm{Bertoin},~\bfnm{Jean}\binits{J.}}
(\byear{1996}).
\btitle{L\'evy Processes}.
\bseries{Cambridge Tracts in Mathematics}
\bvolume{121}.
\bpublisher{Cambridge Univ. Press},
\blocation{Cambridge}.
\bid{mr={1406564}}
\end{bbook}
%

\bptok{imsref}%
\endbibitem

\bibitem{BC}
%
\begin{barticle}[auto:STB|2014/08/04|07:23:14]
\bauthor{\bsnm{Black},~\bfnm{F.}\binits{F.}} \AND
\bauthor{\bsnm{Cox},~\bfnm{J.~C.}\binits{J.~C.}}
(\byear{1976}).
\btitle{Valuing corporate securities: Some effects of bond indenture
provisions}.
\bjournal{J. Finance}
\bvolume{31}
\bpages{315--376}.
\end{barticle}
%

\bptok{imsref}%
\endbibitem

\bibitem{BP}
%
\begin{bmisc}[auto:STB|2014/08/04|07:23:14]
\bauthor{\bsnm{Blanchet-Scalliet},~\bfnm{C.}\binits{C.}} \AND
\bauthor{\bsnm{Patras},~\bfnm{F.}\binits{F.}}
(\byear{2008}).
\bhowpublished{Counterparty risk valuation for CDS.
Available at \arxivurl{arXiv:0807.0309}}.
\end{bmisc}
%

\bptok{imsref}%
\endbibitem

\bibitem{BH}
%
\begin{barticle}[mr]
\bauthor{\bsnm{Botta},~\bfnm{R.~F.}\binits{R.~F.}} \AND
\bauthor{\bsnm{Harris},~\bfnm{C.~M.}\binits{C.~M.}}
(\byear{1986}).
\btitle{Approximation with generalized hyperexponential distributions:
Weak convergence results}.
\bjournal{Queueing Systems Theory Appl.}
\bvolume{1}
\bpages{169--190}.
\bid{doi={10.1007/BF01536187}, issn={0257-0130}, mr={0896234}}
\end{barticle}
%

\bptok{imsref}%
\endbibitem

%

\bibitem{BT}
%
\begin{bmisc}[auto:STB|2014/08/04|07:23:14]
\bauthor{\bsnm{Brigo},~\bfnm{D.}\binits{D.}} \AND
\bauthor{\bsnm{Tarenghi},~\bfnm{M.}\binits{M.}}
(\byear{2005}).
\bhowpublished{Credit default swap calibration
and counterparty risk valuation with a scenario based first
passage model. Available at \surl{SSRN.com/abstract=683160}.}
\end{bmisc}
%

\bptok{imsref}%
\endbibitem

\bibitem{Cesari}
%
\begin{bbook}[mr]
\bauthor{\bsnm{Cesari},~\bfnm{Giovanni}\binits{G.}},
\bauthor{\bsnm{Aquilina},~\bfnm{John}\binits{J.}},
\bauthor{\bsnm{Charpillon},~\bfnm{Niels}\binits{N.}},
\bauthor{\bsnm{Filipovi{\'c}},~\bfnm{Zlatko}\binits{Z.}},
\bauthor{\bsnm{Lee},~\bfnm{Gordon}\binits{G.}} \AND
\bauthor{\bsnm{Manda},~\bfnm{Ion}\binits{I.}}
(\byear{2009}).
\btitle{Modelling, Pricing, and Hedging Counterparty Credit Exposure:
A Technical Guide}.
\bpublisher{Springer},
\blocation{Berlin}.
\bid{doi={10.1007/978-3-642-04454-0}, mr={2599060}}
\end{bbook}
%

\bptok{imsref}%
\endbibitem

\bibitem{Chadam}
%
\begin{barticle}[mr]
\bauthor{\bsnm{Chen},~\bfnm{Xinfu}\binits{X.}},
\bauthor{\bsnm{Cheng},~\bfnm{Lan}\binits{L.}},
\bauthor{\bsnm{Chadam},~\bfnm{John}\binits{J.}} \AND
\bauthor{\bsnm{Saunders},~\bfnm{David}\binits{D.}}
(\byear{2011}).
\btitle{Existence and uniqueness of solutions to the inverse boundary
crossing problem for diffusions}.
\bjournal{Ann. Appl. Probab.}
\bvolume{21}
\bpages{1663--1693}.
\bid{doi={10.1214/10-AAP714}, issn={1050-5164}, mr={2884048}}
\end{barticle}
%

\bptok{imsref}%
\endbibitem

\bibitem{Chadam2}
%
\begin{barticle}[mr]
\bauthor{\bsnm{Cheng},~\bfnm{Lan}\binits{L.}},
\bauthor{\bsnm{Chen},~\bfnm{Xinfu}\binits{X.}},
\bauthor{\bsnm{Chadam},~\bfnm{John}\binits{J.}} \AND
\bauthor{\bsnm{Saunders},~\bfnm{David}\binits{D.}}
(\byear{2006}).
\btitle{Analysis of an inverse first passage problem from risk management}.
\bjournal{SIAM J. Math. Anal.}
\bvolume{38}
\bpages{845--873}.
\bid{doi={10.1137/050622651}, issn={0036-1410}, mr={2262945}}
\end{barticle}
%

\bptok{imsref}%
\endbibitem

\bibitem{CT}
%
\begin{bbook}[mr]
\bauthor{\bsnm{Cont},~\bfnm{Rama}\binits{R.}} \AND
\bauthor{\bsnm{Tankov},~\bfnm{Peter}\binits{P.}}
(\byear{2004}).
\btitle{Financial Modelling with Jump Processes}.
\bpublisher{Chapman \& Hall/CRC},
\blocation{Boca Raton, FL}.
\bid{mr={2042661}}
\end{bbook}
%

\bptok{imsref}%
\endbibitem

\bibitem{DP}
%
\begin{bmisc}[auto:STB|2014/08/04|07:23:14]
\bauthor{\bsnm{Davis},~\bfnm{M.~H.~A.}\binits{M.~H.~A.}} \AND
\bauthor{\bsnm{Pistorius},~\bfnm{M.~R.}\binits{M.~R.}}
(\byear{2010}).
\bhowpublished{Quantification of counterparty risk via Bessel
bridges. Available at \surl{SSRN.com/abstract=1722604}.}
\end{bmisc}
%

\bptok{imsref}%
\endbibitem

\bibitem{EEH}
%
\begin{barticle}[mr]
\bauthor{\bsnm{Ettinger},~\bfnm{Boris}\binits{B.}},
\bauthor{\bsnm{Evans},~\bfnm{Steven~N.}\binits{S.~N.}} \AND
\bauthor{\bsnm{Hening},~\bfnm{Alexandru}\binits{A.}}
(\byear{2014}).
\btitle{Killed {B}rownian motion with a prescribed lifetime
distribution and models of default}.
\bjournal{Ann. Appl. Probab.}
\bvolume{24}
\bpages{1--33}.
\bid{doi={10.1214/12-AAP902}, issn={1050-5164}, mr={3161639}}
\end{barticle}
%

\bptok{imsref}%
\endbibitem

\bibitem{Feller}
%
\begin{bbook}[mr]
\bauthor{\bsnm{Feller},~\bfnm{William}\binits{W.}}
(\byear{1971}).
\btitle{An Introduction to Probability Theory and Its Applications.
{V}ol. II,}
\bedition{2nd}~ed.
\bpublisher{Wiley},
\blocation{New York}.
\bid{mr={0270403}}
\end{bbook}
%

\bptok{imsref}%
\endbibitem

\bibitem{HT}
%
\begin{barticle}[auto:STB|2014/08/04|07:23:14]
\bauthor{\bsnm{Huang},~\bfnm{H.}\binits{H.}} \AND
\bauthor{\bsnm{Tian},~\bfnm{W.}\binits{W.}}
(\byear{2006}).
\btitle{Constructing default boundaries}.
\bjournal{Bankers, Markets \& Investors}
\bvolume{80}
\bpages{21--28}.
\end{barticle}
%

\bptok{imsref}%
\endbibitem

\bibitem{HW}
%
\begin{barticle}[auto:STB|2014/08/04|07:23:14]
\bauthor{\bsnm{Hull},~\bfnm{J.}\binits{J.}} \AND
\bauthor{\bsnm{White},~\bfnm{A.}\binits{A.}}
(\byear{2001}).
\btitle{Valuing credit default swaps II: Default correlations}.
\bjournal{Journal of Derivatives}
\bvolume{8}
\bpages{12--22}.
\end{barticle}
%

\bptok{imsref}%
\endbibitem

%

\bibitem{JS}
%
\begin{bbook}[mr]
\bauthor{\bsnm{Jacod},~\bfnm{Jean}\binits{J.}} \AND
\bauthor{\bsnm{Shiryaev},~\bfnm{Albert~N.}\binits{A.~N.}}
(\byear{1987}).
\btitle{Limit Theorems for Stochastic Processes}.
\bseries{Grundlehren der Mathematischen Wissenschaften}
\bvolume{288}.
\bpublisher{Springer},
\blocation{Berlin}.
\bid{doi={10.1007/978-3-662-02514-7}, mr={0959133}}
\end{bbook}
%

\bptok{imsref}%
\endbibitem

\bibitem{Seb}
%
\begin{bmisc}[auto]
\bauthor{\bsnm{Jaimungal},~\bfnm{S.}\binits{S.}},
\bauthor{\bsnm{Kreinin},~\bfnm{A.}\binits{A.}}
\and
\bauthor{\bsnm{Valov},~\bfnm{A.}\binits{A.}}
(\byear{2009}).
\bhowpublished{Integral equations and the first passage time
of Brownian motions.
Available at \arxivurl{arXiv:0902.2569}.}
\end{bmisc}
%

\bptok{imsref}%
\endbibitem

\bibitem{JT}
%
\begin{barticle}[auto:STB|2014/08/04|07:23:14]
\bauthor{\bsnm{Jarrow},~\bfnm{R.~A.}\binits{R.~A.}} \AND
\bauthor{\bsnm{Turnbull},~\bfnm{S.}\binits{S.}}
(\byear{1995}).
\btitle{Pricing derivatives on financial securities subject to credit risk}.
\bjournal{J. Finance}
\bvolume{50}
\bpages{53--85}.
\end{barticle}
%

\bptok{imsref}%
\endbibitem

\bibitem{JSt}
%
\begin{barticle}[auto:STB|2014/08/04|07:23:14]
\bauthor{\bsnm{Johnson},~\bfnm{H.}\binits{H.}} \AND
\bauthor{\bsnm{Stultz},~\bfnm{R.}\binits{R.}}
(\byear{1987}).
\btitle{The pricing of options with default risk}.
\bjournal{J. Finance}
\bvolume{42}
\bpages{267--280}.
\end{barticle}
%

\bptok{imsref}%
\endbibitem

\bibitem{KP}
%
\begin{bbook}[mr]
\bauthor{\bsnm{Kalbfleisch},~\bfnm{John~D.}\binits{J.~D.}} \AND
\bauthor{\bsnm{Prentice},~\bfnm{Ross~L.}\binits{R.~L.}}
(\byear{2002}).
\btitle{The Statistical Analysis of Failure Time Data},
\bedition{2nd} ed.
\bpublisher{Wiley},
\blocation{Hoboken, NJ}.
\bid{doi={10.1002/9781118032985}, mr={1924807}}
\end{bbook}
%

\bptok{imsref}%
\endbibitem

\bibitem{Kuzn}
%
\begin{barticle}[mr]
\bauthor{\bsnm{Kuznetsov},~\bfnm{A.~E.}\binits{A.~E.}}
(\byear{2010}).
\btitle{An analytical proof of the Pe{\v{c}}erski{\u\i}--{R}ogozin identity
and the {W}iener--{H}opf factorization}.
\bjournal{Theory Probab. Appl.}
\bvolume{55}
\bpages{432--443}.
\end{barticle}
%

\bptok{imsref}%
\endbibitem

\bibitem{Kyprianou}
%
\begin{bbook}[mr]
\bauthor{\bsnm{Kyprianou},~\bfnm{Andreas~E.}\binits{A.~E.}}
(\byear{2006}).
\btitle{Introductory Lectures on Fluctuations of L\'evy Processes with
Applications}.
\bpublisher{Springer},
\blocation{Berlin}.
\bid{mr={2250061}}
\end{bbook}
%

\bptok{imsref}%
\endbibitem

\bibitem{LM}
%
\begin{barticle}[mr]
\bauthor{\bsnm{Lewis},~\bfnm{Alan~L.}\binits{A.~L.}} \AND
\bauthor{\bsnm{Mordecki},~\bfnm{Ernesto}\binits{E.}}
(\byear{2008}).
\btitle{Wiener--{H}opf factorization for L\'evy processes having
positive jumps with rational transforms}.
\bjournal{J. Appl. Probab.}
\bvolume{45}
\bpages{118--134}.
\bid{doi={10.1239/jap/1208358956}, issn={0021-9002}, mr={2409315}}
\end{barticle}
%

\bptok{imsref}%
\endbibitem

\bibitem{LS2}
%
\begin{bmisc}[auto:STB|2014/08/04|07:23:14]
\bauthor{\bsnm{Lipton},~\bfnm{A.}\binits{A.}} \AND
\bauthor{\bsnm{Savescu},~\bfnm{I.}\binits{I.}}
(\byear{2013}).
\bhowpublished{CDSs, CVA and DVA---a structural approach.
\textit{Risk} \textbf{April 2013} 56--61}.
\end{bmisc}
%

\bptok{imsref}%
\endbibitem

\bibitem{LS}
%
\begin{barticle}[auto:STB|2014/08/04|07:23:14]
\bauthor{\bsnm{Lipton},~\bfnm{A.}\binits{A.}} \AND
\bauthor{\bsnm{Sepp},~\bfnm{A.}\binits{A.}}
(\byear{2009}).
\btitle{Credit value adjustment for credit default swaps via the
structural default model}.
\bjournal{Journal of Credit Risk}
\bvolume{5}
\bpages{123--146}.
\end{barticle}
%

\bptok{imsref}%
\endbibitem

\bibitem{LonSch95}
%
\begin{barticle}[auto:STB|2014/08/04|07:23:14]
\bauthor{\bsnm{Longstaff},~\bfnm{F.}\binits{F.}} \AND
\bauthor{\bsnm{Schwartz},~\bfnm{E.}\binits{E.}}
(\byear{1995}).
\btitle{A simple approach to valuing risky fixed and floating rate debt}.
\bjournal{J. Finance}
\bvolume{50}
\bpages{789--819}.
\end{barticle}
%

\bptok{imsref}%
\endbibitem

\bibitem{Mer74}
%
\begin{barticle}[auto:STB|2014/08/04|07:23:14]
\bauthor{\bsnm{Merton},~\bfnm{R.~C.}\binits{R.~C.}}
(\byear{1974}).
\btitle{On the pricing of corporate debt: The risk structure of
interest rates}.
\bjournal{J. Finance}
\bvolume{29}
\bpages{449--470}.
\end{barticle}
%

\bptok{imsref}%
\endbibitem

\bibitem{OS}
%
\begin{barticle}[auto:STB|2014/08/04|07:23:14]
\bauthor{\bsnm{Overbeck},~\bfnm{L.}\binits{L.}} \AND
\bauthor{\bsnm{Schmidt},~\bfnm{W.}\binits{W.}}
(\byear{2005}).
\btitle{Modeling default dependence with threshold models}.
\bjournal{Journal of Derivatives}
\bvolume{12}
\bpages{10--19}.
\end{barticle}
%

\bptok{imsref}%
\endbibitem

\bibitem{PR}
%
\begin{barticle}[mr]
\bauthor{\bsnm{Pe{\v{c}}erski{\u\i}},~\bfnm{E.~A.}\binits
{E.~A.}} \AND
\bauthor{\bsnm{Rogozin},~\bfnm{B.~A.}\binits{B.~A.}}
(\byear{1969}).
\btitle{The combined distributions of the random variables connected
with the fluctuations of a process with independent increments}.
\bjournal{Teor. Veroyatn. Primen.}
\bvolume{14}
\bpages{431--444}.
\bid{issn={0040-361X}, mr={0260005}}
\end{barticle}
%

\bptok{imsref}%
\endbibitem

\bibitem{Peskir2}
%
\begin{barticle}[mr]
\bauthor{\bsnm{Peskir},~\bfnm{Goran}\binits{G.}}
(\byear{2002}).
\btitle{On integral equations arising in the first-passage problem for
{B}rownian motion}.
\bjournal{J. Integral Equations Appl.}
\bvolume{14}
\bpages{397--423}.
\bid{doi={10.1216/jiea/1181074930}, issn={0897-3962}, mr={1984752}}
\end{barticle}
%

\bptok{imsref}%
\endbibitem

\bibitem{Sato}
%
\begin{bbook}[mr]
\bauthor{\bsnm{Sato},~\bfnm{Ken-iti}\binits{K.-i.}}
(\byear{1999}).
\btitle{L\'evy Processes and Infinitely Divisible Distributions}.
\bseries{Cambridge Studies in Advanced Mathematics}
\bvolume{68}.
\bpublisher{Cambridge Univ. Press},
\blocation{Cambridge}.
\bid{mr={1739520}}
\end{bbook}
%

\bptok{imsref}%
\endbibitem

\bibitem{SongZipkin}
%
\begin{barticle}[mr]
\bauthor{\bsnm{Song},~\bfnm{Jing-Sheng}\binits{J.-S.}} \AND
\bauthor{\bsnm{Zipkin},~\bfnm{Paul}\binits{P.}}
(\byear{2011}).
\btitle{An approximation for the inverse first passage time problem}.
\bjournal{Adv. in Appl. Probab.}
\bvolume{43}
\bpages{264--275}.
\bid{doi={10.1239/aap/1300198522}, issn={0001-8678}, mr={2761157}}
\end{barticle}
%

\bptok{imsref}%
\endbibitem

\bibitem{ZucSac}
%
\begin{barticle}[mr]
\bauthor{\bsnm{Zucca},~\bfnm{Cristina}\binits{C.}} \AND
\bauthor{\bsnm{Sacerdote},~\bfnm{Laura}\binits{L.}}
(\byear{2009}).
\btitle{On the inverse first-passage-time problem for a {W}iener process}.
\bjournal{Ann. Appl. Probab.}
\bvolume{19}
\bpages{1319--1346}.
\bid{doi={10.1214/08-AAP571}, issn={1050-5164}, mr={2538072}}
\end{barticle}
%

\bptok{imsref}%
\endbibitem

\end{thebibliography}
\end{document}